	\def\@cite#1#2{[\textbf{#1}\if@tempswa , #2\fi]}
	\def\@biblabel#1{[#1]}								
\newcommand\RedeclareMathOperator{%
  \@ifstar{\def\rmo@s{m}\rmo@redeclare}{\def\rmo@s{o}\rmo@redeclare}%
}
\newcommand\rmo@redeclare[2]{%
  \begingroup \escapechar\m@ne\xdef\@gtempa{{\string#1}}\endgroup
  \expandafter\@ifundefined\@gtempa
     {\@latex@error{\noexpand#1undefined}\@ehc}%
     \relax
  \expandafter\rmo@declmathop\rmo@s{#1}{#2}}
\newcommand\rmo@declmathop[3]{%
  \DeclareRobustCommand{#2}{\qopname\newmcodes@#1{#3}}%
}
\DeclareMathOperator{\R}{\mathbb{R}}
\DeclareMathOperator{\dist}{dist}
\DeclareMathOperator{\rdist}{r-dist}
\DeclareMathOperator{\conv}{conv}
\DeclareMathOperator{\ldel}{ldel}
\DeclareMathOperator{\ldiv}{ldiv}
\DeclareMathOperator{\del}{del} 
\RedeclareMathOperator{\div}{div}
\DeclareMathOperator{\Gr}{\operatorname{Gr}}
\newtheorem{theorem}{Theorem}
\newtheorem{corollary}[theorem]{Corollary}
\newtheorem{lemma}[theorem]{Lemma}
\newtheorem{example}[theorem]{Example}
\newtheorem{remark}{Remark}[section]
\newcommand{\N}{\mathbb{N}}
\newcommand{\NN}{N^{-\frac 2{n-1}}}
\newcommand{\Sp}{\mathbb{S}^{n-1}}
\renewcommand{\S}{\mathbb{S}}
\newcommand{\E}{{\mathbb{E}}}
\newcommand{\Pro}{\mathbb{P}}
\newcommand{\cP}{\mathscr{P}} 
\newcommand{\cC}{\mathscr{C}} 
\newcommand{\cK}{\mathcal{K}}
\newcommand{\dV}{\widetilde{V}}
\newcommand{\dW}{\widetilde{W}}
\renewcommand{\phi}{\varphi}
\newcommand\blfootnote[1]{%
  \begingroup
  \renewcommand\thefootnote{}\footnote{#1}%
  \addtocounter{footnote}{-1}%
  \endgroup
}
\title{Intrinsic and dual volume deviations of convex bodies and polytopes}
\author{Florian Besau, Steven Hoehner and Gil Kur}
\date{\today}
\begin{document}

\setcounter{footnote}{0}
\maketitle
\begin{abstract}
We establish estimates for the asymptotic best approximation of the Euclidean unit ball by polytopes under a notion of distance induced by the intrinsic volumes.
We also introduce a notion of distance between convex bodies that is induced by the Wills functional, and apply it to derive asymptotically sharp bounds for approximating the  ball in high dimensions. 
Remarkably, it turns out that there is a polytope which is almost optimal with respect to all intrinsic volumes simultaneously, up to  absolute constants. 

Finally, we establish asymptotic formulas for the best approximation of smooth convex bodies by polytopes with respect to a distance induced by dual volumes, which originate from Lutwak's dual Brunn--Minkowski theory.
\end{abstract}


\blfootnote{2010 \emph{Mathematics Subject Classification}: 52A20, 52A27 (52A22, 52A39, 52B05, 52B11)}
\blfootnote{\emph{Key words and phrases}: polytopal approximation, intrinsic volume, quermassintegral, dual volume, Wills functional}


\section{Introduction}

The approximation of convex bodies by polytopes is a classical topic in geometry with an extensive history; we  refer the interested reader to, e.g.,  the  surveys \cite{Barany:2007, Bronshtein:2008, Hug:2013} and the monograph \cite{Gruber:2007} for a proper treatment of the subject.

In this article, we focus on the asymptotic best approximation of the Euclidean unit ball $D_n\subset \R^n$. A natural question dating back to Fejes T\'oth \cite[Ch.\ 5.5]{FejesToth:1953} is: How well can we approximate the volume of the Euclidean unit ball $D_3\subset \R^3$ by an inscribed polytope with $N$ vertices?
Gruber \cite{Gruber:1988,Gruber:1991} answered this question asymptotically not only for the ball, but for all smooth convex bodies in all dimensions $n\geq 2$ (see also \cite{Boroczky:2000a} for generalizations to less smooth bodies and \cite{GMR:1994,GMR:1995} for constructions). For example, if we fix $N\in \N$ and consider the set $\cP^i_N$ of all polytopes that are contained in $D_n$ and have at most $N$ vertices,
then Gruber's result, together with asymptotic results for the dimensional constants obtained by Gordon, Reisner and Schütt \cite{GRS:1997} and improved by Mankiewicz and Schütt \cite{MankiewiczSchutt:2000, MankiewiczSchutt:2001}, imply
\begin{equation}\label{eqn:volume_best_approx_inside}
	\lim_{N\to \infty} N^{\frac{2}{n-1}} \min_{P\in \cP_N^i}  |D_n \setminus P|
	= \frac{n |D_n|}{2}\, \left(1+O\left(\frac{\ln n}{n}\right)\right).
\end{equation}
Here and throughout the paper, $|K|$ denotes the $n$-dimensional volume of a compact set $K$ in $\R^n$, and we use the asymptotic notation $O(g(n))$ to indicate that the  sequence $(f_n)_{n\in\N}$ grows at most at the rate $g(n)$ does, i.e., $\limsup_{n\to\infty} \left|f_n/g(n)\right| < \infty$. The exact value of the dimensional constant on the right-hand side of \eqref{eqn:volume_best_approx_inside} is only known for $n\in\{2,3\}$. 
Results similar to \eqref{eqn:volume_best_approx_inside} have been obtained for the mean width and volume functionals with a positive continuous weight function \cite{GlasauerGruber:1997, Ludwig:1999}. More recently, results of a similar nature were obtained for convex bodies in spaces of constant curvature, such as, for example, the unit sphere \cite{BLW:2018, Fodor:2019}.
In addition, results on volume best approximation have recently found applications in statistical machine learning theory \cite{CDSS:2018,DaganKur:2019, DKS:2017,DKS:2019, Kur:2017}.

Remarkably, as noted by Schütt and Werner \cite[Sec.\ 1.5]{SchuttWerner:2003}, when comparing \eqref{eqn:volume_best_approx_inside} with the expected volume difference of a random polytope $P_N$ generated as the convex hull of $N$ independent random points chosen uniformly from the unit sphere $\Sp := \partial D_n$, one finds
\begin{equation}\label{eqn:volume_rand_approx_inside}
	\lim_{N\to\infty} N^{\frac{2}{n-1}}\, \E |D_n \setminus P_N| = \frac{n|D_n|}{2}\, \left(1+O\left(\frac{\ln n}{n}\right)\right)
\end{equation}
(see also \cite{Affentranger:1991, BuchtaMuller:1984, Muller:1990} and Theorem \ref{affentranger:thm} below). 
Observe that the difference between the dimensional constants in the right-hand sides of \eqref{eqn:volume_best_approx_inside} and \eqref{eqn:volume_rand_approx_inside} is of the order $\frac{\ln n}{n}$ and therefore vanishes as $n\to \infty$.
Moreover, the dimensional constant in \eqref{eqn:volume_best_approx_inside} is bounded above by the dimensional constant in \eqref{eqn:volume_rand_approx_inside}, which can be calculated explicitly for all $n\geq 2$ (see \eqref{eqn:random_constants} below).
If one replaces  volume by  surface area or mean width, then similar observations can be drawn by comparing results on best approximation \cite{BoroczkyCsikos:2009, BoroczkyLudwig:1999, HSW:2018, Ludwig:1999, LSW:2006,  Schneider:1987} and on random approximation \cite{BFV:2010, BaranyThale:2017, BFH:2013, BHH:2008, BoroczkyReitzner:2004, Reitzner:2002, Reitzner:2003, SchuttWerner:2003, Thale:2018, TTW:2018}.

\medskip
In this paper, we are interested in comparing the random and best approximation of the Euclidean unit ball with respect to the intrinsic volumes, which are also known as quermassintegrals.
More precisely, the intrinsic volumes $V_j$ for $j\in \{0,1,\dotsc,n\}$ are implicitly defined for any convex body $K\subset \R^n$ via Steiner's formula 
\begin{equation*}
	|K+rD_n| = \sum_{j=0}^n r^{j} |D_{j}|\, V_{n-j}(K), \quad \forall r\geq 0.
\end{equation*}
More explicitly, by Kubota's integral formula, we have that
\begin{equation}\label{eqn:kubota}
	V_j(K) = \binom{n}{j} \frac{|D_n|}{|D_j||D_{n-j}|} \int_{\Gr_j(\R^n)} |\pi_E(K)|\, d\nu_j(E),\quad \text{for $j\in \{0,1,\dotsc,n\}$,}
\end{equation}
where $\Gr_j(\R^n)$ is the Grassmannian of all $j$-dimensional linear subspaces of $\R^n$, $\nu_j$ is the uniquely determined Haar probability measure on $\Gr_j(\R^n)$, and $\pi_E(K)$ denotes the orthogonal projection of $K$ to $E\in\Gr_j(\R^n)$.
Note that the $n$th intrinsic volume is just the volume, the $(n-1)$th intrinsic volume is half of the surface area, and the first intrinsic volume is the same as the mean width, up to a dimensional constant (see Section \ref{sec:preliminaries} for more background).
By Hadwiger's theorem, the intrinsic volumes span the space of all continuous rigid motion invariant valuations, and they are normalized such that if $K$ is $j$-dimensional, then $V_j(K)$ is exactly the $j$-dimensional Lebesgue measure of $K$.

To measure the distance between two arbitrarily positioned convex bodies $K$ and $L$, we introduce the \emph{$j$th intrinsic volume deviation} $\Delta_j(K,L)$ by setting
\begin{equation}\label{Deltajdef}
	\Delta_j(K,L) := V_j(K)+V_j(L) - 2V_j(K\cap L).
\end{equation}
By Groemer's extension theorem (see, e.g.\ \cite[Thm.\ 6.2.5]{Schneider:2014}), the $j$th intrinsic volume $V_j$ can be extended to a valuation on the lattice generated by all finite unions of convex bodies, also denoted by $V_j$. Then $V_j(K\cup L) = V_j(K)+V_j(L)-V_j(K\cap L)$ and we can write
\begin{equation*}
	\Delta_j(K,L) = V_j(K\cup L) - V_j(K\cap L).
\end{equation*}
The $j$th intrinsic volume deviation is symmetric, nonnegative and definite, i.e., $\Delta_j(K,L) = 0$ if and only if $K=L$. However, $\Delta_j$ is not a metric for $j\in\{0,1,\ldots,n-1\}$, as in general it does not satisfy the triangle inequality (see Appendix \ref{sec:not_a_metric}). It is continuous on the metric space of convex bodies in $\R^n$ that contain the origin in their interiors, equipped with the Hausdorff distance.
Note that $\Delta_n$ is the symmetric  difference metric (or Nikod\'ym distance) and $\Delta_{n-1}$ is half of the symmetric surface area deviation; see, e.g., \cite{Florian:1989}.
Furthermore, $\Delta_1$ is related to the $L^1$ metric considered in \cite{GlasauerGruber:1997, Ludwig:1999}, and in Appendix \ref{sec:width_rel} we compare these two notions of distance.

Finally, in the dual Brunn--Minkowski theory introduced by Lutwak \cite{Lutwak:1975}, we can make sense of all of the above constructions by essentially replacing the intrinsic volumes $V_j$ with the dual volumes $\dV_j$. This leads to the notion of dual volume deviations, for which we are able to establish best and random approximation estimates for general convex bodies.

\subsection*{Organization of the paper}
In the next section, we state our main results. In Section \ref{sec:preliminaries}, we collect important preliminary results before proving our main theorems in Sections \ref{proofmainthm}--\ref{sec:dual_thm}. In the final section, Section \ref{sec:stochasticwills},  we consider stochastic extensions of the Wills and dual Wills functionals. We also include an Appendix with Sections \ref{sec:not_a_metric}--\ref{sec:width_rel} where we prove results that we believe are well-known but could not find suitable literature, and also provide estimates for the hidden constants in most of our statements.

\section{Statement of principal results}
For a fixed convex body $K\subset \R^n$, $n\geq 2$, we are interested in minimizing the functional $P\mapsto \Delta_j(K,P)$, $j\in \{1,\dotsc,n\}$, where   special restrictions are imposed on the convex polytopes $P$. We therefore set
\begin{equation}\label{bestdef}
	\Delta_j(K, \cC_N) := \min_{P\in \cC_N} \Delta_j(K,P),
\end{equation}
where $\cC_N$ is a fixed subset of the class $\cP(\R^n)$ of all convex polytopes  in $\R^n$. We focus on the following classical types of polytopes:
\begin{align*}
	\mathscr P_N  &:= \{P \in \cP(\R^n): \text{$P$ has at most $N$ vertices}\},&
	\cP_N^i(K)&:= \{P \in \cP_N: P\subset K\},\\
	\cP_{(N)} &:= \{P\in \cP(\R^n): \text{$P$ has at most $N$ facets}\},&
	\cP_{(N)}^i(K)&:= \{P\in \cP_{(N)}: P\subset K\},\\
	&&
	\cP_{(N)}^o(K)&:= \{P\in \cP_{(N)}: P\supset K\}.
\end{align*}
In case of the ball, that is, $K=D_n$, we simply write $\cP_{N}^i$, $\cP_{(N)}^i$ and $\cP^o_{(N)}$. For each of these classes $\cC_N$, it follows from a compactness argument that there exists a {\it best-approximating polytope} $P\in\cC_N$ of $K$ that achieves the minimum in \eqref{bestdef}.

\subsection{Intrinsic volume approximation of the Euclidean ball}
\begin{theorem}\label{thm:1}
	Fix $n\geq 2$ and let $j\in \{1,\dotsc,n\}$. Then there exist absolute constants $c_1,\dotsc,c_5>0$ such that for all sufficiently large $N$, the following inequalities hold true:
	\begin{enumerate}
		\item[i)] for the inscribed case $\cC_N=\cP_N^i$ or the circumscribed case $\cC_N=\cP_{(N)}^o$, 
		\begin{equation*}
			c_1 j \, V_j(D_n) \leq \, N^{\frac{2}{n-1}}\, \Delta_j(D_n,\cC_N) \leq c_2 j \, V_j(D_n);
		\end{equation*}
		\item[ii)] in the case $\cC_N=\cP_{(N)}^i$,
		\begin{equation*}
			N^{\frac{2}{n-1}}\,\Delta_j(D_n,\cP_{(N)}^i) \leq c_3 j \, V_j(D_n);
		\end{equation*}
		\item[iii)] in the general position case $\cC_N=\cP_N$,
		\begin{equation*}
			N^{\frac{2}{n-1}}\,\Delta_j(D_n,\cP_N) \leq c_4 \min\left\{1, \frac{j \ln n}{n}\right\}\, V_j(D_n);
		\end{equation*}
		\item[iv)] and in the general position case $\cC_N=\cP_{(N)}$,
		\begin{equation*}
			N^{\frac{2}{n-1}}\,\Delta_j(D_n,\cP_{(N)}) \leq c_5 \, V_j(D_n), \qquad \text{if $j\geq n-c_0$,}
		\end{equation*}
			for some absolute constant $c_0\in\N$; see Remark \ref{constantsthm2}.
	\end{enumerate}
\end{theorem}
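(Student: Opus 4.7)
The plan is to reduce every estimate to the classical problem of volume approximation of $D_j$ by polytopes, via Kubota's formula \eqref{eqn:kubota}. For an inscribed polytope $P \in \cP_N^i$,
\begin{equation*}
	\Delta_j(D_n,P) = V_j(D_n)-V_j(P) = c_{n,j}\int_{\Gr_j(\R^n)}\bigl(|D_j|-|\pi_E P|\bigr)\, d\nu_j(E),
\end{equation*}
where $c_{n,j}=\binom{n}{j}|D_n|/(|D_j||D_{n-j}|)$. A short $\Gamma$-function calculation identifies $c_{n,j}|D_j|$ with $jV_j(D_n)$ up to an absolute factor, so a per-projection deviation of order $|D_j|N^{-2/(n-1)}$ is precisely what is needed for the target $jV_j(D_n)N^{-2/(n-1)}$ bound. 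An analogous formula applies on the circumscribed side.

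For the upper bound in (i), I would exhibit a single polytope whose projection onto \emph{every} $E \in \Gr_j(\R^n)$ is simultaneously a near-optimal approximation of $D_j$. A natural candidate is the convex hull $P_N$ of $N$ i.i.d.\ uniform points on $\Sp$: applying Fubini together with Affentranger's formula \eqref{eqn:volume_rand_approx_inside} to each projected sample gives $\E\bigl[|D_j|-|\pi_E P_N|\bigr] \lesssim |D_j| N^{-2/(n-1)}$ uniformly in $E$, and integration over $\Gr_j(\R^n)$ yields the claimed bound. The circumscribed case is handled analogously with a dual construction built from tangent hyperplanes at $N$ well-spread points of $\Sp$, while part (ii) follows from the same class of constructions, or alternatively by comparing an inscribed $N$-vertex polytope to an inscribed polytope with $O(N)$ facets of comparable $V_j$-deviation.

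The lower bound in (i) is the principal obstacle. A pointwise estimate $|D_j|-|\pi_E P|\gtrsim |D_j|N^{-2/(n-1)}$ inside the Kubota integral is false: since $\pi_E P$ may carry up to $N$ vertices in dimension $j$, Gruber's classical bound only forces the weaker rate $N^{-2/(j-1)}$. Instead I would argue directly in $\R^n$ by decomposing the shell $D_n\setminus P$ into caps $\Sigma_i$ cut off by the hyperplanes supporting the facets $F_i$ of $P$, and using Federer's curvature-measure representation of $V_j$ (whose leading coefficient carries a combinatorial factor of order $j$) to express the deficit $V_j(D_n)-V_j(P)$ as a sum over facets of cap-dependent integrals. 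Jensen's inequality, combined with the constraints $\sum_i |F_i| \asymp n|D_n|$ and (number of facets) $\lesssim N$, then forces the rate $jV_j(D_n)N^{-2/(n-1)}$. Parts (iii) and (iv) are then obtained by radially rescaling the best-approximating polytope from (i) by an optimal factor $t\ne 1$: in (iii) the balance between the loss in $D_n\setminus tP$ and the gain in $tP\setminus D_n$ yields the improvement factor $\min\{1,j\ln n/n\}$ (the $\ln n$ originating from the refined Mankiewicz--Sch\"utt balance \cite{MankiewiczSchutt:2000,MankiewiczSchutt:2001}), while the same inward-rescaling trick produces the $j$-free rate in (iv) only for $j\geq n-c_0$, since otherwise the dilation affects $V_j$ too weakly to overcome the circumscribed rate.
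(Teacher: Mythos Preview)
Your overall strategy---reducing to volume via Kubota and working per projection---is genuinely different from the paper's route, but it contains a real gap in the lower bound for (i), and the sketch for (iv) does not match what is actually needed.

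\textbf{The lower bound in (i).} Your cap-decomposition argument relies on the constraint ``number of facets $\lesssim N$''. For $\cC_N=\cP_N^i$ this is false: an inscribed polytope with $N$ vertices can have on the order of $N^{\lfloor n/2\rfloor}$ facets (upper bound theorem), so the Jensen step collapses and you recover nothing better than the trivial rate. The paper avoids this entirely by never looking at facets or caps. Instead it uses the extended isoperimetric inequality (a consequence of Alexandrov--Fenchel): for $P\subset D_n$,
\[
	1-\Bigl(1-\tfrac{\Delta_1(D_n,P)}{V_1(D_n)}\Bigr)^j \;\leq\; \tfrac{\Delta_j(D_n,P)}{V_j(D_n)} \;\leq\; 1-\Bigl(1-\tfrac{\Delta_n(D_n,P)}{|D_n|}\Bigr)^{j/n},
\]
which reduces both bounds in (i) to the already-known cases $j=1$ (Glasauer--Gruber) and $j=n$ (Gruber). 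This is a two-line argument once you have that inequality, and it simultaneously delivers the correct $j$-dependence.

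\textbf{Upper bound in (i) and part (ii).} Your random-point construction is fine in spirit, but the per-projection application of Affentranger is shaky: the projection of a uniform point on $\S^{n-1}$ onto $E\in\Gr_j$ is not uniform on $\partial D_j$, so \eqref{eqn:volume_rand_approx_inside} does not apply inside the integral. You could instead invoke Affentranger's result for $\E\Delta_j(D_n,P_N)$ directly (which is what the paper records as Theorem~\ref{affentranger:thm}), or---as the paper does---just use the right half of the sandwich above together with the known $j=n$ bound. Part (ii) follows the same way from the Bronshtein--Ivanov volume bound.

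\textbf{Part (iv).} Rescaling alone will not produce the bound for $j\geq n-c_0$: a dilation changes all $V_j$ by a power of the scale factor and cannot by itself convert a rate for $\Delta_n$ into one for $\Delta_{n-1}$, let alone $\Delta_{n-2}$. The paper's argument is genuinely different: it starts from Kur's polytope, which \emph{simultaneously} satisfies $\Delta_n\lesssim |D_n|N^{-2/(n-1)}$ and $\Delta_{n-1}\lesssim V_{n-1}(D_n)N^{-2/(n-1)}$, and then inducts downward using the log-concavity inequality $V_{n-j-1}(P)\leq V_{n-j}(P)^2\,V_{n-j-1}(D_n)V_{n-j+1}(D_n)\big/\bigl(V_{n-j}(D_n)^2 V_{n-j+1}(P)\bigr)$. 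The constants double at each step, which is exactly why the result is limited to $j\geq n-c_0$.

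Your idea for (iii) is closest to the paper's: the paper does rescale a random inscribed polytope, but the analysis is more delicate than a single balancing---it compares the $j$-dependent rescaling $r_j$ to the volume-normalising $R$, controls $R-r_j$ via the explicit constants $\alpha(n,j)$ from Affentranger, and feeds this into a Bernoulli-type inequality after a Chebyshev/variance cutoff.
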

We prove Theorem \ref{thm:1} in Section \ref{proofmainthm}.
Not all results stated in this theorem are new. For more specific information on the known special cases of Theorem \ref{thm:1},  please see the discussion in Subsection \ref{comparison1} below.

\begin{remark}\label{constantsthm1}
In the proof of Theorem \ref{thm:1} i) for $\cC_N=\cP_N^i$ or $\cC_N=\cP_{(N)}^o$, we derive the  asymptotic inequalities
\begin{align}\label{eqn:asymptotic_bounds}
	\begin{split}
	\limsup_{N\to\infty} N^{\frac{2}{n-1}}\,\Delta_j(D_n,\cC_N) &\leq \frac{1}{2} \del_{n-1}|\partial D_n|^{\frac{2}{n-1}}j \, V_j(D_n)\\
	\liminf_{N\to\infty} N^{\frac{2}{n-1}}\,\Delta_j(D_n,\cC_N) &\geq \frac{1}{2} \div_{n-1}|\partial D_n|^{\frac{2}{n-1}}j \, V_j(D_n).
	\end{split}
\end{align}
Here $|\partial D_n|=n|D_n|$ denotes the surface area of $D_n$, and $\del_{n-1}$ and $\div_{n-1}$ are  positive constants that depend only on the dimension. These constants appear in the volume and mean width best approximations, see \cite{Gruber:1991} and \cite{GlasauerGruber:1997}, respectively, as well as Subsection \ref{relatedresults1}. 
In particular, \eqref{eqn:asymptotic_bounds} implies 
\begin{equation}\label{eqn:del_div_ineq}
	\div_{n-1}\leq \del_{n-1} \qquad \text{for all $n\geq 2$}.
\end{equation}
It is known that $\del_{n-1}|\partial D_n|^{\frac{2}{n-1}}=1+O(\frac{\ln n}{n})$ \cite{MankiewiczSchutt:2000, MankiewiczSchutt:2001} and $\div_{n-1}|\partial D_n|^{\frac{2}{n-1}}=1+\frac{\ln n}{n}+O(\frac{1}{n})$ \cite{HoehnerKur:2018}. Thus, the absolute constants $c_1,c_2$ in Theorem \ref{thm:1} are asymptotically
\begin{equation}\label{eqn:const_asympt}
	c_1 \sim c_2 = \frac{1}{2}+O\left(\frac{\ln n}{n}\right).
\end{equation}
Unfortunately, \eqref{eqn:asymptotic_bounds} does not imply the existence of the limit $\lim_{N\to \infty} N^{\frac{2}{n-1}}\, \Delta_j(D_n,\cC_N)$. 
To the best of our knowledge, the limit is only known to exist  for $j=1$ and $j=n$, and the other cases remain open.
\end{remark}

\begin{remark}
	The case $j=n-1$ and $\cC_N=\cP_{(N)}^o$ poses a notable exception because  
	\begin{equation*}
		\Delta_{n-1}(D_n,\cP_{(N)}^o) = \frac{n}{2} \Delta_{n}(D_n,\cP_{(N)}^o).
	\end{equation*}
	This follows from the fact that the volume of the cone generated by the origin and a facet tangent to the ball $D_n$ is exactly $1/n$ times the volume of the facet. This does not apply for general convex bodies, and in \cite{BoroczkyCsikos:2009} a limit theorem was established for the surface area best approximation of convex bodies by circumscribed polytopes with a bounded number of facets.
\end{remark}

\begin{remark}\label{constantsthm2}
Note that  in Theorem \ref{thm:1} iii), the upper bound $O\left(\frac{j\ln n}{n}\right)$ is better for lower order $j$ than the upper bound $O\left(1\right)$. On the other hand, for higher order $j$, e.g., $j=cn$ for some $c\in(0,1)$, the upper bound $O(1)$ is better than $O\left(\frac{j\ln n}{n}\right)$. From \cite[Thm.\ 2.4]{Kur:2017} and the proof of Theorem \ref{thm:1} iv) for $\cP_{(N)}$,
it follows that iv) holds for $\cP_{(N)}$ provided $N\geq n^n$. Moreover, by \cite[Rmk.\ 2.6]{Kur:2017} and the proof of  iv), the bound on the number of facets can be improved to $N\geq 10^n$, which causes a change to the value of $c_5$. For $N\geq n^n$,
the value of $c_5$ was given in \cite[Rmk.\ 2.5]{Kur:2017} in the cases $j=n$ and $j=n-1$ (see Theorem \ref{kur2} and Remark \ref{gilconstants} below). 
Using these values, $c_5$ can be estimated recursively for $j\in \{n-c_0,\dotsc,n\}$ using the argument in the proof of iv). Please see Subsection \ref{arbitraryfacetspf} for more details. 
\end{remark}

\subsection{Polytopes with a bounded number of \texorpdfstring{$k$-faces}{k-faces}}

Instead of bounding the number of vertices or facets of the approximating polytopes, one may ask for results on best approximation with respect to polytopes that have a bounded number of $k$-faces. In this setting, asymptotic bounds for the volume and mean width were previously obtained in \cite{Boroczky:2000b}. More recently, asymptotic results were obtained  for the volume \cite{BGT:2008} and Hausdorff \cite{BFVedges:2008} approximations of $C^2$ convex bodies in $\R^3$ by polytopes with a restricted number of edges.

For a polytope $P$ in $\R^n$ and an integer $k\in\{0,\ldots,n-1\}$, let $f_k(P)$ denote the number of $k$-faces of $P$.
For all $n\geq 3$ and any polytope $P\in \cP(\R^n)$, by the handshaking lemma we have
\begin{equation}\label{handshaking}
	f_1(P) \geq \frac{n}{2}f_0(P)> f_0(P) \quad \text{and}\quad f_{n-2}(P) \geq \frac{n}{2}f_{n-1}(P)> f_{n-1}(P).
\end{equation}
For a simplicial polytope $P^s$, these inequalities can be extended to
\begin{equation}\label{unimodality}
	f_0(P^s)<f_1(P^s)<\cdots<f_{\lfloor n/2\rfloor}(P^s) \quad \text{and}\quad
	f_{\lfloor 3(n-1)/4\rfloor}(P^s)>\cdots>f_{n-1}(P^s),
\end{equation}
where $\lfloor x\rfloor$ denotes the integer part of $x\in \R$.
This result is due to Bj\"orner \cite{Bjorner:1984}, who called this property of the $f$-vector $(f_0(P^s), f_1(P^s),\ldots, f_{n-1}(P^s))$ of a simplicial polytope $P^s$ in $\R^n$ its ``75\% unimodality''.

Using \eqref{handshaking} and \eqref{unimodality}, we derive an immediate corollary to Theorem \ref{thm:1}. Define the following classes of polytopes in $\R^n$:
\begin{align*}
	\cP_{k,N}^{i} &:= \{P\in \cP(\R^n): \text{$P\subset D_n$ and $f_k(P) \leq N$}\},\\
	\cP_{k,N}^{o} &:= \{P\in \cP(\R^n): \text{$P\supset D_n$ and $f_k(P) \leq N$}\}.
\end{align*}
We also  let $\cP_{k,N}^{i,s}\subset \cP_{k,N}^i$ and $\cP_{k,N}^{o,s}\subset \cP_{k,N}^o$ denote the corresponding subsets of  simplicial polytopes.
Note that $\cP_{0,N}^{i} = \cP_N^i$ and $\cP_{n-1,N}^{o} = \cP_{(N)}^o$.
\begin{corollary}\label{thm:cor1}
Let $n\geq 2$ and $j\in \{1,\dotsc,n\}$.
\begin{itemize}

\item[i)] For inscribed polytopes with a bounded number of edges, that is, $\cC_N=\cP_{1,N}^i$, 
	or for circumscribed polytopes with a bounded number of ridges, that is, $\cC_N=\cP_{n-2,N}^o$, it holds that
	\begin{equation}\label{eqn:lowerbound_edges}
		\liminf_{N\to\infty} N^{\frac{2}{n-1}}\, \Delta_j(D_n,\cC_N) 
			\geq 2^{-\frac{n+1}{n-1}} n^{\frac{2}{n-1}} \div_{n-1} |\partial D_n|^{\frac{2}{n-1}}jV_j(D_n).
\end{equation}

\item[ii)] Fix $k\in\{0,\ldots,\lfloor n/2\rfloor\}$. Then
	\begin{equation}\label{eqn:lowerbound_k_1}
		\liminf_{N\to\infty} N^{\frac{2}{n-1}}\, \Delta_j(D_n,\cP_{k,N}^{i,s}) \geq \frac{1}{2}\div_{n-1} |\partial D_n|^{\frac{2}{n-1}}jV_j(D_n).
	\end{equation}

\item[iii)] Fix $k\in\{\lfloor 3(n-1)/4\rfloor,\ldots,n-1\}$. Then
	\begin{equation}
		\liminf_{N\to\infty} N^{\frac{2}{n-1}}\, \Delta_j(D_n,\cP_{k,N}^{o,s}) \geq \frac{1}{2}\div_{n-1} |\partial D_n|^{\frac{2}{n-1}}jV_j(D_n).
	\end{equation}
\end{itemize}
\end{corollary}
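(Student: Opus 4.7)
The plan is to obtain Corollary \ref{thm:cor1} as a direct consequence of the lower-bound half of \eqref{eqn:asymptotic_bounds} from Remark \ref{constantsthm1}, by exhibiting inclusions between the new face-restricted classes and the classes $\cP_M^i$, $\cP_{(M)}^o$ appearing in Theorem \ref{thm:1} i). Whenever $\cC_N \subset \cC'_{M(N)}$, minimization over the smaller set only increases the minimum, so $\Delta_j(D_n, \cC_N) \geq \Delta_j(D_n, \cC'_{M(N)})$; the liminf lower bound on $M^{\frac{2}{n-1}} \Delta_j(D_n, \cC'_M)$ then translates into one on $N^{\frac{2}{n-1}} \Delta_j(D_n, \cC_N)$, with the sole effect of the reindexing being the asymptotic factor $(N/M)^{\frac{2}{n-1}}$.

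For part i), the handshaking inequality $f_0(P) \leq 2 f_1(P)/n$ from \eqref{handshaking} shows that $\cP_{1,N}^i \subset \cP_M^i$ with $M = \lfloor 2N/n \rfloor$, and dually $f_{n-1}(P) \leq 2 f_{n-2}(P)/n$ shows $\cP_{n-2,N}^o \subset \cP_{(M)}^o$ for the same $M$. Since $N/M \to n/2$ as $N \to \infty$, applying \eqref{eqn:asymptotic_bounds} produces the prefactor $(n/2)^{\frac{2}{n-1}} \cdot \frac{1}{2} = 2^{-\frac{n+1}{n-1}} n^{\frac{2}{n-1}}$ in front of $\div_{n-1} |\partial D_n|^{\frac{2}{n-1}} j\, V_j(D_n)$, matching \eqref{eqn:lowerbound_edges}.

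For parts ii) and iii), the restriction to simplicial polytopes lets us invoke Bj\"orner's unimodality \eqref{unimodality}. In ii), for $k \in \{0,\dots,\lfloor n/2 \rfloor\}$ we have $f_0(P^s) \leq f_k(P^s) \leq N$, so $\cP_{k,N}^{i,s} \subset \cP_N^i$; in iii), for $k \in \{\lfloor 3(n-1)/4 \rfloor,\dots,n-1\}$ we have $f_{n-1}(P^s) \leq f_k(P^s) \leq N$, so $\cP_{k,N}^{o,s} \subset \cP_{(N)}^o$. Both inclusions are index-preserving ($M = N$), so \eqref{eqn:asymptotic_bounds} transfers verbatim, yielding the stated bounds without any extra dimensional factor.

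There is no real analytic obstacle: the entire argument is a transport along inclusions of polytope classes, and all asymptotic content is inherited from Theorem \ref{thm:1} i). The only point of care is the constant bookkeeping in part i), where the two-for-one reduction of edges (or ridges) to vertices (or facets) introduces the factor $(n/2)^{\frac{2}{n-1}}$; parts ii) and iii) produce no such factor precisely because Bj\"orner's inequalities are qualitative rather than quantitative.
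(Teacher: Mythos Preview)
Your proposal is correct and follows essentially the same approach as the paper: both use the inclusions $\cP_{1,N}^i \subset \cP_{\lfloor 2N/n\rfloor}^i$ (and the dual ridge version) from \eqref{handshaking} for part i), and the inclusions $\cP_{k,N}^{i,s}\subset \cP_N^i$, $\cP_{k,N}^{o,s}\subset \cP_{(N)}^o$ from Bj\"orner's unimodality \eqref{unimodality} for parts ii) and iii), then transport the lower bound in \eqref{eqn:asymptotic_bounds}. Your constant bookkeeping in part i) matches the paper's exactly.
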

\begin{proof}
By \eqref{handshaking} we have $\cP_{1,N}^i \subset \cP_{\lfloor 2N/n \rfloor}^i$. Thus, by Theorem \ref{thm:1} i) and \eqref{eqn:asymptotic_bounds} we obtain
\begin{equation*}
	\liminf_{N\to\infty} N^{\frac{2}{n-1}} \Delta_j(D_n,\cP_{1,N}^{i})  
	\geq \liminf_{N\to\infty} N^{\frac{2}{n-1}} \Delta_j(D_n,\cP_{\lfloor 2N/n\rfloor}^i)
	\geq \left(\!\frac{n}{2}\!\right)^{\frac{2}{n-1}} \frac{1}{2}\div_{n-1}|\partial D_n|^{\frac{2}{n-1}}j\, V_j(D_n).
\end{equation*}
The case $\cC_N=\cP_{n-2,N}^o$ and parts ii) and iii) follow analogously by \eqref{handshaking} and \eqref{unimodality}.
\end{proof}

Equivalently, we may formulate Corollary \ref{thm:cor1} as a  bound on the minimal number of $k$-faces of an inscribed or circumscribed simplicial polytope required to obtain an $\varepsilon$-approximation of the ball with respect to the intrinsic volume deviation.
\begin{corollary}
	Let $n\geq 2$ and $j\in\{1,\dotsc,n-1\}$.
	There exist absolute constants $c_5$, $c_6>0$ such that the following holds true for all sufficiently small $\varepsilon>0$:
	\begin{enumerate}
		\item[i)] If $P_\varepsilon\subset D_n$ is a simplicial polytope such that $V_j(D_n)-V_j(P_\varepsilon) \leq \varepsilon$, then
			\begin{equation*}
				f_k(P_\varepsilon) \geq c_5 \left(\frac{jV_j(D_n)}{2\varepsilon}\right)^{\frac{n-1}{2}} \ln n
			\end{equation*}
			for all $k\in\{0,\dotsc, \lfloor n/2\rfloor\}$. For $k\in \{0,1\}$, this bound holds true even if $P_\varepsilon$ is not simplicial.
		\item[ii)] If $P_\varepsilon \supset D_n$ is a simplicial polytope such that $V_j(P_\varepsilon) - V_j(D_n) \leq \varepsilon$, then
			\begin{equation*}
				f_k(P_\varepsilon) \geq c_6  \left(\frac{jV_j(D_n)}{2\varepsilon}\right)^{\frac{n-1}{2}} \ln n
			\end{equation*}
			for all $k\in\{\lfloor 3(n-1)/4\rfloor,\dotsc,n-1\}$. For $k\in\{n-2,n-1\}$, this bound holds true even if $P_\varepsilon$ is not simplicial.
	\end{enumerate}
\end{corollary}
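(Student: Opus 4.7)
The plan is to invert the asymptotic lower bounds of Corollary \ref{thm:cor1}: each such bound controls $\Delta_j(D_n, \cC_N)$ from below in terms of $N$, and solving for $N = f_k(P_\varepsilon)$ in terms of the admissible error $\varepsilon$ will yield the desired estimates on the number of $k$-faces.

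First, I would translate the hypothesis into a bound on the intrinsic volume deviation. In case (i), the inclusion $P_\varepsilon \subset D_n$ yields $P_\varepsilon \cap D_n = P_\varepsilon$, so
$$\Delta_j(D_n, P_\varepsilon) = V_j(D_n) + V_j(P_\varepsilon) - 2 V_j(P_\varepsilon) = V_j(D_n) - V_j(P_\varepsilon) \leq \varepsilon;$$
symmetrically, $P_\varepsilon \supset D_n$ in case (ii) gives $\Delta_j(D_n, P_\varepsilon) \leq \varepsilon$. Setting $N_\varepsilon := f_k(P_\varepsilon)$, the polytope $P_\varepsilon$ lies in $\cP_{k, N_\varepsilon}^{i,s}$ (resp.\ $\cP_{k, N_\varepsilon}^{o,s}$), whence $\Delta_j(D_n, \cP_{k, N_\varepsilon}^{i,s}) \leq \varepsilon$ (resp.\ $\Delta_j(D_n, \cP_{k, N_\varepsilon}^{o,s}) \leq \varepsilon$). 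Since the best approximation is strictly positive for each fixed $N$ and decreases to zero only as $N \to \infty$, taking $\varepsilon$ sufficiently small forces $N_\varepsilon$ arbitrarily large, so the $\liminf$ inequalities of Corollary \ref{thm:cor1} ii)--iii) apply up to a factor $1 - o_\varepsilon(1)$.

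Applying the appropriate bound then yields
$$\varepsilon \cdot N_\varepsilon^{2/(n-1)} \geq \tfrac{1}{2}\, \div_{n-1}\, |\partial D_n|^{2/(n-1)}\, j\, V_j(D_n)\,(1 - o_\varepsilon(1)),$$
and rearranging, together with Remark \ref{constantsthm1}'s estimate $\div_{n-1} |\partial D_n|^{2/(n-1)} = 1 + \ln n / n + O(1/n)$, gives
$$N_\varepsilon \geq \Bigl(1 + \tfrac{\ln n}{n}\Bigr)^{(n-1)/2} \Bigl(\tfrac{j V_j(D_n)}{2\varepsilon}\Bigr)^{(n-1)/2}(1 - o_\varepsilon(1)).$$
A direct exponentiation shows that $(1 + \ln n / n)^{(n-1)/2} \geq c \sqrt{n}$ for some absolute $c > 0$ and every $n \geq 2$; since $\sqrt{n} \geq (e/2)\ln n$ for every $n \geq 2$, the claimed bound $f_k(P_\varepsilon) \geq c_5 (jV_j(D_n)/(2\varepsilon))^{(n-1)/2} \ln n$ follows with an absolute constant $c_5$.

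For the non-simplicial sub-cases $k \in \{0,1\}$ in (i) and $k \in \{n-2, n-1\}$ in (ii), the same inversion applies but using Theorem \ref{thm:1} i) when $k \in \{0, n-1\}$ and Corollary \ref{thm:cor1} i) when $k \in \{1, n-2\}$ (the latter invokes the handshaking inequality \eqref{handshaking} instead of Bj\"orner's unimodality \eqref{unimodality}), and neither requires simplicial polytopes. The argument is essentially a routine inversion of the preceding corollary, so I do not foresee a serious obstacle; the only subtlety is verifying that ``$\varepsilon$ sufficiently small'' indeed translates into ``$N_\varepsilon$ in the regime where the asymptotic bound is valid,'' which follows from the continuity of $V_j$ on convex bodies.
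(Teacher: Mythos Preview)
Your proposal is correct and follows essentially the same route as the paper: both set $N=f_k(P_\varepsilon)$, observe that small $\varepsilon$ forces $N$ large, invoke the $\liminf$ lower bound from Corollary~\ref{thm:cor1} (or Theorem~\ref{thm:1}~i) for the non-simplicial cases), and then invert, using the estimate $\div_{n-1}|\partial D_n|^{2/(n-1)}=1+\tfrac{\ln n}{n}+O(\tfrac{1}{n})$ to extract the $\ln n$ factor. The only cosmetic difference is that the paper bounds $(1+\tfrac{c\ln n}{n})^{(n-1)/2}\geq 1+\tfrac{c\ln n}{4}$ directly via Bernoulli's inequality, whereas you pass through the sharper intermediate estimate $(1+\tfrac{\ln n}{n})^{(n-1)/2}\geq c\sqrt{n}$ before dropping back to $\ln n$; both arrive at the same conclusion.
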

\begin{proof}
	Let $P_\varepsilon\subset D_n$ be a simplicial polytope such that $V_j(D_n)-V_j(P_\varepsilon)\leq \varepsilon$.
	Set $N:=f_k(P_\varepsilon)$. If $\varepsilon$ is sufficiently small, then $N$ has to be large. Thus, by \eqref{eqn:lowerbound_k_1} we find
	\begin{equation*}
		f_k(P_\varepsilon)^{\frac{2}{n-1}}  \varepsilon \geq N^{\frac{2}{n-1}} \, \Delta_j(D_n, \cP_n^i) \geq \frac{jV_j(D_n)}{2} \div_{n-1}|\partial D_n|^{\frac{2}{n-1}}.
	\end{equation*}
	Since $\div_{n-1} |\partial D_n|^{\frac{2}{n-1}} = 1+ \frac{\ln n}{n} + O(\frac{1}{n})$, there exists an absolute constant $c>0$ such that $\div_{n-1} |\partial D_n|^{\frac{2}{n-1}} \geq 1+ \frac{c\ln n}{n}$ for all $n\geq 2$.
	Therefore,
	\begin{equation*}
		f_k(P_\varepsilon) 
		\geq \left(\frac{jV_j(D_n)}{2\varepsilon}\right)^{\frac{n-1}{2}} \left(1+\frac{c\ln n}{n}\right)^{\frac{n-1}{2}}
		\geq \left(\frac{jV_j(D_n)}{2\varepsilon}\right)^{\frac{n-1}{2}} \left(1 + \frac{c \ln n}{4}\right),
	\end{equation*}
	which establishes i). With similar arguments, ii) follows.
\end{proof}

\begin{remark}
	One may also consider simple polytopes instead of simplicial polytopes and derive corresponding results. Since the polar of a simple polytope is simplicial, the $f$-vector of simple polytope $P$ is also ``75\% unimodal'', i.e.,
\begin{equation*}
	f_0(P)<f_1(P)<\cdots<f_{\lfloor n/4 \rfloor}(P) \quad \text{and}\quad
	f_{\lfloor (n-1)/2\rfloor}(P)>\cdots>f_{n-1}(P).
\end{equation*}
\end{remark}

\subsection{Simultaneous approximation and the Wills deviation}

From the proof of Theorem \ref{thm:1} follows a remarkable  approximation property of the Euclidean ball: There is a polytope which is almost optimal for the ball with respect to all intrinsic volumes simultaneously.

\begin{corollary}\label{cor:sim_approx}
	For all sufficiently large $N$ there exist polytopes $P_N\in \cP_N^i$, respectively $P_N\in \cP_{(N)}^o$, such that
	\begin{equation*}
		c_1 n N^{-\frac{2}{n-1}} \leq \max_{j=1,\dotsc,n} \frac{\Delta_j(D_n, P_N)}{V_j(D_n)} \leq  c_2 n N^{-\frac{2}{n-1}},
	\end{equation*}
	where $c_1,c_2>0$ are the same absolute constants from Theorem \ref{thm:1}.
\end{corollary}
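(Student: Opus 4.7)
The strategy is to observe that the corollary follows almost immediately from Theorem~\ref{thm:1} i), provided the polytope realizing its upper bound can be chosen independently of the order $j$ of the intrinsic volume. The lower bound is the easy direction, while the upper bound rests on a careful reading of the constructive argument behind Theorem~\ref{thm:1} i).

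For the lower bound, let $P_N \in \cP_N^i$ (or $\cP_{(N)}^o$) be any polytope. Applying Theorem~\ref{thm:1} i) with the single choice $j = n$ yields
\[
\frac{\Delta_n(D_n, P_N)}{V_n(D_n)} \;\geq\; \frac{\Delta_n(D_n, \cC_N)}{V_n(D_n)} \;\geq\; c_1\, n\, N^{-\frac{2}{n-1}}.
\]
Since $j=n$ is one of the indices in the maximum on the left-hand side of the claimed inequality, this lower bound holds regardless of which polytope $P_N$ is ultimately chosen.

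For the upper bound, the plan is to extract from the proof of Theorem~\ref{thm:1} i) a single polytope $P_N$ satisfying $\Delta_j(D_n, P_N) \leq c_2\, j\, V_j(D_n)\, N^{-2/(n-1)}$ simultaneously for every $j \in \{1,\dotsc,n\}$. The geometric reason a single polytope can work for all $j$ at once is Kubota's formula \eqref{eqn:kubota}: writing $\Delta_j(D_n, P_N)$ as an average over $E \in \Gr_j(\R^n)$ of a $j$-dimensional deviation between $\pi_E(D_n)$ and $\pi_E(P_N)$ reduces the approximation problem for $V_j$ to a family of approximation problems for projections. If $P_N$ is built from a near-uniform arrangement on $\Sp$ (in the inscribed case) or from a near-uniform arrangement of tangent hyperplanes (in the circumscribed case) — as is typical in Gruber/Macbeath/cap-covering constructions — then the projection onto any $E \in \Gr_j(\R^n)$ behaves as a near-optimal approximation of $\pi_E(D_n) = D_n \cap E$, so the required bound for each $j$ holds with the same $P_N$. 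Dividing by $V_j(D_n)$ and taking the maximum over $j$ then gives $\max_{j=1,\dotsc,n} \Delta_j(D_n,P_N)/V_j(D_n) \leq c_2\, n\, N^{-2/(n-1)}$.

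The main obstacle is precisely the verification that the construction used in the proof of Theorem~\ref{thm:1} i) is $j$-independent in the above sense. Once this is established — which amounts to checking that the underlying cap-covering or Delone-type construction is defined geometrically without reference to $V_j$, and that the Kubota-type reduction can be carried out uniformly in $E$ — the rest of the proof is bookkeeping, using the asymptotics of $\del_{n-1} |\partial D_n|^{2/(n-1)}$ and $\div_{n-1}|\partial D_n|^{2/(n-1)}$ recorded in Remark~\ref{constantsthm1} to confirm that the absolute constants are indeed the same $c_1,c_2$ from Theorem~\ref{thm:1}.
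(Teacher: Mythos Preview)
Your lower bound is correct and in fact slightly more direct than the paper's: choosing $j=n$ and using the minimality of $\Delta_n(D_n,\cC_N)$ immediately yields the bound for any $P_N\in\cC_N$.

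Your upper bound, however, has a genuine gap. The Kubota/projection heuristic you sketch---that a near-uniform point set on $\Sp$ projects to a near-optimal configuration in every subspace $E\in\Gr_j(\R^n)$---is neither proved nor easy to make precise, and you acknowledge as much when you call it ``the main obstacle.'' Controlling $|\pi_E(D_n)\setminus\pi_E(P_N)|$ uniformly in $E$ with the right constants is a delicate statement, and nothing in the proof of Theorem~\ref{thm:1} i) supplies it.

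The paper avoids this entirely by a one-line application of the extended isoperimetric inequality (Theorem~\ref{extended_isoperimetric} i)). For any $P\subset D_n$ one has, by~\eqref{eqn:inscribed_ineq},
\[
\frac{\Delta_j(D_n,P)}{V_j(D_n)}\;\leq\;1-\left(1-\frac{\Delta_n(D_n,P)}{V_n(D_n)}\right)^{j/n},
\]
and the right-hand side is increasing in $j$, so its maximum over $j\in[n]$ is simply $\Delta_n(D_n,P)/V_n(D_n)$. Thus the \emph{volume}-best-approximating polytope $P_N^b\in\cP_N^i$ already satisfies
\[
\max_{j\in[n]}\frac{\Delta_j(D_n,P_N^b)}{V_j(D_n)}\;\leq\;\frac{\Delta_n(D_n,\cP_N^i)}{V_n(D_n)}
\;\sim\;\frac{n}{2}\del_{n-1}|\partial D_n|^{\frac{2}{n-1}}N^{-\frac{2}{n-1}},
\]
which gives the upper bound with constant $c_2$. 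The circumscribed case follows the same way from~\eqref{eqn:circum_ineq}. No projection argument or inspection of the underlying construction is needed; the $j$-independence is a consequence of isoperimetry, not of the geometry of cap coverings.
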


For another way to quantify how well a polytope approximates the ball in all intrinsic volumes simultaneously, we shall use the classical Wills functional $W:=\sum_{j=0}^n V_j$ (see, e.g., \cite{McMullen:1991, Vitale:1996}). For convex bodies $K,L\subset \R^n$, we therefore define the \emph{Wills deviation} $\Delta_\Sigma(K,L)$ by
\begin{equation*}
	\Delta_\Sigma(K,L) := W(K)+W(L)-2W(K\cap L) = \sum_{j=1}^n \Delta_j(K,L).
\end{equation*}
The Wills deviation is continuous on convex bodies that contain the origin in their interiors and is positive definite, but in general it does not satisfy the triangle inequality; see Appendix \ref{sec:not_a_metric} for specific counterexamples.  

\begin{theorem}\label{willsthm}
	Set $\widehat W(D_n):=\sum_{j=0}^n jV_j(D_n)$. Then with the same absolute constants $c_1,c_2,c_3>0$ from Theorem \ref{thm:1} and another absolute constant $c_7>0$,  for all sufficiently large $N$ the following estimates hold true:
	\begin{align*}
		\begin{array}{lrclclr}
		i)  &c_1\, \widehat W(D_n) &\!\!\!\!\leq\!\!\!\!& N^{\frac{2}{n-1}}\, \Delta_\Sigma(D_n,\cC_N)        &\!\!\!\!\leq\!\!\!\!& c_2 \widehat W(D_n)
																										& \text{for $\cC_N\in\{\cP_N^i,\cP_{(N)}^o\}$;}\\
		ii) &c_1\, \widehat W(D_n) &\!\!\!\!\leq\!\!\!\!& N^{\frac{2}{n-1}}\, \Delta_\Sigma(D_n,\cP_{k,N}^{i,s})        &           
																										&& \text{for $k\in\{0,1,\ldots,\lfloor n/2\rfloor\}$;}\\
		iii)&c_1\, \widehat W(D_n) &\!\!\!\!\leq\!\!\!\!& N^{\frac{2}{n-1}}\, \Delta_\Sigma(D_n,\cP_{k,N}^{o,s})        &           
																										&& \text{for $k\in\{\lfloor 3(n-1)/4\rfloor,\ldots,n-1\}$;}\\
		iv) &                            &&N^{\frac{2}{n-1}}\, \Delta_\Sigma(D_n,\cP_{(N)}^i) &\!\!\!\!\leq\!\!\!\!& c_3\, \widehat{W}(D_n);\\
		v)  &                            &&N^{\frac{2}{n-1}}\, \Delta_\Sigma(D_n,\cP_N)       &\!\!\!\!\leq\!\!\!\!& c_7\frac{\ln n}{n} \widehat{W}(D_n). 
		\end{array}
	\end{align*}
	Furthermore, the bound in ii)  for $k=1$ and the bound in iii) for $k=n-2$ also hold true for nonsimplicial polytopes.
\end{theorem}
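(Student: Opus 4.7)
The starting point is the identity $\Delta_\Sigma(D_n, P) = \sum_{j=1}^n \Delta_j(D_n, P)$, which reduces every bound to a question about the individual intrinsic volume deviations already treated in Theorem~\ref{thm:1} and Corollary~\ref{thm:cor1}. The two directions behave very differently: lower bounds sum term-by-term, while upper bounds require exhibiting a \emph{single} polytope attaining all the per-$j$ bounds simultaneously.

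For the lower bounds in i), ii), iii), my plan is to observe that for any $P \in \cC_N$,
$$
\Delta_\Sigma(D_n, P) \;=\; \sum_{j=1}^n \Delta_j(D_n, P) \;\geq\; \sum_{j=1}^n \Delta_j(D_n, \cC_N),
$$
so taking the minimum over $P$ on the left and inserting the per-$j$ lower bound $\Delta_j(D_n, \cC_N) \geq c_1 j V_j(D_n) N^{-2/(n-1)}$ --- from Theorem~\ref{thm:1}~i) for i), and from Corollary~\ref{thm:cor1} parts ii) and iii) for ii) and iii) --- will yield
$$
N^{2/(n-1)}\, \Delta_\Sigma(D_n, \cC_N) \;\geq\; c_1 \sum_{j=1}^n j V_j(D_n) \;=\; c_1 \widehat W(D_n).
$$
For the nonsimplicial extensions (the cases $k=1$ in ii) and $k=n-2$ in iii)) I would use Corollary~\ref{thm:cor1}~i) instead, whose prefactor $2^{-(n+1)/(n-1)} n^{2/(n-1)} \div_{n-1}|\partial D_n|^{2/(n-1)}$ is bounded below by a positive absolute constant uniformly in~$n$.

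For the upper bounds in i) and iv), the cleanest route is via Hausdorff approximation. For any inscribed polytope $P \subset D_n$ with $d_H(D_n, P) \leq \varepsilon$, support-function comparison gives $(1-\varepsilon)D_n \subset P$, and homogeneity of the intrinsic volumes then yields
$$
V_j(D_n) - V_j(P) \;\leq\; \bigl(1 - (1-\varepsilon)^j\bigr) V_j(D_n) \;\leq\; j\varepsilon\, V_j(D_n)
$$
uniformly in $j$. By the classical Bronshtein-type Hausdorff bound $d_H(D_n, \cP_N^i) \leq c N^{-2/(n-1)}$, a single such polytope simultaneously achieves $\varepsilon \leq c N^{-2/(n-1)}$ for every $j$, and summation over $j$ gives $\Delta_\Sigma(D_n, P) \leq c\, \widehat W(D_n) N^{-2/(n-1)}$. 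The circumscribed case $\cC_N = \cP_{(N)}^o$ is symmetric up to an absolute constant, and iv) will follow by a rescaling trick: if $P_o \supset D_n$ has $N$ facets and $d_H(D_n, P_o) \leq \varepsilon$, then $P_o/(1+\varepsilon) \in \cP_{(N)}^i$ has Hausdorff error at most $\varepsilon$ as well. Alternatively, one may invoke directly the simultaneously almost-optimal polytopes produced in the proofs of Theorem~\ref{thm:1}~i) and ii), which are precisely the polytopes underlying Corollary~\ref{cor:sim_approx}.

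For v), the Hausdorff bound $j\varepsilon V_j(D_n)$ grows linearly in $j$ and is too crude to reproduce the saturating factor $\min\{1, j\ln n/n\}$ from Theorem~\ref{thm:1}~iii). Here I would rely on the construction from the proof of Theorem~\ref{thm:1}~iii) (via~\cite{Kur:2017}) to produce a single $P \in \cP_N$ satisfying
$$
\Delta_j(D_n, P) \;\leq\; c_4 \min\!\left\{1, \frac{j \ln n}{n}\right\} V_j(D_n)\, N^{-2/(n-1)}
$$
simultaneously for every $j$. Using $\min\{1, j \ln n/n\} \leq j \ln n/n$ pointwise, summation will then yield
$$
\Delta_\Sigma(D_n, P) \;\leq\; c_4 \frac{\ln n}{n}\, \widehat W(D_n)\, N^{-2/(n-1)},
$$
so that v) holds with $c_7 = c_4$. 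The hard part throughout will be this simultaneity claim: verifying that the constructions underlying Theorem~\ref{thm:1} (and in particular those from~\cite{Kur:2017} for v)) produce polytopes near-optimal for all $\Delta_j$ at once, not merely per-$j$. This is the same property that underlies Corollary~\ref{cor:sim_approx}, and I expect no new ideas beyond those to be required.
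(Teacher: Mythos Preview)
Your treatment of parts i)--iv) is essentially correct and matches the paper's approach: lower bounds by summing the per-$j$ bounds from Theorem~\ref{thm:1} and Corollary~\ref{thm:cor1}, upper bounds by exhibiting a single polytope (the volume-best inscribed polytope for i), the Bronshtein--Ivanov polytope for iv)) and transferring to all $V_j$ via the extended isoperimetric inequality. Your Hausdorff route is a valid alternative for i) and iv), though it may not reproduce the exact constants $c_2,c_3$; your fallback of invoking the polytopes from the proofs of Theorem~\ref{thm:1} does.

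For part v) there is a genuine gap. First, the construction in Theorem~\ref{thm:1}~iii) is not from \cite{Kur:2017} (that is used for iv), i.e.\ $\cP_{(N)}$); it is the random polytope $P_N$ from \cite{LSW:2006}, rescaled by a factor $r=r_j$ so that $\E V_j(r_jP_N)=V_j(D_n)$. The crucial point you have not addressed is that this scaling $r_j$ \emph{depends on $j$}, so the polytope achieving the bound in Theorem~\ref{thm:1}~iii) is a different polytope for each $j$. Simultaneity is therefore not automatic, and the argument behind Corollary~\ref{cor:sim_approx} (which concerns $\cP_N^i$ and $\cP_{(N)}^o$, where no rescaling is involved) does not transfer. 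The paper resolves this by fixing a single scaling, namely $r_1$, and then bounding the discrepancy
\[
\Delta_j(D_n, r_1 P_N) \;\leq\; \Delta_j(D_n, r_j P_N) + 2V_j(D_n)\Bigl(1-(r_1/r_j)^j\Bigr),
\]
using that $r_1\leq r_j$ (monotonicity of $\alpha(n,j)$ in $j$) together with the estimate $\alpha(n,j)/\alpha(n,1)-1 = O(\ln n/n)$ to control the second term by $C\,\frac{j\ln n}{n}V_j(D_n)N^{-2/(n-1)}$. This extra step is the ``new idea'' you anticipated not needing.
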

We prove Theorem \ref{willsthm} in Section \ref{WillsThmSection}. 
In Section \ref{sec:stochasticwills} we consider a generalization of Theorem \ref{willsthm} (see Theorem \ref{genwillsthm}) for the stochastic Wills functional, which is an extension of the Wills functional introduced by Vitale \cite{Vitale:1996}. This extension can also be found in \cite{Hadwiger:1975} without the probabilistic notation (see also \cite{Kampf:2009}).

\begin{remark}\label{remarkWills}
	Since $V_j(D_n)=\binom{n}{j}|D_n|/|D_{n-j}|$ (see \eqref{eqn:kubota}), we can express $\widehat W(D_n)$ as
	\begin{equation*}
		\widehat{W}(D_n) = \frac{n |D_n|}{|D_{n-1}|} \sum_{j=1}^n \binom{n-1}{j-1} \frac{|D_{n-1}|}{|D_{n-j}|} = V_1(D_n) W(D_{n-1}).
	\end{equation*}
	Note that $V_1(D_n) = O(\sqrt{n})$ and $W(D_n) \leq \exp(V_1(D_n)) = O\big(\exp(\sqrt{n})\big)$ (see, e.g., \cite{McMullen:1991}).
\end{remark}

We say that a convex body $K\subset \R^n$ \emph{admits a rolling ball (from the inside)} if at every boundary point of $K$ there exists a Euclidean ball contained in $K$ of positive radius that touches $\partial K$ at $x$. As a corollary of results in \cite{BFH:2013, Reitzner:2002}, we  extend the upper bound in Theorem \ref{willsthm} i)  from the ball to all convex bodies with a rolling ball. This result is proven in Section \ref{WillsThmSection}.

\begin{theorem}\label{gilwillthm}
	Let $K\subset \R^n$ be a convex body that admits a rolling ball from the inside. Then
	\begin{equation}\label{gilwill}
		\limsup_{N\to\infty} N^{\frac{2}{n-1}}\, \Delta_\Sigma(K,\cP_N^i(K)) \leq n^{\frac{2}{n-1}} \sum_{j=1}^n
			\beta(n,j) \left(\,\int_{\partial K} H_{n-j}(K,x)^{\frac{n-1}{n+1}} H_{n-1}(K,x)^{\frac{1}{n+1}}\,d\mu_{\partial K}(x)\right)^{\frac{n+1}{n-1}},
	\end{equation}
	where $\beta(n,j)$ is defined by \eqref{eqn:beta} for $j\in\{1,\ldots,n-1\}$, $H_j(K,x)$ is the $j$th normalized elementary symmetric function of the (generalized) principal curvatures of $K$ at $x$, and $\mu_{\partial K}$ is the surface area measure of $K$, i.e., the $(n-1)$-dimensional Hausdorff measure in $\R^n$ restricted to $\partial K$.
\end{theorem}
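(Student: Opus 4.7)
The plan is to reduce the Wills-deviation bound to a sum of intrinsic-volume asymptotics for random inscribed polytopes, combining $n$ distinct random constructions via their convex hull so that a single polytope is simultaneously comparable to the optimal random polytope for each $j$. The cost of this coupling is precisely the factor $n^{2/(n-1)}$. For each $j\in\{1,\dotsc,n\}$, the asymptotic formula of \cite{Reitzner:2002} (in the case $j=n$) and of \cite{BFH:2013} (for the lower intrinsic volumes), after a H\"older-type optimization over the sampling density, produces the optimal density $\phi_j\propto H_{n-j}^{(n-1)/(n+1)}H_{n-1}^{1/(n+1)}$ on $\partial K$ and gives
\begin{equation*}
\lim_{M\to\infty} M^{\frac{2}{n-1}}\,\E\bigl[V_j(K)-V_j(Q_M^{(j)})\bigr]
= \beta(n,j)\left(\int_{\partial K} H_{n-j}(K,x)^{\frac{n-1}{n+1}} H_{n-1}(K,x)^{\frac{1}{n+1}}\,d\mu_{\partial K}(x)\right)^{\frac{n+1}{n-1}},
\end{equation*}
where $Q_M^{(j)}$ is the convex hull of $M$ i.i.d.\ samples from $\phi_j\,d\mu_{\partial K}$.

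To build a common polytope, I would fix $M:=\lfloor N/n\rfloor$ and set
\begin{equation*}
P_N := \conv\Bigl(\bigcup_{j=1}^n Q_M^{(j)}\Bigr),
\end{equation*}
which almost surely lies in $\cP_N^i(K)$, since each $Q_M^{(j)}\subset K$ and the $n$ samples contribute at most $nM\leq N$ points. The crucial observation is the monotonicity of intrinsic volumes under inclusion of convex bodies: from $Q_M^{(j)}\subset P_N\subset K$ one obtains $V_j(K)-V_j(P_N)\leq V_j(K)-V_j(Q_M^{(j)})$ for every $j\in\{1,\dotsc,n\}$. Since $P_N\subset K$, we have $\Delta_\Sigma(K,P_N)=\sum_{j=1}^n(V_j(K)-V_j(P_N))$, and summing the monotonicity estimates almost surely yields
\begin{equation*}
\Delta_\Sigma(K,\cP_N^i(K)) \leq \Delta_\Sigma(K,P_N) \leq \sum_{j=1}^n \bigl(V_j(K)-V_j(Q_M^{(j)})\bigr).
\end{equation*}

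Taking expectations, multiplying by $N^{\frac{2}{n-1}}=(nM)^{\frac{2}{n-1}}(1+o(1))$, and applying the asymptotic formula separately to each of the $n$ summands yields the limsup bound $n^{\frac{2}{n-1}}\sum_{j=1}^n\beta(n,j)\bigl(\int\cdots\bigr)^{\frac{n+1}{n-1}}$, which is exactly the stated estimate \eqref{gilwill}. The factor $n^{\frac{2}{n-1}}$ is precisely the cost of replacing $M=N/n$ by $N$ in each term of the sum.

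The main obstacle I anticipate is technical rather than conceptual: one must verify that the BFH--Reitzner asymptotics of \cite{BFH:2013,Reitzner:2002} are available in the optimized form stated above under only the rolling-ball assumption (rather than $C^2_+$-smoothness of $\partial K$), and that the normalization constant $\beta(n,j)$ of \eqref{eqn:beta} is identified correctly so that the right-hand side matches. The remaining ingredients---monotonicity of intrinsic volumes, dominated convergence to pull the finite sum through the limsup, and control of the rounding error $N/n-\lfloor N/n\rfloor$---are routine and do not affect the leading constant.
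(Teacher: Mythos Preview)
Your proposal is correct and follows essentially the same route as the paper: sample $M=\lfloor N/n\rfloor$ points from each optimal density $\phi_j$, take the convex hull of the union, and use monotonicity of intrinsic volumes together with the Reitzner/B\"or\"oczky--Fodor--Hug asymptotics (which indeed hold under the rolling-ball hypothesis) to bound each summand. The only cosmetic difference is that the paper bounds $\E\Delta_\Sigma(K,P_N)$ first and then passes to a realization, whereas you observe directly that the deterministic minimum $\Delta_\Sigma(K,\cP_N^i(K))$ is bounded almost surely by $\Delta_\Sigma(K,P_N)$ and hence by its expectation; both yield the same conclusion.
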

If $K=D_n$, then the right-hand side of \eqref{gilwill} is asymptotically equal to $\left(\frac{1}{2}+O(\frac{\ln n}{n})\right) \widehat{W}(D_n)$, and therefore is asymptotically equal to the upper bound obtained in Theorem \ref{willsthm} i). Since $\Delta_\Sigma(K,\cP_N)\leq \Delta_\Sigma(K,\cP_N^i(K))$, inequality \eqref{gilwill} also gives a trivial asymptotic upper bound for the Wills deviation of $K$ and an arbitrarily positioned polytope with at most $N$ vertices.


\subsection{Dual volume approximation of convex bodies}

The classical Brunn--Minkowski theory arises from the combination of volume and the Minkowski addition of convex bodies. The dual Brunn--Minkowski theory, introduced by Lutwak \cite{Lutwak:1975, Lutwak:1988, Lutwak:1990}, originates by replacing Minkowski addition with radial addition. Many classical notions from the Brunn--Minkowski theory, such as the Brunn--Minkowski inequality, mixed volumes, surface area and curvature measures, etc., have found ``dual'' analogues in this theory. We refer to \cite{AHH:2018, Bernig:2015, BHP:2018, BLYZ:2013, HLYZ:2016, LYZ:2018, Zhao:2018} for the most recent progress, and to \cite{Gardner:2007, GVV:2003} for a succinct introduction.

The \emph{radial addition} of two convex bodies $K$ and $L$ that contain the origin in their interiors is defined by
\begin{equation*}
	K\, \widetilde{+}\, L := \{ x+y : \text{if $x\in K$ and $y\in L$ are collinear with the origin}\}.
\end{equation*}
The set $K\,\widetilde{+}\,L$ is  a star body, but in general it is not convex. Radial addition gives rise to the \emph{radial (or dual) Steiner formula}
\begin{equation}\label{eqn:radial_steiner}
	|K\,\widetilde{+}\, r D_n| = \sum_{i=0}^n r^j |D_j| \widetilde{V}_{n-j}(K),
\end{equation}
which implicitly defines the \emph{dual volumes} $\widetilde{V}_j$. For other recent generalizations of Steiner's formula,  see \cite{ParpatitsSchuster:2012, TatarkoWerner:2018}. Here we have used the normalization $\widetilde{V}_j(D_n) = V_j(D_n)$ for $\widetilde{V}_j$, which is different from the one considered in \cite{GVV:2003}.
More explicitly,  Lutwak  \cite{Lutwak:1979} established the following ``dual'' to Kubota's formula \eqref{eqn:kubota}:
\begin{equation}\label{eqn:polar_kubota}
	\widetilde{V}_j(K) = \binom{n}{j} \frac{|D_n|}{|D_j||D_{n-j}|} \int_{\Gr_j(\R^n)} |K\cap E|\, d\nu_j(E), \quad \text{for $j\in\{0,1,\dotsc,n\}$}.
\end{equation}
The dual volumes are continuous, rotation-invariant valuations on convex bodies that contain the origin in their interiors. 

We are interested in approximating the dual volume of a convex body by polytopes. Hence, we define the \emph{$j$th dual volume deviation} $\widetilde{\Delta}_j(K,L)$ between two convex bodies $K$ and $L$ that contain the origin in their interiors as
\begin{equation}
	\widetilde{\Delta}_j(K,L) := \widetilde{V}_j(K) + \widetilde{V}_j(L) - 2 \widetilde{V}_j(K\cap L).
\end{equation}
Please note that a notion of dual volume difference was previously considered in \cite{Lv:2010}.

\medskip

To state our next theorem, we need some more notation. Define the weighted curvature measure $\Omega_j$ of a convex body $K$  in $\R^n$ that contains the origin in its interior by
\begin{equation}\label{eqn:dual_curvature_measure}
	\Omega_j(K) := \int_{\partial K} \|x\|^{\frac{(j-n)(n-1)}{n+1}} H_{n-1}(K,x)^{\frac{1}{n+1}} \, d\mu_{\partial K}(x).
\end{equation}
Here $\mu_{\partial K}$ is the surface area measure on the boundary of $K$, i.e., the $(n-1)$-dimensional Hausdorff measure in $\R^n$ restricted to $\partial K$, and $H_{n-1}(K,\cdot)$ denotes the generalized Gauss--Kronecker curvature. Notice that $\Omega_j(K)$ is a weighted version of Blaschke's classical notion of affine surface area $\Omega_n(K)$, which was extended from smooth convex bodies to all convex bodies by Schütt and Werner \cite{SchuttWerner:1990}, and independently by Lutwak \cite{Lutwak:1991} (see also  \cite{Leichtweiss:1986}). The affine surface area $\Omega_n$ is equi-affine invariant, that is, $\Omega_n(AK+x) = \left|\det A\right|^{\frac{n-1}{n+1}} \Omega_n(K)$ for all $A\in \operatorname{GL}(\R^n)$ and $x\in \R^n$, whereas $\Omega_j$ is only rotation invariant for $j\neq n$, i.e., $\Omega_j(RK) = \Omega_j(K)$ for all orthogonal transformations $R$. Furthermore, since $\Omega_n$ is upper semi-continuous (see \cite{Lutwak:1991} and \cite{Ludwig:2001}), it follows that $\Omega_j$ is upper semi-continuous on all convex bodies that contain the origin in their interiors.

\begin{theorem}\label{dualthm}
	Let $K\subset \R^n$ be a convex body of class $C^2$ that contains the origin in its interior and let $j\in\{1,\dotsc,n\}$.
	Furthermore, let $\cC_N$ be either $\cP_N$, $\cP_N^i(K)$, $\cP_{(N)}$, or $\cP_{(N)}^o(K)$, and set 
	\begin{equation}\label{eqn:c_del_div}
		\gamma_{n-1} := \begin{cases}
		    	\ldel_{n-1} & \text{if }\cC_N = \cP_N,\\
		    	\del_{n-1}  & \text{if }\cC_N = \cP_N^i(K),\\
		    	\ldiv_{n-1} & \text{if }\cC_N = \cP_{(N)},\\
		    	\div_{n-1}  & \text{if }\cC_N = \cP_{(N)}^o(K).
		    \end{cases}
	\end{equation}
	Then
	\begin{equation*}
		\lim_{N\to \infty} N^{\frac{2}{n-1}}\,\widetilde{\Delta}_j(K,\cC_N) 
			= \frac{\gamma_{n-1}}{2} \frac{j V_j(D_n)}{n|D_n|}\, \Omega_j(K)^{\frac{n+1}{n-1}}.
	\end{equation*}
\end{theorem}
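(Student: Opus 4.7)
The plan is to reduce $\widetilde{\Delta}_j(K,P)$ to a symmetric volume with continuous weight $\|x\|^{j-n}$ and then invoke the classical asymptotic formulas for weighted-volume best approximation of $C^2$ convex bodies by polytopes.

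First, setting $L=D_n$ in the radial Steiner formula \eqref{eqn:radial_steiner} and expanding $|K\,\widetilde{+}\,rD_n| = \tfrac{1}{n}\int_{\Sp}(\rho_K(u)+r)^n\,du$ via the binomial theorem yields the polar representation
\begin{equation*}
\widetilde{V}_j(K) \;=\; \frac{1}{n|D_{n-j}|}\binom{n}{j}\int_{\Sp}\rho_K(u)^j\,du.
\end{equation*}
Since $\rho_{K\cap P}=\min(\rho_K,\rho_P)$, the pointwise identity $a^j+b^j-2\min(a,b)^j = j\bigl|\int_a^b t^{j-1}\,dt\bigr|$ combined with the polar change of variables $dx=t^{n-1}dt\,du$ (so that $t^{j-1}dt\,du = \|x\|^{j-n}dx$) gives the key identity
\begin{equation*}
\widetilde{\Delta}_j(K,P) \;=\; \frac{j\binom{n}{j}}{n|D_{n-j}|}\int_{K\triangle P}\|x\|^{j-n}\,dx \;=\; \frac{jV_j(D_n)}{n|D_n|}\int_{K\triangle P}\|x\|^{j-n}\,dx,
\end{equation*}
where the last equality uses Kubota's identity $V_j(D_n)=\binom{n}{j}|D_n|/|D_{n-j}|$. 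Because the origin lies in the interior of $K$, the weight $f(x):=\|x\|^{j-n}$ is continuous, bounded, and bounded away from zero on any fixed neighborhood of $\partial K$.

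Hence $\widetilde{\Delta}_j(K,\cC_N)$ is, up to the constant $\tfrac{jV_j(D_n)}{n|D_n|}$, the weighted-volume best approximation $\min_{P\in\cC_N}\int_{K\triangle P}f\,dx$, and Theorem \ref{dualthm} reduces to the Gruber--Ludwig--Glasauer--Gruber asymptotic for such quantities. For a $C^2$ convex body $K$ and a continuous positive weight $f$ near $\partial K$, this asymptotic reads
\begin{equation*}
\lim_{N\to\infty} N^{2/(n-1)}\min_{P\in\cC_N}\int_{K\triangle P} f\,dx \;=\; \frac{\gamma_{n-1}}{2}\left(\int_{\partial K} f(x)^{(n-1)/(n+1)}\,H_{n-1}(K,x)^{1/(n+1)}\,d\mu_{\partial K}(x)\right)^{(n+1)/(n-1)},
\end{equation*}
with the constant $\gamma_{n-1}\in\{\del_{n-1},\div_{n-1},\ldel_{n-1},\ldiv_{n-1}\}$ matching $\cC_N\in\{\cP_N^i(K),\cP_{(N)}^o(K),\cP_N,\cP_{(N)}\}$ as in \eqref{eqn:c_del_div}. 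Substituting $f(x)=\|x\|^{j-n}$ makes the boundary integrand equal to $\|x\|^{(j-n)(n-1)/(n+1)} H_{n-1}(K,x)^{1/(n+1)}$, so the boundary integral is exactly $\Omega_j(K)$ by \eqref{eqn:dual_curvature_measure}; combining with the prefactor produces the claimed limit.

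The main obstacle is obtaining the weighted asymptotic in the exact form needed for all four classes of $\cC_N$ with the correct constants $\gamma_{n-1}$. For the inscribed and circumscribed cases the weighted version is an essentially verbatim adaptation of Gruber's and Glasauer--Gruber's arguments (the local paraboloid approximation is governed by the same extremal cap problem, and the weight contributes only through its boundary value raised to the power $(n-1)/(n+1)$); for the free-vertex and free-facet classes one follows Ludwig's general-position analysis, whose extremal simplex problem produces the constants $\ldel_{n-1}$ and $\ldiv_{n-1}$. A mild technical point is that $f(x)=\|x\|^{j-n}$ is singular at the origin, but near-minimizing polytopes converge to $K$ in Hausdorff distance as $N\to\infty$ and therefore eventually lie in a fixed neighborhood of $K$ on which $f$ is smooth and positive, so the classical weighted framework applies on the relevant region of integration.
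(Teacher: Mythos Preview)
Your proposal is correct and follows essentially the same route as the paper: reduce $\widetilde{\Delta}_j(K,P)$ to the weighted symmetric-difference integral $\tfrac{jV_j(D_n)}{n|D_n|}\int_{K\triangle P}\|x\|^{j-n}\,dx$ (the paper's Lemma~\ref{lem:dual_to_weighted}), then invoke Ludwig's weighted-volume asymptotic (Theorem~\ref{thm:weighted_approx}) with $\psi(x)=\|x\|^{j-n}$. The only difference is cosmetic: where you note that near-minimizers eventually avoid a neighborhood of the origin, the paper makes this rigorous by replacing $\|x\|^{j-n}$ with an explicit truncation $\psi_\varepsilon$ and showing that for large $N$ the minimization over $\cC_N$ is unchanged because any $P$ not containing $\varepsilon D_n$ has $\widetilde{\Delta}_j(K,P)$ bounded below by a positive constant.
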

\noindent Please see Subsection \ref{sec:Del_Div} for more information on the dimensional constant $\gamma_{n-1}$.

\smallskip

We prove Theorem \ref{dualthm} in Section \ref{sec:dual_thm} by relating the problem to the weighted volume best approximation 
of convex bodies as considered in \cite{GlasauerGruber:1997, Ludwig:1999}.
In Theorem \ref{thm:dualthm_gen} below, we
extend this result from $j\in\{1,\dotsc,n\}$ to all $q\in \R$ by considering the natural analytic extension of the 
dual volumes from $\widetilde{V}_j$ to $\widetilde{V}_q$.

\medskip
Random approximation of convex bodies with respect to the intrinsic volumes has been considered before in \cite{Affentranger:1991, BFH:2013, Reitzner:2002}.
By an extension \cite[Satz 10.1]{Ziebarth:2014} of the random approximation results in \cite{BFH:2013} to weighted volumes (see Theorem \ref{thm:random_weighted_approx} below), we derive the following random approximation results for the dual volumes.

\begin{theorem}\label{thm:dual_optimal}
	Let $K$ be a convex body that admits a rolling ball and contains the origin in its interior. Choose $N$ points $X_1,\dotsc,X_N$  at random from $\partial K$ independently and according to the probability density function $\widetilde{\psi}_j:\partial K\to (0, \infty)$ defined by
	\begin{equation*}
		\widetilde{\psi}_j(x) := \frac{1}{\Omega_j(K)} \|x\|^{\frac{(j-n)(n-1)}{n+1}} H_{n-1}(K,x)^{\frac{1}{n+1}}
	\end{equation*}
	for all $x\in \partial K$ where $H_{n-1}(K,x)$ is defined, and set $\widetilde{\psi}_j(x)=0$ otherwise.
	\begin{enumerate}
		\item[i)] Set $P_N^{\widetilde{\psi}_j}:=\conv\{X_1,\dotsc,X_N\}$.
			Then
			\begin{equation}\label{eqn:random_weighted_approx_optimal}
				\lim_{N\to \infty} N^{\frac{2}{n-1}}\, \E\widetilde{\Delta}_j(K,P_N^{\widetilde{\psi}_j}) 
					=  \frac{\beta(n,n)}{n|D_n|} j V_j(D_n) \Omega_j(K)^{\frac{n+1}{n-1}}.
			\end{equation}
		\item[ii)] Assume further that $K$ is of class $C^2$ and set $P_{(N)}^{\widetilde{\psi}_j} := \bigcap_{i=1}^N H_i^-$, 
			where $H_i^-$ is the closed supporting halfspace of $K$ at $X_i$ that contains $K$. Then for any convex body $L$ that contains $K$ in its interior,
			\begin{equation}
				\lim_{N\to \infty} N^{\frac{2}{n-1}}\, \E\widetilde{\Delta}_j(K,P_{(N)}^{\widetilde{\psi}_j} \cap L) 
					= \frac{\beta(n,1)}{V_1(D_n)} j V_j(D_n) \Omega_j(K)^{\frac{n+1}{n-1}}.
			\end{equation}
			
	\end{enumerate}
\end{theorem}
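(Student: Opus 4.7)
The plan is to reduce both assertions to weighted Lebesgue-volume random approximation and then invoke Theorem \ref{thm:random_weighted_approx} directly. The key step is to establish the identity
\begin{equation*}
    \widetilde{\Delta}_j(K,L) \;=\; \frac{jV_j(D_n)}{n|D_n|}\int_{K\triangle L} \|x\|^{j-n}\,dx
\end{equation*}
for any convex bodies $K,L\subset\R^n$ containing the origin in their interiors, where $K\triangle L$ denotes the symmetric difference. To derive this I would start from Lutwak's formula \eqref{eqn:polar_kubota}: writing $|K\cap E|$ in polar coordinates on each subspace $E\in\Gr_j(\R^n)$ and then exchanging for Cartesian coordinates on $\R^n$ gives, after a direct computation, $\widetilde{V}_j(K) = \frac{jV_j(D_n)}{n|D_n|}\int_K\|x\|^{j-n}dx$. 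Since the right-hand side is just $\|x\|^{j-n}\,dx$ regarded as an absolutely continuous measure restricted to $K$, inclusion--exclusion yields the identity. In particular, when $P\subset K$ one has $\widetilde{\Delta}_j(K,P) = \frac{jV_j(D_n)}{n|D_n|}\int_{K\setminus P}\|x\|^{j-n}dx$, and symmetrically when $P\supset K$.

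With this reduction, part (i) becomes an instance of Theorem \ref{thm:random_weighted_approx} for inscribed random polytopes with weight $f(x)=\|x\|^{j-n}$ and sampling density $\psi=\widetilde{\psi}_j$. The limit supplied by that theorem takes the form
\begin{equation*}
    \lim_{N\to\infty} N^{\frac{2}{n-1}}\,\E\!\int_{K\setminus P_N^{\psi}} f(x)\,dx
    \;=\; \beta(n,n) \int_{\partial K} f(x)\, H_{n-1}(K,x)^{\frac{1}{n-1}}\,\psi(x)^{-\frac{2}{n-1}}\,d\mu_{\partial K}(x).
\end{equation*}
Substituting the explicit form of $\widetilde{\psi}_j$, the exponents of $\|x\|$ collapse to $(j-n)(n-1)/(n+1)$ and those of $H_{n-1}(K,\cdot)$ to $1/(n+1)$, so the integrand becomes $\Omega_j(K)^{2/(n-1)}$ times the defining integrand of $\Omega_j(K)$ from \eqref{eqn:dual_curvature_measure}. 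The integral thus evaluates to $\Omega_j(K)^{(n+1)/(n-1)}$, and multiplying by the prefactor $jV_j(D_n)/(n|D_n|)$ from the key identity yields exactly \eqref{eqn:random_weighted_approx_optimal}. In effect, $\widetilde{\psi}_j$ is (up to normalization) the density $\bigl(fH_{n-1}^{1/(n-1)}\bigr)^{(n-1)/(n+1)}$ familiar from Hölder-type optimality arguments for weighted-volume random approximation.

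For part (ii) I would apply the circumscribed version of Theorem \ref{thm:random_weighted_approx}, whose limit has the same general shape as above but with the prefactor $\beta(n,n)/(n|D_n|)$ replaced by $\beta(n,1)/V_1(D_n)$; repeating the same simplification of the density then yields the stated formula. The additional subtlety is that $P_{(N)}^{\widetilde{\psi}_j}$ need not be bounded, which is why we intersect with a fixed $L\supset K$ containing $K$ in its interior; as in \cite{BFH:2013,Reitzner:2002}, the asymptotically relevant facets of $P_{(N)}^{\widetilde{\psi}_j}$ lie close to $\partial K$, so the clipping by $\partial L$ contributes only $o\bigl(N^{-2/(n-1)}\bigr)$ and does not affect the limit. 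Thus the main technical point is the key identity, after which everything follows from the weighted-volume random-approximation machinery; the $C^2$ and rolling-ball hypotheses of the theorem ensure both the applicability of Theorem \ref{thm:random_weighted_approx} and the boundedness of $\widetilde{\psi}_j$ away from $0$ and $\infty$ needed there.
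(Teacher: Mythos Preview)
Your proposal is correct and follows essentially the same route as the paper: the key identity you state is exactly Lemma~\ref{lem:dual_to_weighted}, and the two parts then reduce to Theorem~\ref{thm:random_weighted_approx} and Theorem~\ref{thm:circumscribed} with the optimal density $\widetilde{\psi}_j$ given by \eqref{eqn:optimal_psi}. One small technical point you gloss over: for $j<n$ the weight $\psi(x)=\|x\|^{j-n}$ is unbounded at the origin, so the hypothesis $\sup_K\psi<\infty$ of Theorem~\ref{thm:random_weighted_approx} is not literally satisfied; the paper deals with this (see the proof of Theorem~\ref{thm:dualthm_gen} and Subsection~\ref{sec:dualWills}) by truncating $\psi$ on a small ball $\varepsilon D_n\subset\operatorname{int}K$, which leaves the boundary integral and hence the limit unchanged.
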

This result follows from Theorem \ref{thm:random_weighted_approx}, Theorem \ref{thm:circumscribed} and Lemma \ref{lem:dual_to_weighted}. Please see  Subsection \ref{sec:dualWills} for more details.

\begin{remark}
	More generally, a similar limit theorem holds in Theorem \ref{thm:dual_optimal} for any continuous probability density $\varphi:\partial K \to (0, \infty)$. However, by Hölder's inequality it follows that $\widetilde{\psi}_j$ is the optimal density, i.e., the right-hand side of \eqref{eqn:random_weighted_approx_optimal} is minimal for $\widetilde{\psi}_j$ (see Subsection \ref{sec:dualWills}).
	In particular, we find that
	\begin{equation}\label{eqn:random_approx_dual}
		\lim_{N\to\infty} \frac{\E \widetilde{\Delta}_{j}(K,P_N^{\widetilde{\psi}_j})}{\widetilde{\Delta}_{j}(K,\cP_N^i(K))} 
		= 1 + O\left(\frac{\ln n}{n}\right),\qquad
		\lim_{N\to\infty} \frac{\E \widetilde{\Delta}_{j}(K,P_{(N)}^{\widetilde{\psi}_j}\cap L)}{\widetilde{\Delta}_{j}(K,\cP_{(N)}^o(K))} 
		= 1 + O\left(\frac{\ln n}{n}\right).
	\end{equation}
	Thus, also in the dual setting we see that best and random approximation are asymptotically equivalent in high dimensions. 
\end{remark}
\noindent The proof of this remark is given  in Appendix \ref{sec:asymptotic_estimates}; see  \eqref{eqn:random_approx_dual_estim} and \eqref{eqn:asymtotics_random_weighted_circum_estim}.

\medskip
We also motivate the definition of a \emph{dual Wills functional} $\dW$ for a convex body $K\subset \R^n$ that contains the origin in its interior by
\begin{equation}
	\dW(K):= \sum_{j=0}^n \dV_j(K),
\end{equation}
and we define the \emph{dual Wills deviation} by
\begin{equation}
	\widetilde\Delta_\Sigma(K,L) := \dW(K)+ \dW(L)-2 \dW(K\cap L) = \sum_{j=1}^n\widetilde{\Delta}_j(K,L)
\end{equation}
for all convex bodies $K$ and $L$ that contain the origin in their interiors.

\begin{theorem}\label{dualwillscor}
	Let $K$ be a convex body of class $C^2$ that contains the origin in its interior. Furthermore, let $\cC_N$ be either $\cP_N$, $\cP_N^i(K)$, $\cP_{(N)}$, or $\cP_{(N)}^o(K)$, and set $\gamma_{n-1}$ as in \eqref{eqn:c_del_div}. Then
	\begin{equation}\label{eqn:dualWills_lower}
		\liminf_{N\to \infty} N^{\frac{2}{n-1}} \, \widetilde{\Delta}_{\Sigma}(K,\cC_N) 
			\geq \frac{\gamma_{n-1}}{2n|D_n|} \sum_{j=1}^n j V_j(D_n)\, \Omega_j(K)^{\frac{n+1}{n-1}}.
	\end{equation}
	Moreover, if $K$ is a convex body that admits a rolling ball from the inside and contains the origin in its interior, then
	\begin{equation} \label{eqn:dualWills_upper}
		\limsup_{N\to \infty} N^{\frac{2}{n-1}} \, \widetilde{\Delta}_{\Sigma}(K,\cP_N^i(K)) 
			\leq n^{\frac{2}{n-1}}\frac{\beta(n,n)}{n|D_n|} \sum_{j=1}^n j V_j(D_n)\, \Omega_j(K)^{\frac{n+1}{n-1}},
	\end{equation}
	and if $K$ is a convex body of class $C^2$ and contains the origin in its interior, then
	\begin{equation}\label{eqn:dualWills_upper_circumscribed}
		\limsup_{N\to \infty} N^{\frac{2}{n-1}} \, \widetilde{\Delta}_{\Sigma}(K,\cP_N^o(K)) 
			\leq n^{\frac{2}{n-1}}\frac{\beta(n,1)}{V_1(D_n)} \sum_{j=1}^n j V_j(D_n)\, \Omega_j(K)^{\frac{n+1}{n-1}}.
	\end{equation}
\end{theorem}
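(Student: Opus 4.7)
The lower bound \eqref{eqn:dualWills_lower} is the easy direction. Since $\widetilde{\Delta}_\Sigma(K,P) = \sum_{j=1}^n \widetilde{\Delta}_j(K,P)$ for every polytope $P$, the minimum of a sum dominates the sum of minima:
\begin{equation*}
	\widetilde{\Delta}_\Sigma(K,\cC_N) \;=\; \min_{P\in\cC_N} \sum_{j=1}^n \widetilde{\Delta}_j(K,P) \;\geq\; \sum_{j=1}^n \widetilde{\Delta}_j(K,\cC_N).
\end{equation*}
Multiplying by $N^{2/(n-1)}$, taking $\liminf$ and using $\liminf\sum\geq\sum\liminf$, the bound follows termwise from Theorem~\ref{dualthm}.

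For the inscribed upper bound \eqref{eqn:dualWills_upper}, the plan is to build a single random polytope that is near-optimal for all dual volumes at once via a ``mixed density'' sampling scheme. Write $N_j := \lfloor N/n \rfloor$ (adjusting one index so that $\sum_j N_j = N$), and for each $j\in\{1,\ldots,n\}$ sample $N_j$ i.i.d.\ points on $\partial K$ from the density $\widetilde{\psi}_j$ of Theorem~\ref{thm:dual_optimal} i). Set $P_N$ to be the convex hull of all $N$ points; then $P_N\in\cP_N^i(K)$ and $P_N$ contains the sub-polytope $P_{N_j}^{\widetilde{\psi}_j}$ formed by the $j$-th batch alone. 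By the monotonicity of dual volumes (immediate from the dual Kubota formula \eqref{eqn:polar_kubota}), together with $P_N\cap K=P_N$ and $P_{N_j}^{\widetilde{\psi}_j}\cap K=P_{N_j}^{\widetilde{\psi}_j}$,
\begin{equation*}
	\widetilde{\Delta}_j(K,P_N) \;=\; \widetilde{V}_j(K)-\widetilde{V}_j(P_N) \;\leq\; \widetilde{V}_j(K)-\widetilde{V}_j\bigl(P_{N_j}^{\widetilde{\psi}_j}\bigr) \;=\; \widetilde{\Delta}_j\bigl(K,P_{N_j}^{\widetilde{\psi}_j}\bigr).
\end{equation*}
Summing over $j$, taking expectations, and using $\widetilde{\Delta}_\Sigma(K,\cP_N^i(K))\leq \E\,\widetilde{\Delta}_\Sigma(K,P_N)$, the inscribed case of Theorem~\ref{thm:dual_optimal} applied termwise gives the limsup bound, with the extra factor $n^{2/(n-1)}$ arising from $N^{2/(n-1)}/N_j^{2/(n-1)}\to n^{2/(n-1)}$.

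For the circumscribed upper bound \eqref{eqn:dualWills_upper_circumscribed}, the dual construction goes through: fix an auxiliary convex body $L$ with $K\subset\intt L$, sample $n$ batches of $N_j=\lfloor N/n\rfloor$ points on $\partial K$ with densities $\widetilde{\psi}_j$, and take
\begin{equation*}
	Q_N \;:=\; \bigcap_{j=1}^n P_{(N_j)}^{\widetilde{\psi}_j},
\end{equation*}
which contains $K$ and has at most $\sum_j N_j = N$ facets. Monotonicity of dual volumes applied to $K\subset Q_N\subset P_{(N_j)}^{\widetilde{\psi}_j}$ yields, on the event $\{Q_N\subset L\}$, the inequality $\widetilde{\Delta}_j(K,Q_N)\leq \widetilde{\Delta}_j(K,P_{(N_j)}^{\widetilde{\psi}_j}\cap L)$, and one then invokes Theorem~\ref{thm:dual_optimal} ii) termwise. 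The main obstacle is this last step: unlike in the inscribed case, the natural bound $\widetilde{\Delta}_j(K,Q_N)\leq \widetilde{\Delta}_j(K,P_{(N_j)}^{\widetilde{\psi}_j}\cap L)$ is only valid once the random circumscribed polytopes $P_{(N_j)}^{\widetilde{\psi}_j}$ actually fit inside $L$, and one must control the probability of the complementary event and the contribution of those configurations to the expectation. The standard route is to enlarge $L$ with $N$ (say $L=L_N$ with radius growing slowly) and use the known concentration of random circumscribed polytopes around $\partial K$ under the assumptions of Theorem~\ref{thm:dual_optimal} ii) to show that the bad event contributes negligibly to $N^{2/(n-1)}\E\widetilde{\Delta}_\Sigma$; the rounding in $N_j$ is handled as in the inscribed case, and one again picks up the factor $n^{2/(n-1)}$.
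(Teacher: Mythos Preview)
Your proof is essentially the same as the paper's. The lower bound via ``minimum of a sum $\geq$ sum of minima'' plus Theorem~\ref{dualthm} is exactly what the paper does, and for the inscribed upper bound the paper uses precisely your mixed-batch construction: draw $\lfloor N/n\rfloor$ points from each optimal density $\widetilde{\psi}_j$, take the convex hull of the union, and use monotonicity of $\widetilde{V}_j$ together with the containment $P^{\widetilde{\psi}_j}_{\lfloor N/n\rfloor}\subset P_N$.

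The only place you diverge is the circumscribed case, and there you make the argument harder than it needs to be. The paper keeps a \emph{fixed} auxiliary body $L=K+D_n$ and works directly with $P_{(N)}\cap L$ rather than with $P_{(N)}$ itself. Since $K\subset P_{(N)}\cap L\subset P^{\widetilde{\psi}_j}_{(\lfloor N/n\rfloor)}\cap L$ holds \emph{deterministically}, the monotonicity bound
\[
\widetilde{\Delta}_j\bigl(K,P_{(N)}\cap L\bigr)\;\leq\;\widetilde{\Delta}_j\bigl(K,P^{\widetilde{\psi}_j}_{(\lfloor N/n\rfloor)}\cap L\bigr)
\]
is valid on the whole probability space, with no bad event to excise; Theorem~\ref{thm:dual_optimal}~ii) (equivalently Theorem~\ref{thm:circumscribed}) then applies termwise to the right-hand side exactly as stated. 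The only remaining point is that $P_{(N)}\cap L$ need not lie in $\cP_{(N)}^o(K)$; but $\Pr(P_{(N)}\subset L)\to 1$, and on that event $P_{(N)}\cap L=P_{(N)}\in\cP_{(N)}^o(K)$, so one extracts a suitable realization in the usual way. There is no need to let $L=L_N$ grow with $N$, and note that the relevant containment is $Q_N\subset L$, not the stronger $P_{(N_j)}^{\widetilde{\psi}_j}\subset L$ you mention.
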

We prove  Theorem \ref{dualwillscor} in Subsection \ref{sec:dualWills} using an extension of results in \cite{BFH:2013} for the weighted random approximation of convex bodies. Please see Theorem \ref{thm:random_weighted_approx} below for more details.

\begin{remark}
	For the inscribed case $\cC_{N}=\cP_{N}^i(K)$ we have
	\begin{equation}\label{eqn:asymtotics_random_weighted}
		\frac{\del_{n-1}}{2n|D_n|} = n^{\frac{2}{n-1}} \frac{\beta(n,n)}{n|D_n|} \left(1+O\left(\frac{\ln n}{n}\right)\right),
	\end{equation}
	and for the circumscribed case $\cC_{N} = \cP_{(N)}^o(K)$ we have
	\begin{equation}\label{eqn:asymtotics_random_weighted_circum}
		\frac{\div_{n-1}}{2n|D_n|} = n^{\frac{2}{n-1}} \frac{\beta(n,1)}{V_1(D_n)} \left(1+O\left(\frac{\ln n}{n}\right)\right).
	\end{equation}
	Therefore, the lower and upper bounds in Theorem \ref{dualwillscor} are almost equal in high dimensions; see \eqref{eqn:beta_estim_ok} and \eqref{eqn:asymtotics_random_weighted_circum_estim} in Appendix \ref{sec:asymptotic_estimates}  for the details.
\end{remark}


\subsection{Comparison with known results}\label{comparison1}

To the best of our knowledge, this paper is the  first to give estimates for intrinsic volume approximation by arbitrarily positioned polytopes. It is also the first to give asymptotically sharp lower bounds for the best inscribed and circumscribed approximations of the ball. In particular, the inscribed result shows that the random construction of Affentranger \cite{Affentranger:1991} is optimal up to a term of order $O(\frac{\ln n}{n})$.  Thus, as the dimension tends to infinity, random approximation of the ball by inscribed polytopes is as good as best approximation under the intrinsic volume deviation; see Corollary \ref{cor:AffentrangerRemark}. The main results of this paper address questions of Gruber, who asked for estimates on asymptotic best approximation of convex bodies by polytopes with respect to intrinsic volumes (\cite{Gruber:2007}, p. 216).  Most of the bounds in Theorem \ref{thm:1} were previously known only for $j\in\{1,n-1,n\}$. More specifically:

\begin{enumerate}
	\item For $j\in \{1,\dotsc,n\}$ and $\cC_N\in\{\cP_N^i(K), \cP_{(N)}^o(K)\}$, one may use the Steiner formula and results on approximation of convex bodies under the Hausdorff distance \cite{Schneider:1987} to obtain an upper bound for general convex bodies; see \cite{GlasauerGruber:1997}. In particular, by this argument one may obtain the upper bound in Theorem \ref{thm:1} i). The upper bound for  $\cC_N = \cP_N^i$ and $j\in \{1,\dotsc,n\}$ also follows from \cite[Thm.\ 5]{Affentranger:1991}.
	
	\item For $j=n$ and $\cC_N\in\{\cP_{N}, \cP_N^i(K),\cP_{(N)}, \cP_{(N)}^o(K)\}$, the limit theorems in \cite{Boroczky:2000a, Gruber:1993a, Ludwig:1999} for the symmetric volume difference of convex bodies of class $C^2$, together with estimates for the dimensional constants in \cite{Boroczky:2000b, HoehnerKur:2018, Kur:2017, LSW:2006, MankiewiczSchutt:2000, MankiewiczSchutt:2001}, imply precise bounds for the ball.
	
	\item For $j=n$ and $\cC_N=\cP_{(N)}^i$, the upper bound in Theorem \ref{thm:1} ii) was established in \cite{GMR:1994}. 
	
	\item For $j=n-1$ and $\cC_N=\cP_N$, an upper bound for convex bodies of class $C^2_+$ was  recently established in \cite{GTW:2018}. Previously, an upper bound for  $j=n-1$ and $\cC_N=\cP_N$ was given in \cite{HSW:2018} for the special case of the ball.
	
	\item For $j=n-1$ and $\cC_N=\cP_{(N)}$, a lower bound was established in \cite{HSW:2018} and an upper bound was established in \cite{Kur:2017}.  

	\item For $j=1$, the intrinsic volume deviation $\Delta_1$ is related to the $L^1$ metric $\delta_1$ defined by
	\begin{equation*}
		\delta_1(K,L) = \|h_K-h_L\|_{L^1(\S^{n-1})} = \int_{\S^{n-1}} \left|h_K(u) - h_L(u) \right|\, d\sigma(u),
	\end{equation*}
	where $\S^{n-1}=\partial D_n$ is the unit sphere, $\sigma$ is the uniform probability measure on $\S^{n-1}$ and $h_K(u) = \max\{ x\cdot u : x\in K\}$ is the support function of $K$. We have $\Delta_1(K,L) \geq V_1(D_n)\delta_1(K,L)$ with equality if and only if $K\cup L$ is convex; see Theorem \ref{thm:delta_1} in Appendix \ref{sec:width_rel}. Hence, for $\cC_N\in\{\cP_N^i(K),\cP_{(N)}^o(K)\}$, the limit theorems  in \cite{Boroczky:2000a,GlasauerGruber:1997} for the approximation of $C^2$ convex bodies by polytopes under $\delta_1$, together with the estimates for the dimensional constants in \cite{HoehnerKur:2018, Kur:2017, MankiewiczSchutt:2000,MankiewiczSchutt:2001},  
	yield precise bounds also for $\Delta_1$.

	However, for $\cC_N\in\{\cP_N, \cP_{(N)}\}$ one only has $\Delta_1\geq V_1(D_n)\delta_1$ in general, and therefore we have to distinguish between the two notions. For $\delta_1$, limit theorems were obtained by Ludwig \cite{Ludwig:1999} and estimates for the dimensional constants can be found in \cite{Kur:2017, LSW:2006}. By a result of Eggleston \cite{Eggleston:1957}, it follows that in the plane $\R^2$ we have $\Delta_1(D_2,\cP_N) = \Delta_1(D_2,\cP_N^i)$, i.e., a polygon with $N$ vertices is best-approximating for the unit disk if and only if it is inscribed. In particular, this yields $\Delta_1(D_2,\cP_N) > V_1(D_n) \delta_1(D_2,\cP_N)$ for all $N\geq 3$. Very recently, Fodor \cite{Fodor:2019} proved an analogue of Eggleston's result for the hyperbolic plane $\mathbb{H}^2$, and showed that it fails on the sphere $\S^2$.	
\end{enumerate}

To the best of our knowledge, polytopal approximation of convex bodies with respect to the Wills functional and stochastic Wills functional have not been considered before, and our asymptotic bounds in the inscribed and circumscribed cases are optimal, up to absolute constants. Furthermore, approximation with respect to  dual volumes also appears to be new and, as we show, is strongly tied to best and random approximation of convex bodies with respect to weighted volumes as considered in \cite{BFH:2013, GlasauerGruber:1997, Ludwig:1999, Reitzner:2002, SchuttWerner:2003}.


\section{Preliminaries}\label{sec:preliminaries}

As a general reference on convex bodies, we refer to the monographs \cite{Gardner:2006, Gruber:2007, Schneider:2014}. In the following, we  collect the necessary notions and classical results on best and random approximation needed in our proofs.

\paragraph{Notation and Definitions}

We shall work in $n$-dimensional Euclidean space $\R^n$, $n\geq 2$, equipped with inner product $ x\cdot y=\sum_{i=1}^n x_i y_i$ and Euclidean norm $\|x\|=\sqrt{x \cdot x}$. 
For $n\in\N$, we set $[n]:=\{1,\ldots,n\}$. 

A \emph{convex body} is a convex and compact subset of $\R^n$ with nonempty interior.  
We write $\mathcal{K}(\R^n)$ for the space of convex bodies in $\R^n$ endowed with the Hausdorff metric, and $\cK_0(\R^n)$ denotes the subspace of convex bodies in $\R^n$ that contain the origin in their interiors. 
The boundary of a convex body $K$ is denoted $\partial K$, and $\mu_{\partial K}$ denotes the surface area measure of $K$. 
The $n$-dimensional volume of $K$ is $|K|$, and its surface area is denoted $|\partial K| = \mu_{\partial K}(\partial K)$.

The Euclidean unit ball is the set $D_n=\{x\in\R^n: \|x\|\leq 1\}$. Its boundary $\partial D_n$ is the unit sphere $\Sp$, and $\sigma$ denotes the uniform probability measure on $\Sp$, i.e., $\sigma = \frac{\mu_{\partial D_n}}{|\partial D_n|}$. 
The volume of the Euclidean unit ball is $|D_n| = \pi^{\frac{n}{2}}/\Gamma(1+\frac{n}{2})$, and $|\partial D_n|=n|D_n|$. The following asymptotic estimate is used frequently (see Appendix \ref{sec:asymptotic_estimates} \eqref{eqn:partial_Dn}):
\begin{equation}\label{eqn:asymptotic_Dn}
	|\partial D_n|^{\frac{2}{n-1}} = \frac{2\pi e}{n} \left(1+O\left(\frac{1}{n}\right)\right).
\end{equation}
The support function  $h_K:\Sp\to\R$ of $K\in\cK(\R^n)$ is defined by $h_K(u)=\max_{x\in K} x\cdot u$, where $u\in\Sp$. The polar body $K^\circ$ of $K$ is defined by $K^\circ=\{y\in\R^n : \max_{x\in K} x\cdot y \leq 1\}$.


\subsection{Background on intrinsic volumes}\label{backgroundIVs}

The intrinsic volumes $V_1(K),\ldots,V_n(K)$ of a convex body $K\in\cK(\R^n)$ are completely determined by Steiner's formula (see, e.g.,\ \cite[Eqn.\ (4.1)]{Schneider:2014})
\begin{equation}\label{eqn:steiner}
	|K+r D_n| = \sum_{j=0}^n r^{j} |D_j| V_{n-j}(K), \quad \forall r\geq 0.
\end{equation}
Note that $K+r D_n$ is the set of all points with distance at most $r>0$ from $K$, i.e.,
	$K+r D_n =\{x\in\R^n: \dist(x,K)\leq r\}$
where $\dist(x,K)=\min\{\|x-y\|: y\in K\}$. In particular, 
	$V_j(D_n) = \binom{n}{j} \frac{|D_n|}{|D_{n-j}|}$.

The intrinsic volumes are monotonic, i.e., if $K\subset L$ then $V_j(K) \leq V_j(L)$. Thus, $\Delta_j (K,L) \geq 0$ for all $K,L\in\cK(\R^n)$. The first intrinsic volume $V_1(K)$ of $K$ is called the \emph{intrinsic width} of $K$, and Kubota's integral formula \eqref{eqn:kubota} yields
\begin{equation}
	V_1(K) = V_1(D_n) \int_{\Sp}h_K(u)\,d\sigma(u).
\end{equation}

The famous \emph{Alexandrov--Fenchel inequalities} for mixed volumes imply the following well-known inequalities.
\begin{theorem}[Isoperimetric inequalities for intrinsic volumes] \label{extended_isoperimetric}
	Let $K\in\cK(\R^n)$.
	\begin{enumerate}
		\item[i)] Extended isoperimetric inequality: For any $j\in[n]$,
			\begin{equation}\label{eqn:extended_isoperimetric}
				\left(\frac{|K|}{|D_n|}\right)^{\frac{1}{n}}
				\leq \cdots
				\leq \left(\frac{V_j(K)}{V_j(D_n)}\right)^{\frac{1}{j}} 
				\leq \cdots
				\leq \frac{V_1(K)}{V_1(D_n)}.
			\end{equation}
			Equality holds for any one of the inequalities, and then throughout all of them, if and only if $K$ is a Euclidean ball, 
			i.e., $K=r D_n+x$ for some  $r> 0$ and $x\in\R^n$.
			
		\item[ii)] The sequence $\{V_j(K)/V_j(D_n)\}_{j=0}^n$ is log-concave, i.e., for all $j\in[n-1]$,
			\begin{equation}\label{eqn:Alexandrov-Fenchel}
				\left(\frac{V_j(K)}{V_j(D_n)}\right)^2 \geq \frac{V_{j-1}(K)}{V_{j-1}(D_n)} \cdot \frac{V_{j+1}(K)}{V_{j+1}(D_n)}.
			\end{equation}
	\end{enumerate}
\end{theorem}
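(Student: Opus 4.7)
The plan is to deduce both claims from the Alexandrov--Fenchel inequality via the standard representation of intrinsic volumes as normalized mixed volumes. By polarizing Steiner's formula \eqref{eqn:steiner} (or equivalently via Kubota's formula \eqref{eqn:kubota}), one obtains
\[
V_j(K) \;=\; \binom{n}{j}\frac{1}{|D_{n-j}|}\,V(K[j],D_n[n-j]),
\]
where $V(\cdot,\ldots,\cdot)$ denotes the mixed volume and $K[j]$ means $K$ repeated $j$ times. With this in hand, both (i) and (ii) reduce to manipulations of the mixed-volume sequence $M_j(K) := V(K[j],D_n[n-j])$.

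For part (ii), the Alexandrov--Fenchel inequality applied to $M_j(K)$ directly yields $M_j(K)^2 \geq M_{j-1}(K)\,M_{j+1}(K)$ for $1 \leq j \leq n-1$. The same inequality is an equality when $K = D_n$, since $M_j(D_n) = |D_n|$ for every $j$. Dividing the inequality for $K$ by the corresponding identity for $D_n$, the binomial and ball-volume factors cancel and one is left with precisely the claimed log-concavity of the normalized sequence $r_j := V_j(K)/V_j(D_n)$.

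For part (i), I would set $a_j := \log r_j$. Part (ii) says that $a_j$ is concave in $j$, and the normalization $V_0(K) = V_0(D_n) = 1$ gives $a_0 = 0$. A standard lemma about concave functions vanishing at the origin then implies that $j \mapsto a_j/j$ is nonincreasing on $\{1,\ldots,n\}$: for $1 \leq i \leq j$ the concavity bound $a_i \geq (i/j)\,a_j + (1-i/j)\,a_0$ rearranges to $a_i/i \geq a_j/j$. Exponentiating gives the decreasing chain claimed in (i). For the equality characterization, monotonicity of the chain forces equality everywhere as soon as it holds anywhere; in particular the extreme equality $V_1(K)/V_1(D_n) = (|K|/|D_n|)^{1/n}$ must hold, which is the equality case of the classical Urysohn inequality and is known to characterize Euclidean balls.

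The one delicate point is the equality analysis, since the full equality case of Alexandrov--Fenchel remains open in general. I would sidestep this issue by observing that only the endpoint (Urysohn) equality case is actually required for the statement, and appeal to its classical resolution (e.g.\ via the equality case of Brunn--Minkowski, as in Schneider's monograph). Modulo that invocation, the entire argument is a single pass of mixed-volume bookkeeping followed by an elementary concave-function lemma.
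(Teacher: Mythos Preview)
The paper does not actually prove this theorem; it is stated as a well-known consequence of the Alexandrov--Fenchel inequalities with a reference to \cite[Ch.~7]{Schneider:2014}. Your derivation of the inequalities themselves is exactly the standard one the paper alludes to: write $V_j(K)=\binom{n}{j}|D_{n-j}|^{-1}V(K[j],D_n[n-j])$, apply Alexandrov--Fenchel to get log-concavity of $M_j=V(K[j],D_n[n-j])$, normalize by $M_j(D_n)=|D_n|$ to obtain (ii), and then use concavity of $a_j=\log r_j$ together with $a_0=0$ to get that $a_j/j$ is nonincreasing, which is (i). That part is fine.

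There is, however, a genuine gap in your equality argument. You claim that ``monotonicity of the chain forces equality everywhere as soon as it holds anywhere,'' and then reduce to Urysohn. This is not correct: if $a_i/i=a_{i+1}/(i+1)$ for some fixed $i$, the concavity argument only yields that the increments $s_k=a_k-a_{k-1}$ satisfy $s_1=\cdots=s_{i+1}$; it says nothing about $s_k$ for $k>i+1$, so you cannot conclude $a_1=a_n/n$ and invoke Urysohn. The fix is easy but different from what you wrote: from $s_1=s_2$ you get equality in $M_1^2\ge M_0M_2$, i.e.\ in Minkowski's quadratic inequality $V(K,D_n[n-1])^2\ge |D_n|\,V(K,K,D_n[n-2])$, whose equality case (balls) \emph{is} classically known since one of the bodies is $D_n$. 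Alternatively, one may appeal directly to the known equality characterization of each individual inequality between consecutive quermassintegrals (see \cite[Sec.~7.4]{Schneider:2014}), which avoids the detour through concavity altogether.
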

Equality holds in \eqref{eqn:Alexandrov-Fenchel} if $K=r D_n+x$ for some $r\geq 0$ and $x\in \R^n$, but there is no complete characterization of the equality cases of \eqref{eqn:Alexandrov-Fenchel}.
For more background on the Alexandrov--Fenchel inequalities and their numerous consequences, see, e.g., \cite[Ch.\ 7]{Schneider:2014}.

In \eqref{eqn:extended_isoperimetric} and \eqref{eqn:Alexandrov-Fenchel} one may replace the intrinsic volume $V_j$ with some other renormalization. In particular, the classical isoperimetric and Urysohn inequalities  are special cases of \eqref{eqn:extended_isoperimetric}: 
\begin{equation*}
	\frac{|\partial K|}{|\partial D_n|} = \frac{V_{n-1}(K)}{V_{n-1}(D_n)} \geq \left(\frac{|K|}{|D_n|}\right)^{\frac{n-1}{n}}, \quad 
	\frac{V_1(K)}{V_1(D_n)} \geq \left(\frac{|K|}{|D_n|}\right)^{\frac{1}{n}}.
\end{equation*}


\subsection{Best approximation of convex bodies by polytopes}\label{relatedresults1}

First, let us briefly remark on the well-posedness of our best-approximation problems. 
By the definition of $\Delta_j$, we have $\Delta_j(K,\cC_N) \geq \Delta_j(K,\cC_{N+1})$, where $\cC_N$ denotes a fixed class of polytopes from $\{\cP_N, \cP_N^i(K), \cP_{(N)}, \cP_{(N)}^o(K)\}$. Since polytopes are dense in the space of all convex bodies with respect to the Hausdorff metric and since $\Delta_j$ is positive definite, we conclude that $\Delta_j(K,\cC_N)$ monotonically decreases to $0$ as $N\to \infty$. In particular, this implies that there are sequences $(P_N)_{N\in\mathbb{N}}$ such that $P_N\in\cC_N$, $\Delta_j(K,\cC_N)\leq \Delta_j(K,P_N)$ and $\Delta_j(K,P_N)\to 0$ as $N\to \infty$. 

To prove the existence of minimizers in $\cC_N$, one may  argue as follows.
Let $N\geq n+1$ and let $x_1,\dotsc,x_N\in \R^n$ be arbitrary. The support function of a convex polytope $P_N:=\conv\{x_1,\dotsc,x_N\}\in \cP_N$ is
\begin{equation*}
 	h_{P_N}(u) = \max \{ x_i\cdot u : 1\leq i\leq N\},\quad \forall u\in \S^{n-1}.
\end{equation*}
The mapping $(x_1,\dotsc,x_N)\mapsto h_{P_N}$ is continuous with respect to the $L^\infty$-norm on continuous functions on $\S^{n-1}$, and therefore the mapping $(x_1,\dotsc,x_N)\mapsto P_N$ is continuous with respect to the Hausdorff metric.
Hence, if we draw points $x_i$ from a  convex compact subset $K\subset \R^n$, then the continuity of the functional $(x_1,\dotsc,x_N) \mapsto \Delta_j(K,P_N)$ yields the existence of a best-approximating polytope $P^b_N\in \cP_N^i(K)$ such that $\Delta_j(K,P^b_N) = \Delta_j(K,\cP_N^i(K))$. More generally, one can show that if $K$ contains a closed ball of radius $r$ in its interior and is contained in an open ball of radius $R$, i.e., $rD_n+x \subset \mathrm{int}\,K$ and $K\subset \mathrm{int}\, RD_n+y$ for some $x,y\in \R^n$, and if $\Delta_j(K,P)$ is  small, then necessarily $rD_n+x\subset P \subset RD_n+y$. Hence, again by compactness and continuity, if $N$ is large enough  there exists a best-approximating polytope $P^b_N\in \cC_N$ such that $\Delta_j(K,P^b_N) = \Delta_j(K,\cC_N)$ for $\cC_N\in\{\cP_N, \cP_N^i(K), \cP_{(N)}, \cP_{(N)}^o(K)\}$.

\smallskip
In the proof of Theorem \ref{thm:1}, we apply a result of Gruber \cite{Gruber:1993b} for the Euclidean unit ball. Fejes T\'oth \cite{FejesToth:1953} stated a version of this theorem in the plane, which was later proven by McClure and Vitale \cite{McClureVitale:1975}.
\begin{theorem}[{\cite[Thm.\ 1]{Gruber:1993b}}]\label{grubervol} 
	Fix $n\geq 2$ and let $K\subset \R^n$ be a convex body of class $C^2$.
	Then
	\begin{align*}
		\lim_{N\to\infty} N^{\frac{2}{n-1}}\, \Delta_n(K,\cP^i_N) = \frac{1}{2} \del_{n-1} \Omega_n(K)^{\frac{n+1}{n-1}},\\
		\lim_{N\to\infty} N^{\frac{2}{n-1}}\, \Delta_n(K,\cP^o_{(N)}) = \frac{1}{2} \div_{n-1} \Omega_n(K)^{\frac{n+1}{n-1}},
	\end{align*}
	where $\Omega_n(K)$ is the affine surface area of $K$ defined in \eqref{eqn:dual_curvature_measure}.
\end{theorem}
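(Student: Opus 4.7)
The plan is to follow Gruber's original strategy, reducing a global approximation problem to a local one by exploiting the $C^2$ regularity. First I would reduce, by a standard density and truncation argument, to bodies of class $C^2_+$ (strictly positive Gauss--Kronecker curvature): the contribution to $\Omega_n(K)$ of any neighborhood where $H_{n-1}(K,\cdot)$ is small is controlled by the continuity of the affine surface area integrand, and capping off flat regions of $K$ by slight inflations of an osculating ball only perturbs both sides of the claimed limit by $o(1)$ as $N\to\infty$. Thereafter the task splits into proving matching upper and lower asymptotic bounds for $N^{2/(n-1)}\Delta_n(K,\cP_N^i)$, and similarly for $\cP_{(N)}^o$.

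For the local analysis, fix a point $x_0\in\partial K$, choose Euclidean coordinates so that $x_0=0$ and the inward unit normal is $e_n$, and write $\partial K$ locally as the graph $x_n=\tfrac12\, q(x')+o(|x'|^2)$, where $q$ is a positive definite quadratic form with $\det q = H_{n-1}(K,x_0)$. A direct computation shows that any cap $C(x_0,h):=K\cap\{x\cdot e_n\le h\}$ satisfies
\begin{equation*}
    |C(x_0,h)| = c_n\, h^{(n+1)/2}\, H_{n-1}(K,x_0)^{-1/2}\,(1+o(1))\quad\text{as } h\downarrow 0,
\end{equation*}
where $c_n$ is the volume of a standard paraboloidal cap of unit height. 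A best-inscribed polytope $P_N\subset K$ is then characterized asymptotically by the property that its ``missing caps,'' indexed by the facets of $P_N$, partition $\partial K$ into nearly congruent pieces of the osculating tangent plane; the optimal local shape of each such piece is the one minimizing a moment-of-inertia functional over $(n-1)$-dimensional simplicial tiles, which is precisely the quantity whose reciprocal defines $\del_{n-1}$.

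The passage from local to global is then the standard Fejes T\'oth-type optimization. If $\rho:\partial K\to(0,\infty)$ denotes the asymptotic density of vertices of $P_N$ on $\partial K$, the missing volume asymptotically equals $\tfrac12 \del_{n-1}N^{-2/(n-1)}\int_{\partial K}\rho(x)^{-2/(n-1)}H_{n-1}(K,x)^{1/2}\,d\mu_{\partial K}(x)$ after one integrates the cap volume formula against the optimal local tessellation. Minimizing over densities $\rho$ with $\int\rho\,d\mu_{\partial K}=1$ by H\"older's (or Jensen's) inequality gives the unique optimal density $\rho^*(x)\propto H_{n-1}(K,x)^{1/(n+1)}$ and produces the value $\tfrac12\del_{n-1}\Omega_n(K)^{(n+1)/(n-1)}$ for the upper bound. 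The matching lower bound follows from the economic cap covering lemma: any polytope $P_N\in\cP_N^i$ forces at least $N$ disjoint caps near $\partial K$ whose volumes satisfy a reverse H\"older inequality dictated by the same local osculating analysis.

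The main obstacle is the identification of $\del_{n-1}$ as the precise tessellation constant; unlike in the plane, where the optimal tile is the regular hexagon by Fejes T\'oth's inequality, in higher dimensions one does not have an explicit optimal polytope but only the definition of $\del_{n-1}$ as the infimum of a functional over admissible tilings, so careful bookkeeping with a partition of unity on $\partial K$ is needed to glue the local tilings into a globally admissible polytope without losing the constant. For the circumscribed case $\cP_{(N)}^o$, I would run the same argument via polar duality: facets tangent to $\partial K$ correspond to supporting hyperplanes, the ``extra volume'' $|P_N\setminus K|$ decomposes into dual caps with a volume formula of the same order $h^{(n+1)/2}H_{n-1}^{-1/2}$, and the analogous optimal tiling constant is $\div_{n-1}$, yielding the second asserted limit with the same affine invariant $\Omega_n(K)^{(n+1)/(n-1)}$ on the right-hand side.
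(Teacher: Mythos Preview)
The paper does not prove this theorem at all: it is quoted from Gruber \cite{Gruber:1993b} (with the relaxation from $C^2_+$ to $C^2$ attributed to B\"or\"oczky \cite{Boroczky:2000a}) and used as a black box in Section~\ref{proofmainthm}. So there is no ``paper's own proof'' to compare against; your sketch is an attempt to reproduce Gruber's original argument rather than anything in this article.

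As a sketch of Gruber's method, the broad strokes are right---localize via the osculating paraboloid, reduce to an extremal tiling problem in $\R^{n-1}$ that defines $\del_{n-1}$ (respectively $\div_{n-1}$), then optimize the vertex density by H\"older---but two points deserve care. First, the reduction from $C^2$ to $C^2_+$ is \emph{not} a routine density/truncation step: Gruber's original proof genuinely needs positive curvature, and B\"or\"oczky's extension \cite{Boroczky:2000a} required a separate nontrivial argument to handle flat boundary pieces, so ``capping off flat regions by slight inflations of an osculating ball'' undersells the difficulty. Second, your local integrand $H_{n-1}(K,x)^{1/2}$ is off; the correct weight is $H_{n-1}(K,x)^{1/(n-1)}$ (compare \eqref{eqn:random_weighted_approx} with $\psi\equiv 1$), which after the H\"older optimization produces $\big(\int_{\partial K} H_{n-1}^{1/(n+1)}\,d\mu_{\partial K}\big)^{(n+1)/(n-1)}=\Omega_n(K)^{(n+1)/(n-1)}$. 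The circumscribed case in Gruber's work is not literally obtained by polar duality but by a parallel argument using Dirichlet--Voronoi tilings of the tangent space in place of Delone triangulations; polarity is a useful heuristic but does not directly transport the proof.
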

Gruber proved this theorem for convex bodies of class $C^2_+$ (i.e., $C^2$ convex bodies with everywhere positive Gauss--Kronecker curvature), which was subsequently weakened to $C^2$ by Böröczky \cite{Boroczky:2000a}.
Observe that asymptotically, the best approximation is determined by the affine surface area of $K$ and the constants $\del_{n-1}$ and $\div_{n-1}$ which depend only on the dimension $n$. We briefly recall estimates for these constants in the next section.
For the ball, we have $\Omega_n(D_n) = |\partial D_n|= n|D_n|$ and therefore
\begin{align}
	\lim_{N\to\infty} N^{\frac{2}{n-1}}\, \Delta_n(D_n,\cP^i_N) 
		&= \frac{n|D_n|}{2}\del_{n-1}|\partial D_n|^{\frac{2}{n-1}}, \label{eqn:gruberinvert}\\
	\lim_{N\to\infty} N^{\frac{2}{n-1}}\, \Delta_n(D_n,\cP^o_{(N)})
		&= \frac{n|D_n|}{2}\div_{n-1}|\partial D_n|^{\frac{2}{n-1}}.\label{eqn:gruberoutfacets}
\end{align}

Glasauer and Gruber \cite{GlasauerGruber:1997} obtained similar limit theorems for convex bodies of class $C^2$ and the metric $\Delta_1$. We only state their results for the unit ball.
\begin{theorem}[{Corollary to \cite[Thm.\ 1]{GlasauerGruber:1997}}]\label{thm:glasgrubmw}
The following asymptotic formulas hold true:
	\begin{align}
		\lim_{N\to\infty} N^{\frac{2}{n-1}}\, \Delta_1(D_n,\cP_N^i)     &= \frac{V_1(D_n)}{2}\div_{n-1}|\partial D_n|^{\frac{2}{n-1}},\\
		\lim_{N\to\infty} N^{\frac{2}{n-1}}\, \Delta_1(D_n,\cP_{(N)}^o) &= \frac{V_1(D_n)}{2}\del_{n-1}|\partial D_n|^{\frac{2}{n-1}}.
	\end{align}
\end{theorem}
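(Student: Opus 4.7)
The plan is to deduce both asymptotic identities directly from Glasauer and Gruber's main theorem \cite{GlasauerGruber:1997} for the $L^1$ support-function metric
\[
	\delta_1(K,L) = \int_{\Sp} |h_K(u) - h_L(u)|\,d\sigma(u),
\]
by exploiting the elementary comparison $\Delta_1(K,L) \geq V_1(D_n)\, \delta_1(K,L)$ (Theorem~\ref{thm:delta_1} of the appendix), together with the equality case of this inequality, which occurs exactly when $K\cup L$ is convex.

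The first step is to observe that for every admissible approximant in the classes under consideration, one has $K\cup L$ convex. Indeed, if $P\in \cP_N^i$ then $P\subset D_n$, so $D_n\cup P = D_n$ is convex; if $P\in \cP_{(N)}^o$ then $P\supset D_n$, so $D_n\cup P = P$ is convex. In both situations Theorem~\ref{thm:delta_1} gives $\Delta_1(D_n,P) = V_1(D_n)\,\delta_1(D_n,P)$. Since this equality is polytope-wise, passing to the infimum over the respective class yields
\[
	\Delta_1(D_n,\cP_N^i) = V_1(D_n)\,\delta_1(D_n,\cP_N^i), \qquad \Delta_1(D_n,\cP_{(N)}^o) = V_1(D_n)\,\delta_1(D_n,\cP_{(N)}^o).
\]

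The second step is to specialize the Glasauer--Gruber limit theorem to $K=D_n$. For a $C^2$ body $K$, that theorem expresses $\lim_{N\to\infty} N^{2/(n-1)}\,\delta_1(K,\cC_N)$ as a dimensional constant times an affine-type curvature integral over $\partial K$, with the constant being $\tfrac{1}{2}\div_{n-1}$ for the inscribed class $\cP_N^i$ and $\tfrac{1}{2}\del_{n-1}$ for the circumscribed class $\cP_{(N)}^o$. (The roles of $\div_{n-1}$ and $\del_{n-1}$ are swapped compared with the volume case of Theorem~\ref{grubervol} because $\delta_1$ measures deviation of support functions, a quantity which is polar dual to volume deviation, and the duality exchanges inscribed with circumscribed best-approximation constants.) For the unit ball, all generalized principal curvatures equal $1$, so the curvature integrand collapses to a constant and the integral reduces to a pure power of the surface area $|\partial D_n|$, producing the factor $|\partial D_n|^{2/(n-1)}$.

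Finally, combining the two steps yields
\[
	\lim_{N\to\infty} N^{\frac{2}{n-1}}\,\Delta_1(D_n,\cP_N^i) = V_1(D_n)\cdot \tfrac{1}{2}\div_{n-1}|\partial D_n|^{\frac{2}{n-1}},
\]
and analogously with $\del_{n-1}$ in the circumscribed case, which is exactly the claim. The only nontrivial point is the bookkeeping in step two: one must verify that the constant appearing in Glasauer--Gruber for the ball is indeed $\div_{n-1}$ (resp.\ $\del_{n-1}$), not $\del_{n-1}$ (resp.\ $\div_{n-1}$). This is the main obstacle, but it is purely a matter of tracking through their computation for the spherical case and invoking the duality between inscribed-volume and circumscribed-width best approximation of the ball; no new argument is needed, so the theorem is essentially a corollary, as already advertised in the statement.
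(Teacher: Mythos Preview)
Your proposal is correct and follows exactly the approach indicated in the paper: the paper does not give a formal proof of this statement but instead remarks immediately afterward that Glasauer and Gruber state their result for the $L^1$ metric $\delta_1$, and that $\Delta_1 = V_1(D_n)\,\delta_1$ whenever $K\cup L$ is convex (in particular for inscribed or circumscribed polytopes), referring to Appendix~\ref{sec:width_rel} for the comparison. Your write-up is simply a more explicit unpacking of that remark, including the observation that the constants $\del_{n-1}$ and $\div_{n-1}$ appear with their roles swapped relative to the volume case.
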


\begin{remark}
	Please note that Glasauer and Gruber actually state Theorem \ref{thm:glasgrubmw} in terms of the $L^1$-metric $\delta_1(K,L)=\int_{\Sp} |h_K(u)-h_L(u)|\, d\sigma(u)$. In the case of inscribed and circumscribed polytopes, $\delta_1$ and $\Delta_1$ agree, up to a dimensional constant. In general, $\Delta_1(K,L)\geq V_1(D_n) \delta_1(K,L)$ with equality if and only if $K\cup L$ is convex. For more information, see Appendix \ref{sec:width_rel}. 
\end{remark}

\subsection{Delone triangulation and Dirichlet--Voronoi tiling numbers}\label{sec:Del_Div}

The constants $\del_{n-1}$ and $\div_{n-1}$ are called the Delone triangulation and Dirichlet--Voronoi tiling numbers in $\R^{n-1}$, respectively. They were introduced by Gruber in \cite{Gruber:1993b} and are named after Delone triangulations and Dirichlet--Voronoi tilings in $\R^{n-1}$, which are dual tessellations of $\R^{n-1}$ that arise in the proofs of the asymptotic formulas \eqref{eqn:gruberinvert} and \eqref{eqn:gruberoutfacets} in \cite{Gruber:1993b}. The exact values of these constants are known explicitly only for $n=2$ and $n=3$. 
Fejes T\'oth \cite{FejesToth:1953}  derived the values $\del_1=1/6$ and $\div_1=1/12$. The values $\del_2=\frac{1}{2\sqrt{3}}$ and $\div_2=\frac{5}{18\sqrt{3}}$ were later determined by Gruber in \cite{Gruber:1988} and \cite{Gruber:1991}, respectively.
For $n\geq 4$,  the exact values of $\div_{n-1}$ and $\del_{n-1}$ are unknown, but their asymptotic behavior has been estimated quite precisely. The best-known asymptotic estimates for $\del_{n-1}$ and $\div_{n-1}$ are:
\begin{equation}\label{eqn:del_div}
	\del_{n-1} = \frac{n}{2\pi e} \left(1 + O\left(\frac{\ln n}{n}\right)\right), \quad
	\div_{n-1} = \frac{n}{2\pi e} \left(1+\frac{\ln n}{n}+O\left(\frac{1}{n}\right)\right). 
\end{equation}
The estimate for $\del_{n-1}$ is due to Mankiewicz and Sch\"utt \cite{MankiewiczSchutt:2000, MankiewiczSchutt:2001}, and the estimate for $\div_{n-1}$ is due to Hoehner and Kur \cite{HoehnerKur:2018}. 

\paragraph{Laguerre--Delone triangulation and Laguerre--Dirichlet--Voronoi tiling numbers.}
The dimensional constants $\ldel_{n-1}$ and $\ldiv_{n-1}$ that appear in Theorem \ref{dualthm} are called the Laguerre--Delone and Laguerre--Dirichlet--Voronoi tiling numbers in $\R^{n-1}$, respectively. They were introduced by Ludwig \cite{Ludwig:1999} and are connected with Laguerre--Delone and Laguerre--Dirichlet--Voronoi tilings in $\R^{n-1}$, respectively. These tilings arise in the proofs of the asymptotic formulas \eqref{eqn:weighted_approx} for arbitrarily positioned polytopes in \cite{Ludwig:1999}. For $n=2$, it is known that $\ldel_1=\ldiv_1=1/16$ (see, e.g., \cite{Ludwig:1999}). For $n=3$, Gruber \cite{Gruber:1988} conjectured that $\ldel_2=\frac{1}{6\sqrt{3}} -\frac{1}{8\pi}$. B\"or\"oczky and Ludwig \cite{BoroczkyLudwig:1999} later proved that this is the correct value, and they also established that $\ldiv_2=\frac{5}{18\sqrt{3}} - \frac{1}{4\pi}$. For $n\geq 4$, the exact values of $\ldel_{n-1}$ and $\ldiv_{n-1}$ are again unknown. It has been determined that there exist positive absolute constants $C_1,C_2,C_3,C_4$ such that
\begin{equation}\label{eqn:ldel_ldiv}
	\frac{C_1}{n} \leq \ldel_{n-1} \leq C_2 \quad \text{ and } \quad
	C_3 \leq \ldiv_{n-1} \leq C_4.
\end{equation}
The lower and upper estimates for $\ldel_{n-1}$ are due to B\"or\"oczky \cite{Boroczky:2000b} and Ludwig, Sch\"utt and Werner \cite{LSW:2006}, respectively, and the lower and upper estimates for $\ldiv_{n-1}$ are due to Ludwig, Sch\"utt and Werner \cite{LSW:2006} and Kur \cite{Kur:2017}, respectively. In fact, Kur \cite{Kur:2017} gave the estimates
\begin{equation}
	\frac{0.25}{\pi e}+o(1) \leq \ldiv_{n-1} \leq \frac{0.97}{\pi e}+o(1).
\end{equation}
Closing the dimensional gap between the upper and lower estimates for $\ldel_{n-1}$ appears to be a difficult open problem.


\subsection{Intrinsic volume approximation of convex bodies by random polytopes}\label{sec:AffentrangerRandom}

Random constructions have frequently been used to generate well-approximating polytopes.  Remarkably, it turns out that in many cases, as the dimension tends to infinity, random approximation of smooth convex bodies is asymptotically as good as best approximation. This phenomenon has been observed in, e.g., the  volume approximation by inscribed polytopes \cite{Gruber:1993a, SchuttWerner:2003};  volume, surface area, and mean width approximation by circumscribed polytopes \cite{Boroczky:2004, BoroczkyCsikos:2009, GlasauerGruber:1997, Gruber:1993a}; and volume \cite{HoehnerKur:2018, Kur:2017, LSW:2006} and surface area \cite{HSW:2018, Kur:2017} approximation by arbitrarily positioned polytopes. 

\smallskip
Affentranger \cite{Affentranger:1991} proved the following asymptotic formula for the approximation of the ball by inscribed random polytopes under the intrinsic volume difference.

\begin{theorem}[{\cite[Thm.\ 5]{Affentranger:1991}}]\label{affentranger:thm}
 	Choose $N$ points $X_1,\ldots,X_N$ independently with respect to the uniform probability measure $\sigma$ on the unit sphere $\Sp$, and set $P_{N}:=\conv\{X_1,\ldots,X_N\}$. Then
	\begin{equation}\label{eqn:affentrangerball1}
		\lim_{N\to\infty} N^{\frac{2}{n-1}}\,\E \Delta_j(D_n, P_{N})
		= \frac{jV_j(D_n)}{2} \alpha(n,j),
	\end{equation}
	where
	\begin{equation}\label{eqn:random_constants}
		\alpha(n,j) 
		:=  \left(1-\frac{2}{n+1}\right) \, \left(\frac{n|D_n|}{|D_{n-1}|}\right)^{\frac{2}{n-1}}\,
			\frac{\Gamma\left(j+1+\frac{2}{n-1}\right)}{\Gamma(j+1)}.
	\end{equation}
\end{theorem}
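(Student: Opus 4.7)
Since $P_N\subset D_n$ almost surely, $\Delta_j(D_n,P_N)=V_j(D_n)-V_j(P_N)$, and it suffices to compute $\E V_j(P_N)$ asymptotically. The strategy is to reduce the $j$th intrinsic-volume problem to a $j$-dimensional volume problem via Kubota's formula and then extract the $N^{-2/(n-1)}$ rate from spherical cap probabilities on $S^{n-1}$.

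First, apply Kubota's formula \eqref{eqn:kubota} to $V_j(P_N)$, take expectations, and use Fubini's theorem to interchange with the Grassmannian integral. The rotational invariance of the uniform distribution on $S^{n-1}$ makes $\E|\pi_E(P_N)|$ independent of $E\in\Gr_j(\R^n)$, so for any fixed subspace $E_0\in\Gr_j(\R^n)$ with $D_j^{E_0}:=D_n\cap E_0$,
\[
	\E\Delta_j(D_n,P_N) = \binom{n}{j}\frac{|D_n|}{|D_j||D_{n-j}|}\,\E\bigl|D_j^{E_0}\setminus\pi_{E_0}(P_N)\bigr|.
\]
A second application of Fubini together with polar coordinates on $D_j^{E_0}$ gives
\[
	\E\bigl|D_j^{E_0}\setminus\pi_{E_0}(P_N)\bigr| = j|D_j|\int_0^1(1-s)^{j-1}\,\Pro\bigl((1-s)u\notin\pi_{E_0}(P_N)\bigr)\,ds
\]
for any fixed $u\in S^{j-1}\subset E_0$. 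Since $\pi_{E_0}(X_i)\cdot v = X_i\cdot v$ for all $v\in E_0$, the event $\{(1-s)u\notin\pi_{E_0}(P_N)\}$ is equivalent to the existence of a direction $v\in S^{j-1}$ such that the spherical cap $C_v=\{x\in S^{n-1}:x\cdot v>(1-s)u\cdot v\}$ contains no $X_i$, reducing the problem to a coverage statement on $S^{n-1}$.

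To extract the leading-order asymptotic, observe that the relevant regime is the boundary layer $s=O(N^{-2/(n-1)})$, because spherical caps of height $s$ in $S^{n-1}$ have normalized measure $\sim c_n s^{(n-1)/2}$ with a constant $c_n$ computable from $|D_{n-1}|$ and $|\partial D_n|$. A Wendel-type facet argument identifies the extremal event: the separating hyperplane in $E_0$ is supported by a $(j-1)$-dimensional face of $\pi_{E_0}(P_N)$ whose $j$ spanning vertices are the projections of $j$ of the $X_i$'s clustered around direction $u$ on $S^{n-1}$. The $(j-1)$-dimensional volume of admissible clusters contributes a combinatorial weight of order $s^{j-1}$ beyond the naive single-cap probability, so that after the substitution $s=tN^{-2/(n-1)}$ and dominated convergence, the rescaled integrand factors as a classical Gamma integrand. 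Its evaluation, combined with a beta-function identity for integrating the joint distribution of cluster vertices, produces the Gamma ratio $\Gamma(j+1+2/(n-1))/\Gamma(j+1)$ together with the explicit scalar $(1-2/(n+1))$ appearing in $\alpha(n,j)$.

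Combining the resulting asymptotic with the Kubota prefactor, and simplifying via $|S^{j-1}|=j|D_j|$ and the explicit expression for $c_n$ in terms of $|D_n|/|D_{n-1}|$, yields $\alpha(n,j)$ as stated. The main obstacle lies in the cluster analysis described above: for $j=1$ the event collapses to a single-point extreme-value problem for $\max_i X_i\cdot u$, yielding only $\Gamma(1+2/(n-1))$, while for $j>1$ the extra combinatorial $s^{j-1}$ weighting (arising from the $j$ near-$u$ points spanning the extremal facet) must be tracked precisely. Uniform control over the cap-probability expansions for $v$ varying in $S^{j-1}$ and justification of the dominated-convergence step form the technical heart of the argument.
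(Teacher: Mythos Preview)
The paper does not prove this statement; Theorem~\ref{affentranger:thm} is quoted from \cite{Affentranger:1991} and used as an input to later arguments, so there is no proof in the paper against which your proposal can be compared.

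On the merits of your plan: the Kubota reduction is sound and gets you to the right object, namely $\E|D_j^{E_0}\setminus \pi_{E_0}(P_N)|$, and your polar-coordinate identity for that quantity is correct. Where the proposal becomes unreliable is the analysis of $\Pro((1-s)u\notin \pi_{E_0}(P_N))$ for $j\geq 2$. The ``Wendel-type facet argument'' and the claimed extra $s^{j-1}$ weight from ``clusters of $j$ points near $u$'' are not a correct description of the mechanism: the factor $(1-s)^{j-1}$ in your integral is $\approx 1$ on the relevant boundary layer, and the separating direction $v$ ranges over a $(j-1)$-sphere, so the probability you need is a coverage probability for a continuum of caps, not a single-cap extreme-value problem with a combinatorial correction. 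As stated, the sketch does not produce the $\Gamma(j+1+\tfrac{2}{n-1})/\Gamma(j+1)$ ratio in a controlled way (indeed, your own $j=1$ check yields $\Gamma(1+\tfrac{2}{n-1})$, which already misses the extra factor $1+\tfrac{2}{n-1}$ present in $\alpha(n,1)$, so something in the constant bookkeeping is off).

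A cleaner way to finish your Kubota approach---and essentially what Affentranger's framework provides---is to observe that the projected points $\pi_{E_0}(X_i)$ are i.i.d.\ on $D_j$ with the beta-type density proportional to $(1-\|y\|^2)^{(n-j-2)/2}$ (the pushforward of $\sigma$ under orthogonal projection). Then $\E|D_j\setminus\pi_{E_0}(P_N)|$ is exactly the expected volume defect of a beta random polytope in $D_j$, for which the asymptotics (including the precise Gamma ratios and the $(1-\tfrac{2}{n+1})$ factor) are computed directly in \cite{Affentranger:1991}. This replaces the heuristic cluster/coverage step with an explicit one-dimensional integral over the radial density and recovers $\alpha(n,j)$ without ambiguity.
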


In  Appendix \ref{sec:asymptotic_estimates} we verify that for all $j\in[n]$,
\begin{equation}\label{eqn:affentrager_const}
	\alpha(n,j) = 1+O\left(\frac{\ln n}{n}\right)\quad \text{as $n\to \infty$}.
\end{equation}
Thus, the constant from Theorem \ref{thm:1} i) for $\cC_N = \cP_N^i$ agrees with the constant in the right-hand side of \eqref{eqn:affentrangerball1}, up to an error term of order $O(\frac{\ln n}{n})$. This is summarized in the following corollary, where our estimates can be found in  Appendix \ref{sec:asymptotic_estimates}.
\begin{corollary}\label{cor:AffentrangerRemark}
	Choose $N$ points $X_1,\ldots,X_N$ independently with respect to the uniform probability measure $\sigma$ on the unit sphere $\Sp$, and let $P_{N}:=\conv\{X_1,\ldots,X_N\}$. Then for every $j\in[n]$,
	\begin{equation*}
		\limsup_{N\to\infty}\frac{\E \Delta_j(D_n,P_{N})}{\Delta_j(D_n,\cP_N^i)} = 1+O\left(\frac{\ln n}{n}\right).
	\end{equation*}
\end{corollary}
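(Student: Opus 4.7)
The plan is to compare numerator and denominator by multiplying both by $N^{2/(n-1)}$, so that Theorem \ref{affentranger:thm} controls the numerator while the lower bound from Remark \ref{constantsthm1} controls the denominator. The key observation is that the random polytope $P_N=\conv\{X_1,\dotsc,X_N\}$ is almost surely inscribed in $D_n$ (its vertices lie on $\Sp \subset D_n$), so $P_N \in \cP_N^i$ almost surely and therefore $\Delta_j(D_n,\cP_N^i)\leq \Delta_j(D_n,P_N)$ pointwise. Taking expectations gives $\Delta_j(D_n,\cP_N^i)\leq \E\Delta_j(D_n,P_N)$, so the ratio under consideration is at least $1$ for every $N$, and hence its limsup as $N\to\infty$ is at least $1$.

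For the matching upper bound, I would apply Theorem \ref{affentranger:thm} to obtain
\begin{equation*}
    \lim_{N\to\infty} N^{\frac{2}{n-1}}\,\E\Delta_j(D_n,P_N) = \frac{j\,V_j(D_n)}{2}\,\alpha(n,j),
\end{equation*}
and use the lower bound from Remark \ref{constantsthm1} (proved together with Theorem \ref{thm:1}~i)),
\begin{equation*}
    \liminf_{N\to\infty} N^{\frac{2}{n-1}}\,\Delta_j(D_n,\cP_N^i) \geq \tfrac{1}{2}\,\div_{n-1}\,|\partial D_n|^{\frac{2}{n-1}}\,j\,V_j(D_n).
\end{equation*}
Dividing, the common factor $\tfrac{1}{2}j\,V_j(D_n)$ cancels and we get
\begin{equation*}
    \limsup_{N\to\infty}\frac{\E\Delta_j(D_n,P_N)}{\Delta_j(D_n,\cP_N^i)} \leq \frac{\alpha(n,j)}{\div_{n-1}\,|\partial D_n|^{\frac{2}{n-1}}}.
\end{equation*}

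Finally, I would invoke the asymptotic estimate $\alpha(n,j)=1+O(\ln n/n)$ uniformly in $j\in[n]$ from \eqref{eqn:affentrager_const} (verified in Appendix \ref{sec:asymptotic_estimates}) together with the known asymptotic $\div_{n-1}\,|\partial D_n|^{\frac{2}{n-1}}=1+\ln n/n+O(1/n)$ from \cite{HoehnerKur:2018} (see Remark \ref{constantsthm1}). Since both numerator and denominator are of the form $1+O(\ln n/n)$, their ratio is also $1+O(\ln n/n)$, and combining with the lower bound $1$ established above yields the claim. The only subtle point, which is the main bookkeeping obstacle, is verifying \eqref{eqn:affentrager_const} uniformly in $j$: this requires handling the ratio $\Gamma(j+1+\frac{2}{n-1})/\Gamma(j+1)\sim j^{2/(n-1)}$ together with the prefactor $(n|D_n|/|D_{n-1}|)^{2/(n-1)}$ via Stirling and the factor $1-\frac{2}{n+1}$, but this is exactly the routine Gamma-function computation deferred to the appendix.
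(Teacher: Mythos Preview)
Your proposal is correct and follows essentially the same route as the paper: the paper's proof (carried out in Appendix \ref{sec:asymptotic_estimates}, see \eqref{eqn:random_approx_estim}) likewise combines the trivial lower bound coming from $P_N\in\cP_N^i$, Affentranger's limit \eqref{eqn:affentrangerball1} for the numerator, the liminf bound \eqref{eqn:asymptotic_bounds} for the denominator, and the estimates \eqref{eqn:alpha_estimate} together with the $\div_{n-1}$ asymptotics to conclude that $\alpha(n,j)/(\div_{n-1}|\partial D_n|^{2/(n-1)})=1+O(\ln n/n)$.
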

More generally, Reitzner \cite{Reitzner:2002} obtained an asymptotic formula for the expected $j$th intrinsic volume difference for $C_2^+$ convex bodies, which was extended in \cite{BFH:2013} to convex bodies that admit a rolling ball from the inside.
\begin{theorem}[{\cite[Thm.\ 1]{Reitzner:2002} and \cite[Thm.\ 1.2]{BFH:2013}}]\label{reitznerthm}
	Let $K$ be a convex body that admits a rolling ball from the inside. Choose $N$ points $X_1,\ldots,X_N$  at random from $\partial K$ independently according to a positive, continuous probability density function $\varphi:\partial K \to (0,\infty)$, and let $P_{N}^\varphi:=\conv\{X_1,\ldots,X_N\}$. Then
	\begin{equation}\label{reitznerrandom1}
		\lim_{N\to\infty} N^{\frac{2}{n-1}}\, \E \Delta_j(K, P_{N}^\varphi) 
		= \beta(n,j) \int_{\partial K} \varphi(x)^{-\frac{2}{n-1}} H_{n-j}(K,x) H_{n-1}(K,x)^{\frac{1}{n-1}}\, d\mu_{\partial K}(x),
	\end{equation}
	where $\beta(n,j)$ is a positive constant that depends only on $n$ and $j$, and for $k\in\{0,\ldots,n-1\}$, $H_k(K,x)$ is the $k$th normalized elementary symmetric function of the (generalized) principal curvatures of $K$ at $x$.
\end{theorem}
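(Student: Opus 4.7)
Since $P_N^\varphi \subset K$ and intrinsic volumes are monotone under inclusion, $\Delta_j(K,P_N^\varphi) = V_j(K) - V_j(P_N^\varphi)$, so it suffices to determine the asymptotics of $N^{2/(n-1)}\, \E[V_j(K) - V_j(P_N^\varphi)]$. The plan is to first establish the volume case $j=n$ (the classical Sch\"utt--Werner--Reitzner formula) by a direct cap--covering argument, and then reduce the general case $j<n$ to the volume case via Kubota's integral formula \eqref{eqn:kubota} combined with an analysis of projected random polytopes.

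\textbf{Volume case.} I would first prove
\begin{equation*}
    \lim_{N \to \infty} N^{\frac{2}{n-1}}\, \E[V_n(K) - V_n(P_N^\varphi)] = \beta(n,n) \int_{\partial K} \varphi(x)^{-\frac{2}{n-1}} H_{n-1}(K,x)^{\frac{1}{n-1}}\, d\mu_{\partial K}(x).
\end{equation*}
The standard approach is: (a) localize at each boundary point $x$ using a rolling-ball chart, valid for $\mu_{\partial K}$-a.e.\ $x$ under the hypothesis; (b) model $K$ locally as a paraboloid whose shape is determined by the generalized principal curvatures at $x$; (c) estimate the probability that a cap above $x$ of height $t$ contains none of the $N$ sample points via a Poisson-type bound with rate proportional to $N\varphi(x) \cdot t^{(n-1)/2} H_{n-1}(K,x)^{-1/2}$; (d) integrate the expected ``wasted'' volume above $x$ in $t$ using the substitution $s = N^{2/(n-1)} t$, and apply dominated convergence. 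This recovers the classical formula and determines $\beta(n,n)$.

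\textbf{Reduction to general $j$.} Apply Kubota's formula \eqref{eqn:kubota} to both $K$ and $P_N^\varphi$, exchange expectation with the Grassmannian integration, and use $\pi_E(P_N^\varphi) = \conv\{\pi_E(X_i) : i \in [N]\} \subset \pi_E(K)$ to obtain
\begin{equation*}
    \E[V_j(K) - V_j(P_N^\varphi)] = \binom{n}{j} \frac{|D_n|}{|D_j|\, |D_{n-j}|} \int_{\Gr_j(\R^n)} \E\bigl[|\pi_E K| - |\pi_E P_N^\varphi|\bigr]\, d\nu_j(E).
\end{equation*}
For each fixed $E$, the projected random points $\pi_E(X_i) \in E$ have a pushforward density on $\pi_E(K)$ whose singular behavior near $\partial \pi_E(K)$ is dictated, via the coarea formula, by the geometry of the silhouette $\{x \in \partial K : u(x) \in E\}$ and by $\varphi$. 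Adapting the volume argument to this projected setting yields an inner integrand whose pullback to $\partial K$ takes the form $\varphi(x)^{-2/(n-1)}$ times a factor depending on the restriction of the second fundamental form of $\partial K$ at $x$ to the subspace $E$.

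\textbf{Main obstacle.} The hardest step is combining the inner projected-polytope asymptotics with the outer Grassmannian integration to produce the normalized elementary symmetric function $H_{n-j}(K,x)$. This relies on a classical Cauchy--Binet-type identity of the schematic form
\begin{equation*}
    \int_{\Gr_j(\R^n)} \Phi(E,x)\, d\nu_j(E) = c_{n,j}\, H_{n-j}(K,x),
\end{equation*}
where $\Phi(E,x)$ encodes the Jacobian of $\pi_E$ and the restricted shape operator at silhouette points of $K$ above $x$. A second subtlety is extending the pointwise paraboloid approximation from $C^2$ bodies (the framework of Reitzner \cite{Reitzner:2002}) to bodies merely admitting a rolling ball (the strengthening in \cite{BFH:2013}): this requires working with generalized principal curvatures defined only $\mu_{\partial K}$-a.e.\ and controlling the error terms uniformly so that dominated convergence applies after the substitution $s = N^{2/(n-1)} t$.
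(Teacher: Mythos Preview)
This theorem is not proved in the present paper; it is quoted from \cite{Reitzner:2002} and \cite{BFH:2013} as background and used as a tool. So there is no proof here to compare against, but your sketch can be compared to the arguments in those references.

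Your Kubota reduction has a genuine gap. When you project $X_i\in\partial K$ to $E\in\Gr_j(\R^n)$, the points $\pi_E(X_i)$ do \emph{not} lie on $\partial(\pi_E K)$; they are spread through the interior of $\pi_E K$ with a pushforward density that typically vanishes or blows up at the boundary (for $K=D_n$ and uniform $\varphi$, the density at $y$ is proportional to $(1-|y|^2)^{(n-j-2)/2}$). Hence the ``volume case'' you established for \emph{boundary} random polytopes does not apply to the inner $j$-dimensional problem; you would need a separate asymptotic for interior random polytopes with boundary-degenerate densities. The rate $N^{-2/(n-1)}$ does survive because the degeneracy exponent exactly compensates the dimension drop, but identifying the limiting constant, producing $H_{n-j}$ from your vaguely stated Cauchy--Binet identity, and justifying the interchange of $\lim_{N\to\infty}$ with $\int_{\Gr_j}$ are all substantial missing steps.

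The actual proofs in \cite{Reitzner:2002, BFH:2013} avoid this detour entirely: they stay on $\partial K$ and analyze $V_j(K)-V_j(P_N^\varphi)$ via a local Steiner-type expansion. The key observation is that the $j$th intrinsic volume of a thin cap of $K$ above $x\in\partial K$ is, to leading order, $H_{n-j}(K,x)$ times the cap volume, since $H_{n-j}$ is the density of the relevant curvature measure. Thus $H_{n-j}$ enters intrinsically from the local second fundamental form rather than from a Grassmannian average, and the remainder of the argument (cap-miss probabilities, the substitution $s=N^{2/(n-1)}t$, dominated convergence) runs parallel to the volume case you correctly outlined.
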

Putting $K=D_n$ and comparing \eqref{reitznerrandom1} with \eqref{eqn:affentrangerball1} yields 
\begin{equation}\label{eqn:beta}
	\beta(n,j) = \alpha(n,j) \frac{j V_j(D_n)}{2n|D_n|} |\partial D_n|^{-\frac{2}{n-1}} = \frac{jV_j(D_n)}{4\pi e |D_n|} \left(1+O\left(\frac{\ln n}{n}\right)\right),
\end{equation}
and in particular,
\begin{equation*}
	\beta(n,n) = \frac{1}{2} \left(1-\frac{2}{n+1}\right) |D_{n-1}|^{-\frac{2}{n-1}} \frac{\Gamma\left(n+1+\frac{2}{n-1}\right)}{\Gamma(n+1)}.
\end{equation*}

Using Hölder's inequality, Reitzner showed that the right-hand side of \eqref{reitznerrandom1} is minimized by the probability density
\begin{equation}\label{eqn:bestdensity}
	\varphi_{j}(x) := \frac{H_{n-j}(K,x)^{\frac{n-1}{n+1}}H_{n-1}(K,x)^{\frac{1}{n+1}}}{\int_{\partial K}H_{n-j}(K,x)^{\frac{n-1}{n+1}} H_{n-1}(K,x)^{\frac{1}{n+1}}\, d\mu_{\partial K}(x)}, \quad\quad \forall x\in\partial K.
\end{equation}
Choosing this density in \eqref{reitznerrandom1} yields
\begin{equation}\label{reitznerbest1}
	\lim_{N\to\infty} N^{\frac{2}{n-1}}\, \E \Delta_j(K,P_{N}^{\varphi_{j}}) 
	=  \beta(n,j) \left(\,\int_{\partial K} H_{n-j}(K,x)^{\frac{n-1}{n+1}} H_{n-1}(K,x)^{\frac{1}{n+1}}\,d\mu_{\partial K}(x)\right)^{\frac{n+1}{n-1}}.
\end{equation}

Let $\psi:\R^n\to(0,\infty)$ be a continuous function. As a generalization of the volume difference, one defines the \emph{$\psi$-weighted volume difference} by
\begin{equation}\label{eqn:weighted_volume_difference}
	\Delta_n^\psi(K,L) := \int_{K\triangle L} \psi(x) \, dx.
\end{equation}
Best approximation with respect to the $\psi$-weighted volume difference was considered in \cite{Ludwig:1999} (see Theorem \ref{thm:weighted_approx} below). We need the following generalization of the random approximation for the volume difference to the weighted volume difference, which was obtained in \cite{Ziebarth:2014}. This result is an extension of \cite[Thm.\ 1.1]{BFH:2013} for the case $j=n$ (see also \cite{SchuttWerner:2003}).

\begin{theorem}[{\cite[Satz 10.1]{Ziebarth:2014}, weighted volume extension of \cite[Thm.\ 1.1]{BFH:2013}}] \label{thm:random_weighted_approx}
	Let $K$ be a convex body that admits a rolling ball from the inside, and let $\psi:K\to (0,\infty)$ be a  weight function on $K$ that is continuous near the boundary of $K$ and such that $\sup_{K}\psi < \infty$. Choose $N$ points $X_1,\dotsc,X_N$  at random from $\partial K$ independently according to a continuous probability density function $\varphi:\partial K \to (0, \infty)$, and let $P_N^{\varphi}:=\conv\{X_1,\dotsc,X_N\}$. Then
	\begin{equation}\label{eqn:random_weighted_approx}
		\lim_{N\to \infty} N^{\frac{2}{n-1}}\, \E \Delta_n^\psi(K,P_N^\varphi) = \beta(n,n) \int_{\partial K} 
			\varphi(x)^{-\frac{2}{n-1}} \psi(x) H_{n-1}(K,x)^{\frac{1}{n-1}} \, d\mu_{\partial K}(x).
	\end{equation}
\end{theorem}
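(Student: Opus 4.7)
The plan is to adapt the unweighted case $\psi \equiv 1$, which is \cite[Thm.~1.1]{BFH:2013}, by carrying the weight $\psi$ through the asymptotic analysis. Since each $X_i \in \partial K$ and $K$ is convex we have $P_N^\varphi \subset K$, and hence
\[
  \Delta_n^\psi(K, P_N^\varphi) \;=\; \int_K \psi(x)\,\mathbf{1}[x \notin P_N^\varphi]\,dx.
\]
Taking expectations and applying Fubini's theorem gives the basic identity
\[
  \E \Delta_n^\psi(K, P_N^\varphi) \;=\; \int_K \psi(x)\,\Pr(x \notin P_N^\varphi)\,dx,
\]
which reduces the problem to the pointwise asymptotics of $\Pr(x \notin P_N^\varphi)$.

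Next, I would split $K$ into a bulk region $K_\delta := \{x \in K : \dist(x,\partial K) > \delta\}$ and a boundary shell $K \setminus K_\delta$, eventually letting $\delta = \delta(N) \to 0$ at a suitably slow rate. Economic cap-covering arguments as in \cite{BFH:2013} yield a uniform bound $\Pr(x \notin P_N^\varphi) \leq C_1 e^{-c_2 N}$ on $K_\delta$, so boundedness of $\psi$ makes the bulk contribution $O(e^{-c_2 N})$, which is negligible after multiplying by $N^{2/(n-1)}$. On the boundary shell, the rolling-ball condition guarantees a unique metric projection $\pi\colon K \setminus K_\delta \to \partial K$, and one parameterizes $x = \pi(x) + t\,n_{\pi(x)}$ where $t = \dist(x,\partial K)$ and the Jacobian of this change of variables tends to $1$ as $t \to 0$.

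The core of the BFH argument is a pointwise asymptotic for $N^{2/(n-1)} \Pr(x \notin P_N^\varphi)$ obtained after rescaling $t = s\,N^{-2/(n-1)}$ near each boundary point $y \in \partial K$, combined with a dominated convergence step that produces the limiting density $\beta(n,n)\,\varphi(y)^{-2/(n-1)} H_{n-1}(K,y)^{1/(n-1)}$ on $\partial K$. Because the effective support of the boundary-shell integrand collapses onto $\partial K$ and $\psi$ is continuous on a neighborhood of $\partial K$, uniform continuity of $\psi$ on a compact collar lets us replace $\psi(\pi(x) + t\,n_{\pi(x)})$ by $\psi(\pi(x))$ in the rescaled integrand, with an error that vanishes as $N \to \infty$. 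Passing to the limit then yields the asserted integral of $\psi(x)\,\varphi(x)^{-2/(n-1)} H_{n-1}(K,x)^{1/(n-1)}$ against $d\mu_{\partial K}(x)$, with prefactor $\beta(n,n)$.

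The main obstacle is the justification of dominated convergence for the weighted integrand: one must exhibit an integrable upper envelope for $\psi(x)\,N^{2/(n-1)} \Pr(x \notin P_N^\varphi)$ uniform in $N$. This is exactly what \cite{BFH:2013} establish via Macbeath-region and floating-body estimates in the unweighted case, and since $\psi$ is bounded by $\sup_K \psi < \infty$, multiplying those envelopes by $\sup_K \psi$ suffices. The extension is therefore conceptually routine; the actual work is a careful transcription to ensure no step of the BFH asymptotic tacitly used $\psi \equiv 1$ in a way that affects the limiting constant.
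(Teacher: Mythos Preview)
The paper does not give its own proof of this statement: it is quoted as a preliminary result from \cite[Satz~10.1]{Ziebarth:2014}, stated in the Preliminaries section, and then simply \emph{applied} (for instance in the proofs of Theorem~\ref{thm:dual_optimal} and Theorem~\ref{dualwillscor}). There is nothing in the paper to compare your argument against.

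That said, your outline is exactly the route one would expect such an extension to take, and it matches the spirit of how the paper describes the result (``weighted volume extension of \cite[Thm.~1.1]{BFH:2013}''). The Fubini representation, the bulk/shell decomposition with exponential decay on the bulk, the normal parametrization of the collar, and the observation that boundedness of $\psi$ lets you recycle the BFH integrable envelope for dominated convergence are all the right ingredients. The only caveat is that what you have written is a proof \emph{plan} rather than a proof: the hard analytic work---the Macbeath-region estimates, the handling of boundary points where the generalized curvature may vanish, and the precise rate at which $\delta(N)\to 0$---is deferred to \cite{BFH:2013}. If you were asked to actually supply a proof, you would need to verify step by step that those estimates are uniform enough to survive multiplication by the (bounded, boundary-continuous) weight, which is routine but not literally trivial.
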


Using Hölder's inequality, we derive that given $\psi$, the minimal value of the right-hand side of \eqref{eqn:random_weighted_approx} is achieved for the probability density
\begin{equation}\label{eqn:optimal_psi}
	\widetilde{\psi}(x)
	:= \frac{\psi(x)^{\frac{n-1}{n+1}}H_{n-1}(K,x)^{\frac{1}{n+1}}}{\int_{\partial K} \psi(x)^{\frac{n-1}{n+1}} H_{n-1}(K,x)^{\frac{1}{n+1}} \, d\mu_{\partial K}(x)}, \quad\quad \forall x\in\partial K.
\end{equation}
Choosing this density in \eqref{eqn:random_weighted_approx} yields
\begin{equation}\label{eqn:random_weighted_approx_best}
		\lim_{N\to \infty} N^{\frac{2}{n-1}}\, \E \Delta_n^\psi(K,P_N^{\widetilde{\psi}}) 
		= \beta(n,n) \left(\,\,\int_{\partial K} \psi(x)^{\frac{n-1}{n+1}} H_{n-1}(K,x)^{\frac{1}{n+1}} \, d\mu_{\partial K}(x)\right)^{\frac{n+1}{n-1}}.
\end{equation}

\medskip
A dual random construction that generates random polytopes which are circumscribed around a convex body was considered in \cite{BoroczkyReitzner:2004}. Choose $N$ points $X_1,\dotsc,X_N$ randomly and independently from the boundary of a convex body $K$ of class $C^2$, and consider the random polyhedral set $P_{(N)}$ that is the intersection of all the closed halfspaces $H^-_i$ of $K$, where $H^-_i$ is the uniquely determined supporting halfspace of $K$ at $X_i\in \partial K$.

\begin{theorem}[{\cite[Thm.\ 1]{BoroczkyReitzner:2004}}]\label{thm:circumscribed}
	Let $K\subset \R^n$ be a convex body of class $C^2$ and let $L$ be an arbitrary convex body that contains $K$ in its interior. Choose $N$ points $X_1,\dotsc,X_N$  at random from $\partial K$ independently according to a  continuous probability density function $\varphi: \partial K \to (0,\infty)$, and set $P_{(N)}^{\phi}:= \bigcap_{i=1}^N H_i^-$. 
		If $\psi:L\to (0,\infty)$ is a continuous and bounded weight function, then 
		\begin{equation}\label{eqn:random_approx_circum_weighted}
			\lim_{N\to\infty} N^{\frac{2}{n-1}}\, \E \Delta_n^\psi(K,P_{(N)}^{\varphi}\cap L)
			= \beta(n,1) |D_{n-1}| \int_{\partial K}
				\varphi(x)^{-\frac{2}{n-1}} \psi(x) H_{n-1}(K,x)^{\frac{1}{n-1}} \, d\mu_{\partial K}(x).
		\end{equation}
		The optimal density is given by $\phi=\widetilde{\psi}$ as defined in \eqref{eqn:optimal_psi}, and in this case
		\begin{equation}
			\lim_{N\to\infty} N^{\frac{2}{n-1}}\, \E \Delta_n^\psi(K,P_{(N)}^{\widetilde{\psi}}\cap L)
				= \beta(n,1) |D_{n-1}| \left(\,\,\int_{\partial K} \psi(x)^{\frac{n-1}{n+1}} H_{n-1}(K,x)^{\frac{1}{n+1}} \, d\mu_{\partial K}(x)\right)^{\frac{n+1}{n-1}}.
		\end{equation}
\end{theorem}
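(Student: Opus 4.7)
The plan is to reduce $\E\Delta_n^\psi(K,P_{(N)}^\varphi\cap L)$ to an integral amenable to a Laplace-type asymptotic. Since $K\subset P_{(N)}^\varphi$ always, we have $K\triangle(P_{(N)}^\varphi\cap L) = (P_{(N)}^\varphi\cap L)\setminus K$, and Fubini's theorem gives
\begin{equation*}
    \E\Delta_n^\psi(K,P_{(N)}^\varphi\cap L)
        = \int_{L\setminus K}\psi(x)\,\Pro(x\in P_{(N)}^\varphi)\,dx
        = \int_{L\setminus K}\psi(x)\bigl(1-q_\varphi(x)\bigr)^N\,dx,
\end{equation*}
where $q_\varphi(x):=\int_{\partial K}\mathbf{1}\{\langle x-y,\nu_y\rangle>0\}\,\varphi(y)\,d\mu_{\partial K}(y)$ is the $\varphi$-probability that a single random supporting halfspace $H_{X_1}^-$ fails to contain $x$. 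The proof thus reduces to a careful analysis of $q_\varphi(x)$ and of the integrand $(1-q_\varphi(x))^N$ for $x$ close to $\partial K$.

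Next I would introduce normal coordinates on a tubular neighbourhood of $\partial K$: since $K$ is of class $C^2$, each $x\in L\setminus K$ near $\partial K$ admits a unique representation $x=x_0+t\nu_{x_0}$ with $x_0\in\partial K$ and $t>0$, and $dx = J(x_0,t)\,dt\,d\mu_{\partial K}(x_0)$ with $J(x_0,t)\to 1$ as $t\downarrow 0$. Placing $x_0$ at the origin with outward normal $e_n$, a quadratic expansion represents $\partial K$ locally as the graph $z=-\tfrac12 u^{\top}\!Au+o(\|u\|^2)$, where $A$ is the positive semidefinite shape operator at $x_0$ with $\det A = H_{n-1}(K,x_0)$. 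Computing $\langle x-y,\nu_y\rangle$ for $y=(u,z(u))$ shows to leading order that $\langle x-y,\nu_y\rangle>0$ iff $u^{\top}\!Au<2t$; this ellipsoid has surface measure asymptotic to $|D_{n-1}|(2t)^{(n-1)/2}H_{n-1}(K,x_0)^{-1/2}$, so by continuity of $\varphi$,
\begin{equation*}
    q_\varphi(x) = \varphi(x_0)\,|D_{n-1}|(2t)^{(n-1)/2}H_{n-1}(K,x_0)^{-1/2}\,(1+o(1))\quad\text{as } t\downarrow 0.
\end{equation*}

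I would then substitute this into the integral and apply a Laplace-type change of variable. Setting $c(x_0):=\varphi(x_0)\,|D_{n-1}|\,2^{(n-1)/2}H_{n-1}(K,x_0)^{-1/2}$ and substituting $s=Nc(x_0)t^{(n-1)/2}$, the inner $t$-integral becomes asymptotically $\Gamma\bigl(\tfrac{n+1}{n-1}\bigr)(Nc(x_0))^{-2/(n-1)}$, and multiplication by $N^{2/(n-1)}$ cancels the $N$-dependence. This produces
\begin{equation*}
    N^{\frac{2}{n-1}}\E\Delta_n^\psi(K,P_{(N)}^\varphi\cap L)\;\longrightarrow\;\frac{\Gamma\!\left(\tfrac{n+1}{n-1}\right)}{2\,|D_{n-1}|^{\frac{2}{n-1}}}\int_{\partial K}\varphi(x)^{-\frac{2}{n-1}}\psi(x)\,H_{n-1}(K,x)^{\frac{1}{n-1}}\,d\mu_{\partial K}(x),
\end{equation*}
which matches \eqref{eqn:random_approx_circum_weighted} once $\beta(n,1)|D_{n-1}|$ is identified with the constant on the right, consistent with \eqref{eqn:beta}. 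The optimality statement for $\widetilde\psi$ then follows from Hölder's inequality with exponents $\tfrac{n+1}{n-1}$ and $\tfrac{n+1}{2}$ applied to $\varphi\mapsto\int\varphi^{-2/(n-1)}\psi H_{n-1}^{1/(n-1)}\,d\mu_{\partial K}$ under the constraint $\int\varphi\,d\mu_{\partial K}=1$; the extremal density is precisely $\widetilde\psi$ as defined in \eqref{eqn:optimal_psi}.

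The main obstacle is upgrading the pointwise asymptotic to a statement amenable to dominated convergence, uniformly in $x_0$. The tail contribution from $x$ far from $\partial K$ must be controlled, and this is where the bounding body $L$ is essential: without it, $(P_{(N)}^\varphi\cap L)\setminus K$ could be unbounded, but with $L$ fixed one has $q_\varphi(x)$ bounded below by a positive constant whenever $t$ exceeds any fixed threshold, so $(1-q_\varphi)^N$ decays geometrically there. For the local analysis, the $C^2$-hypothesis yields uniform quadratic control of $\partial K$ on compact subsets of $\partial K$, and a lower envelope of the form $q_\varphi(x)\gtrsim\min\{t^{(n-1)/2},1\}$ combined with $(1-q)^N\leq e^{-Nq}$ produces an integrable majorant for the rescaled integrand, legitimating dominated convergence. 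These are precisely the uniform estimates used in the inscribed analogue treated in \cite{Reitzner:2002, BFH:2013}, and they transfer essentially verbatim to the present circumscribed setting via the normal-tube parametrization above.
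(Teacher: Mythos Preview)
The paper does not prove this theorem; it is stated in the preliminaries (Section~\ref{sec:preliminaries}) as a quoted result from \cite{BoroczkyReitzner:2004} and used as a black box, so there is no in-paper proof to compare against. Your outline is the standard argument for such results and is essentially how \cite{BoroczkyReitzner:2004} proceeds: rewrite the expectation via Fubini as $\int_{L\setminus K}\psi(x)(1-q_\varphi(x))^N\,dx$, expand $q_\varphi$ in a normal tube using the second fundamental form, and pass to the limit by a Laplace-type argument plus dominated convergence. Your constant computation is also correct: using \eqref{eqn:random_constants} and \eqref{eqn:beta} together with $\Gamma\!\left(2+\tfrac{2}{n-1}\right)=\tfrac{n+1}{n-1}\,\Gamma\!\left(\tfrac{n+1}{n-1}\right)$ one checks that
\[
\beta(n,1)\,|D_{n-1}| \;=\; \frac{\Gamma\!\left(\tfrac{n+1}{n-1}\right)}{2\,|D_{n-1}|^{2/(n-1)}},
\]
matching the coefficient you obtained.

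One point to flag: the theorem is stated for $K$ of class $C^2$, not $C^2_+$, so $H_{n-1}(K,x_0)$ may vanish. At such points your ellipsoid $\{u^\top A u<2t\}$ is unbounded and the asymptotic $q_\varphi(x)\sim \varphi(x_0)\,|D_{n-1}|(2t)^{(n-1)/2}H_{n-1}(K,x_0)^{-1/2}$ is not literally valid; instead one shows that $q_\varphi(x)$ is uniformly bounded below by a positive constant times $t^{(n-1)/2}\det(A+\varepsilon I)^{-1/2}$ for small $\varepsilon$, so the contribution near flat points is negligible and the integrand $H_{n-1}^{1/(n-1)}$ vanishes there anyway. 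This is handled in \cite{BoroczkyReitzner:2004} (and in the inscribed analogue in \cite{BFH:2013}) via uniform cap-containment estimates, but your sketch glosses over it.
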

Note that results for $\Delta_{n-1}$ and $\Delta_1$ were also obtained in \cite{BoroczkyReitzner:2004}.

\begin{remark}
	The curvature conditions on $K$ in Theorem \ref{thm:circumscribed} were recently weakened in \cite[Satz 10.4]{Ziebarth:2014}, where one only requires that $K$ slides freely inside a ball, which is equivalent to the property that $K^\circ$ admits a rolling ball from the inside (where we may assume without loss of generality that $K$ contains the origin in its interior).
	However, since in this case $K$ may have singular points, i.e., there might be more than one support hyperplane  at a fixed boundary point, one has to consider a probability distribution on the set of all hyperplanes that envelop the convex body $K$ instead of a probability distribution on $\partial K$. 
	
	In fact, as was observed in \cite{Ziebarth:2014}, the random polytope $P_{(N)}$ is in distribution equivalent to the polar of a random polytope $P_N = \conv \{Y_1,\dotsc, Y_N\}$ with vertices $Y_1,\dotsc, Y_N$ chosen at random from the boundary of $K^\circ$ with respect to a  distribution determined by the  distribution of the halfspaces that generate $P_{(N)}$.
\end{remark}


\section{Intrinsic volume approximation of the ball}\label{proofmainthm}

In this section we prove Theorem \ref{thm:1}.

\subsection{Proof of Theorem \ref{thm:1} i): Inscribed and circumscribed case}
	Let $P\subset D_n$.
	By Theorem \ref{extended_isoperimetric} i), 
	\begin{equation}\label{eqn:inscribed_ineq}
		1-\left(1-\frac{V_1(D_n)-V_1(P)}{V_1(D_n)}\right)^j \leq \frac{V_j(D_n)-V_j(P)}{V_j(D_n)} \leq 1-\left(1-\frac{V_n(D_n)-V_n(P)}{V_n(D_n)}\right)^{\frac{j}{n}}.
	\end{equation}
	Taking the minimum over all $P$ in $\cP_N^i$ we obtain
	\begin{align*}
		1- \left(1-\frac{\Delta_1(D_n,\cP_N^i)}{V_1(D_n)}\right)^j 
		\leq \frac{\Delta_j(D_n,\cP_N^i)}{V_j(D_n)}
		\leq 1- \left(1-\frac{\Delta_n(D_n,\cP_N^i)}{V_n(D_n)}\right)^{\frac{j}{n}}.
	\end{align*}
	Thus, using Theorem \ref{grubervol} and Theorem \ref{thm:glasgrubmw}, as well as the formulas $\lim_{x\to 0^+} \frac{1-(1-cx)^\alpha}{x} = c\alpha$ for $\alpha>0$ and $V_1(D_n) = \frac{n| D_n|}{|D_{n-1}|}$, we derive that 
	\begin{align}
		\limsup_{N\to\infty} N^{\frac{2}{n-1}}\, \Delta_j(D_n,\cP_N^i) &\leq \frac{1}{2} \del_{n-1}|\partial D_n|^{\frac{2}{n-1}} jV_j(D_n),\label{invertupper}\\
		\liminf_{N\to\infty} N^{\frac{2}{n-1}}\, \Delta_j(D_n,\cP_N^i) &\geq \frac{1}{2} \div_{n-1}|\partial D_n|^{\frac{2}{n-1}}j V_j(D_n)\label{invertlower}.
	\end{align}

	Analogously, if $P\supset D_n$ then
	\begin{equation}\label{eqn:circum_ineq}
		\left(1+\frac{V_n(P)-V_n(D_n)}{V_n(D_n)}\right)^{\frac{j}{n}} - 1 \leq \frac{V_j(P)-V_j(D_n)}{V_j(D_n)} \leq \left(1+\frac{V_1(P)-V_1(D_n)}{V_1(D_n)}\right)^j - 1.
	\end{equation}
	Now taking  the minimum over all $P$ in $\cP_{(N)}^o$ we deduce
	\begin{align*}
		\left(1+\frac{\Delta_n(D_n,\cP_{(N)}^o)}{V_n(D_n)}\right)^{\frac{j}{n}} - 1
		\leq \frac{\Delta_j(D_n,\cP_{(N)}^o)}{V_j(D_n)}
		\leq \left(1+\frac{\Delta_1(D_n,\cP_{(N)}^o)}{V_1(D_n)}\right)^{j} - 1.
	\end{align*}
	Hence, again by Theorem \ref{grubervol} and Theorem \ref{thm:glasgrubmw} we conclude
	\begin{align}
		\limsup_{N\to\infty} N^{\frac{2}{n-1}}\, \Delta_j(D_n,\cP_{(N)}^o)&\leq \frac{1}{2} \del_{n-1}|\partial D_n|^{\frac{2}{n-1}}jV_j(D_n)\label{outfacetsupper},\\
		\liminf_{N\to\infty} N^{\frac{2}{n-1}}\, \Delta_j(D_n,\cP_{(N)}^o)&\geq \frac{1}{2} \div_{n-1}|\partial D_n|^{\frac{2}{n-1}} jV_j(D_n)\label{outfacetslower}.
	\end{align}
\qed
\subsection{Proof of Theorem \ref{thm:1} ii): Inscribed case with bounded number of facets}

As remarked in \cite{GMR:1994}, it was proved in \cite{BronshteinIvanov:1975} that there is an absolute constant $C_1>0$
such that for all sufficiently large $N$, there exists a polytope $P^b\in \cP_{(N)}^i$ such that
\begin{equation}\label{eqn:inside_facets_bound_polytope}
	|P^b| \geq (1-C_1 n N^{-\frac{2}{n-1}})|D_n|.
\end{equation}
Thus, by Theorem \ref{extended_isoperimetric} i), for any $j\in[n]$ and all sufficiently large $N$ it holds true that
\begin{equation*}
	V_j(P^b) \geq V_j(D_n)\left(\frac{|P^b|}{|D_n|}\right)^\frac{j}{n} \geq V_j(D_n)\left(1-C_1 n N^{-\frac{2}{n-1}}\right)^{\frac{j}{n}}.
\end{equation*}
Hence, for all sufficiently large $N$,
\begin{equation*}
	\Delta_j (D_n,\cP_{(N)}^i) \leq V_j(D_n) - V_j(P^b) \leq V_j(D_n) \left(1-\left(1-C_1 n N^{-\frac{2}{n-1}}\right)^\frac{j}{n}\right),
\end{equation*}
and therefore
\begin{equation}\label{eqn:inside_facets_bound}
	\limsup_{N\to \infty} N^{\frac{2}{n-1}}\, \Delta_j (D_n,\cP_{(N)}^i) \leq C_1 jV_j(D_n).
\end{equation}
This proves Theorem \ref{thm:1} ii). \qed


\subsection{Proof of Theorem \ref{thm:1} iii): General position and bounded number of vertices}

Let $P_N:=\conv\{X_1,\dotsc,X_N\}$ be a random polytope generated by $N$ random vertices $X_1,\dotsc,X_N$ chosen independently and uniformly from $\partial D_n$.
Set
\begin{equation*}
	r=r(n,j,N) := \left(\frac{\E V_j(P_N)}{V_j(D_n)}\right)^{-\frac{1}{j}} \quad\text{ and }\quad
	R=R(n,N) := \left(\frac{\E |P_N|}{|D_n|}\right)^{-\frac{1}{n}}
\end{equation*}
so that $\E V_j(rP_N) = V_j(D_n)$ and $\E |RP_N| = |D_n|$.
In the proof of \cite[Thm.\ 1]{LSW:2006} (more specifically, \cite[Eq.\ (9) and (28)]{LSW:2006}), it was shown that there exists an absolute constant $C$ such that for all sufficiently large $N$,
\begin{equation*}
	\E\Delta_n((1-c_{n,N})D_n , P_N) \leq C |D_n| N^{-\frac{2}{n-1}}
\end{equation*}
where $c_{n,N}$ is determined by
\begin{equation*}
	(1-c_{n,N})^n |D_n| = \E |P_N|.
\end{equation*}
(See \cite[Eq.\ (10)]{LSW:2006} and  \cite[Eq.\ (3.14)]{GroteWerner:2017} for similar results.) 
Hence $R=(1-c_{n,N})^{-1}$, and since $R\to 1$ as $N\to \infty$ we derive that
\begin{equation}\label{eqn:upper_bound_vertices_LSW}
	\E \Delta_n(D_n, RP_N) \leq C R^n |D_n|  N^{-\frac{2}{n-1}} \leq C_0 |D_n|  N^{-\frac{2}{n-1}}
\end{equation}
for some absolute constant $C_0$ when $N$ is large enough.
We aim to show that there exists an absolute constant $c_4$ such that
\begin{equation}\label{eqn:upper_bound_vertices}
	\limsup_{N\to \infty} N^{\frac{2}{n-1}}\, \E\Delta_j(D_n, rP_N) \leq c_4 \min \left\{ 1 , \frac{j\ln n}{n}\right\} V_j(D_n),
\end{equation}
which will yield the desired upper bound since $\Delta_j(D_n,\cP_N)\leq \E\Delta_j(D_n,rP_N)$.

Let $\varepsilon>0$ be arbitrary.
By the aforementioned result of Affentranger in Theorem \ref{affentranger:thm}, there exists $N_0\in \N$ such that for all $N\geq N_0$,
\begin{align*}
	\E V_j(P_N) &\leq V_j(D_n) \left(1- \frac{1-\varepsilon}{2} j \alpha(n,j) N^{-\frac{2}{n-1}}\right)
	\intertext{and}
	\E |P_N| & \geq |D_n| \left(1-\frac{1+\varepsilon}{2} n \alpha(n,n) N^{-\frac{2}{n-1}} \right).
\end{align*}
This yields
\begin{equation*}
	\limsup_{N\to \infty} N^{\frac{2}{n-1}} \, (R-r) \leq \frac{1+\varepsilon}{2} \alpha(n,n) - \frac{1-\varepsilon}{2} \alpha(n,j).
\end{equation*}
Similarly, we obtain
\begin{equation*}
	\liminf_{N\to \infty} N^{\frac{2}{n-1}} \, (R-r) \geq \frac{1-\varepsilon}{2} \alpha(n,n) - \frac{1+\varepsilon}{2} \alpha(n,j).
\end{equation*}
We have $1 \leq \alpha(n,j) \leq C_1$ and
\begin{equation*}
	1+\frac{2}{n^2} \leq \frac{\alpha(n,n)}{\alpha(n,j)} \leq 1 + \min\left\{ \frac{1}{j} , 3\frac{\ln n}{n}\right\}, \quad \forall n\geq 2,\; \forall j\in[n-1], 
\end{equation*}
which is proven in Appendix \ref{sec:asymptotic_estimates}.
Since $\varepsilon>0$ was chosen arbitrarily, this yields
\begin{equation*}
	\limsup_{N\to \infty} N^{\frac{2}{n-1}} \, (R-r) 
	\leq \frac{\alpha(n,j)}{2} \left(\frac{\alpha(n,n)}{\alpha(n,j)}-1\right)
	\leq  \frac{3C_1}{2} \min\left\{\frac{1}{j}, \frac{\ln n}{n}\right\},
\end{equation*}
and $\liminf_{N\to \infty} N^{\frac{2}{n-1}} \, (R-r) > 0$.
Thus, there exists $N_1\in \N$ such that for all $N\geq N_1$ 
\begin{equation}\label{eqn:R_bounds}
	0 \leq R-r \leq 2C_1 \min\left\{\frac{1}{j}, \frac{\ln n}{n}\right\} N^{-\frac{2}{n-1}}.
\end{equation}
Since $R\geq r$, we deduce that
\begin{equation*}
	D_n \cap RP_N \subset (1+R-r) \left( D_n \cap rP_N\right),
\end{equation*}
which yields
\begin{equation*}
	|D_n\cap RP_N| - |D_n\cap rP_N| \leq \left((1+R-r)^n-1\right) |D_n\cap rP_N|
\end{equation*}
 by monotonicity. Moreover,
\begin{equation*}
	(1+R-r)^n \leq 1+ C_2 n (R-r) \quad \text{and} \quad R^n-r^n \leq C_3 n (R-r)
\end{equation*}
for some absolute constants $C_2,C_3>0$ and all sufficiently large $N$.
Thus, when $N$ is large enough,
\begin{align*}
	|\Delta_n(D_n,RP_N)-\Delta_n(D_n,rP_N)| 
	&\leq |RP_N|-|rP_N| + 2 \left(|D_n\cap RP_N| - |D_n\cap rP_N|\right)\\
	&\leq (R^n-r^n) |P_N| + 2\left((1+R-r)^n-1\right) |D_n\cap P_N|\\
	&\leq (2C_2+C_3) n (R-r) |D_n|\\
	&\leq \underbrace{2C_1(2C_2+C_3)}_{=:C_4} \min\left\{\frac{n}{j}, \ln n \right\} |D_n| N^{-\frac{2}{n-1}}.
\end{align*}
Since $\Delta_n(D_n,rP_N) \geq |D_n| - |D_n\cap rP_N|$, this yields that for all $N\geq N_1$,
\begin{equation*}
	|D_n \cap rP_N| \geq |D_n| \left( 1 - C_4\min\left\{\frac{n}{j}, \ln n \right\} N^{-\frac{2}{n-1}} - \frac{\Delta_n(D_n,RP_N)}{|D_n|}\right)_+
\end{equation*}where   $(x)_+ := \max\{x,0\}$ for $x\in\R$. 
By Theorem \ref{extended_isoperimetric} i) we derive
\begin{align}
	\E \Delta_j(D_n, rP_N) 
	&= V_j(D_n) + \E V_j(rP_N) - 2 \E V_j(D_n\cap rP_N) \notag\\
	&\leq 2V_j(D_n) - 2 V_j(D_n) \E\left(\frac{|D_n\cap rP_N|}{|D_n|}\right)^{\frac{j}{n}} \notag\\
	&\leq 2V_j(D_n) - 2 V_j(D_n) \E \left(1-C_4\min\left\{\frac{n}{j}, \ln n \right\} N^{-\frac{2}{n-1}} - \frac{\Delta_n(D_n,RP_N)}{|D_n|}\right)_+^{\frac{j}{n}}.
	\label{eqn:upper_bound_vertices_2}
\end{align}
To continue, we would like to apply a Bernoulli-type inequality, but to do so we need to verify that
\begin{equation}\label{eqn:bernoulli_bound}
	Z_N:=C_4\min\left\{\frac{n}{j}, \ln n \right\} N^{-\frac{2}{n-1}} + \frac{\Delta_n(D_n,RP_N)}{|D_n|} < 0.8
\end{equation}
holds true with high probability if $N$ is large. Consider the event $A_N:=\{P_N:|P_N| > \frac{1}{2} |D_n|\}$.
If $P_N\in A_N$ and if $R^n<5/4$, then
\begin{equation*}
	\Delta_n(D_n, RP_N) 
	\leq |RD_n\setminus P_N| 
	= R^n|D_n| - |P_N|
	< \frac{3}{4} |D_n|.
\end{equation*}
Since $R\to 1$ as $N\to \infty$, this shows that \eqref{eqn:bernoulli_bound} holds true for the event $A_N$ for sufficiently large $N$.
Furthermore, using Chebyshev's inequality,  a variance bound obtained by Reitzner \cite[Thm.\ 8]{Reitzner:2003} and Theorem \ref{affentranger:thm} for $j=n$, we bound the probability of the complementary event $A_N^c$ by
\begin{equation}\label{eqn:upper_bound_event}
	\Pro(A_N^c)
	\leq \Pro\left(\Big||P_N| - \E|P_N|\Big| \geq \E|P_N|-\frac{1}{2}|D_n|\right) 
	\leq \frac{\operatorname{Var}(|P_N|)}{(\E|P_N|-\frac{1}{2}|D_n|)^2} \leq c(n) N^{-1-\frac{4}{n-1}}
\end{equation}
for some positive constant $c(n)$ that only depends on the dimension $n$.
The trivial upper bound
\begin{equation*}
	X_N := \left(1-Z_N\right)_+^{\frac{j}{n}} \leq 1
\end{equation*}
and $X_N\geq 0$ together yield 
\begin{equation*}
    \E X_N  \geq \E[X_N|A_N]\Pro(A_N) = \E[X_N|A_N]-\E[X_N|A_N]\Pro(A_N^c) \geq \E[X_N|A_N]-\Pro(A_N^c).
\end{equation*}
Next, we apply the Bernoulli-type inequality $(1-z)^t \geq 1-3tz$ for $z\in[0,0.8)$ and $t>0$ to derive that under the event $A_N$, for all sufficiently large $N$ any realization of the random variable $X_N$ satisfies
\begin{equation*}
    X_N \geq 1-\frac{3j}{n} Z_N = 1-3C_4\min\left\{1, \frac{j\ln n}{n} \right\} N^{-\frac{2}{n-1}} -\frac{3j}{n} \frac{\Delta_n(D_n,RP_N)}{|D_n|}.
\end{equation*}
Therefore, by the previous two inequalities, as well as \eqref{eqn:upper_bound_vertices_LSW}, \eqref{eqn:upper_bound_vertices_2} and \eqref{eqn:upper_bound_event}, we finally  obtain that for large enough $N$,
\begin{align*}
	\E \Delta_j(D_n, rP_N) &\leq 2V_j(D_n)(1-\E X_N)
	\leq 2V_j(D_n)(1-\E[X_N|A_N]+\Pro(A_N^c))\\
	&\leq 2 V_j(D_n) \left( 3C_4 \min \left\{1, \frac{j\ln n}{n}\right\} N^{-\frac{2}{n-1}} + \frac{3j}{n} \frac{\E \Delta_n(D_n,RP_N)}{|D_n|} + c(n) N^{-1-\frac{4}{n-1}}\right)\\
	&\leq 6 \left(C_4+C_0+ c(n) N^{-1-\frac{2}{n-1}} \right) \min \left\{1, \frac{j\ln n}{n}\right\}  V_j(D_n) N^{-\frac{2}{n-1}}.
\end{align*}
Thus, \eqref{eqn:upper_bound_vertices} holds true. \qed

\subsection{\texorpdfstring{Proof of Theorem \ref{thm:1} iv): General position and bounded number of facets}{Proof of Theorem 1 iv)}}\label{arbitraryfacetspf}

We shall show that there is an absolute constant $C$ and an absolute constant $c_0\in[n-1]$ such that for all sufficiently large $N$, there exists a polytope $P_{(N)}\in\cP_{(N)}$ such that
\begin{equation}\label{recursive}
	\Delta_j(D_n,P_{(N)}) \leq C V_j(D_n)\NN, \quad \forall j\in\{n-c_0,\ldots,n\}.
\end{equation}
We will use a recursive argument to prove \eqref{recursive}. The main idea is to use Theorem \ref{extended_isoperimetric} ii) and bounds on $\Delta_j(D_n,P_{(N)})$ and $\Delta_{j+1}(D_n,P_{(N)})$ to derive an upper bound on $\Delta_{j-1}(D_n,P_{(N)})$, iterating from $j=n-1$ to $j=n-c_0+1$. More specifically, we show that in the $k$th step of the recursion, $\Delta_{n-k-1}(D_n,P_{(N)}) \leq C_k\Delta_{n-k}(D_n,P_{(N)})$ for some absolute constant $C_k$. The hypothesis $j\geq n-c_0$ is necessary because the constants $C_k$ blow up fast. To prove the existence of the polytope $P_{(N)}$, we use the following result of Kur \cite{Kur:2017} to initialize the recursion.

\begin{theorem}[Remark 2.5 in \cite{Kur:2017}]\label{kur2}
There exists an absolute constant $C_0$ such that for every $n \geq 2$ and $N \geq n^n$ there exists a polytope $P_{(N)}$ in $\R^n$ with at most $N$ facets which satisfies both of the following inequalities simultaneously:
\begin{equation}\label{kurSA}
	\Delta_n(D_n,P_{(N)}) \leq C_0|D_n|\NN, \qquad \Delta_{n-1}(D_n, P_{(N)}) \leq 4C_0 V_{n-1}(D_n) \NN.
\end{equation}
\end{theorem}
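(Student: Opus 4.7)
The plan is to construct $P_{(N)}$ as an explicit inscribed polytope whose facets are tangent to a ball slightly smaller than $D_n$, and then deduce the $V_{n-1}$-bound from the $V_n$-bound via the isoperimetric hierarchy of Theorem~\ref{extended_isoperimetric}. Fix $\theta>0$ so that $N\theta^{n-1}\asymp n|D_n|/|D_{n-1}|$, which by the hypothesis $N\geq n^n$ forces $\theta\lesssim 1/n$. Take a deterministic $\theta$-cover $\{u_1,\dots,u_N\}\subset\Sn$ (every point of $\Sn$ lies within angular distance $\theta$ of some $u_i$), set $\delta:=1-\cos\theta\asymp \theta^2/2$, and define
\[
	P_{(N)} := \bigcap_{i=1}^N \bigl\{x\in\R^n : u_i\cdot x \leq 1-\delta\bigr\}.
\]
The covering property forces every vertex $v$ of $P_{(N)}$ to satisfy $\|v\|\leq(1-\delta)/\cos\theta = 1$, so $P_{(N)}\subset D_n$ and has at most $N$ facets.

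For the volume inequality I would decompose $D_n\setminus P_{(N)}$ along the radial Voronoi partition of $\R^n$ generated by $\{u_i\}$, integrate $(1-\rho(w)^n)/n$ on each wedge with radial function $\rho(w)=(1-\delta)/\cos\phi(w)$ (where $\phi(w)$ is the angle from $w$ to the nearest $u_i$), and Taylor-expand in $\phi$ to obtain a per-wedge contribution of order $|D_{n-1}|\theta^{n+1}/(n+1)$. Summing over the $N$ wedges and invoking $N|D_{n-1}|\theta^{n-1}\asymp n|D_n|$ yields
\[
	\Delta_n(D_n,P_{(N)}) \asymp \frac{n|D_n|}{n+1}\cdot\theta^2
		\asymp \frac{n|D_n|}{n+1}\left(\frac{n|D_n|}{|D_{n-1}|}\right)^{\!\frac{2}{n-1}}\!\NN,
\]
and the prefactor $\tfrac{n}{n+1}\bigl(n|D_n|/|D_{n-1}|\bigr)^{2/(n-1)}\asymp\tfrac{n}{n+1}(\sqrt{2\pi n})^{2/(n-1)}$ is bounded by an absolute constant uniformly in $n\geq 2$, yielding the first inequality with some $C_0$.

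For the $V_{n-1}$ inequality, $P_{(N)}\subset D_n$ forces $\Delta_{n-1}(D_n,P_{(N)}) = V_{n-1}(D_n) - V_{n-1}(P_{(N)})$. The extended isoperimetric inequality \eqref{eqn:extended_isoperimetric} gives $V_{n-1}(P_{(N)})/V_{n-1}(D_n)\geq\bigl(|P_{(N)}|/|D_n|\bigr)^{(n-1)/n}$, whence a Bernoulli expansion produces
\[
	\Delta_{n-1}(D_n,P_{(N)}) \leq V_{n-1}(D_n)\left(1-\Bigl(1-\frac{\Delta_n(D_n,P_{(N)})}{|D_n|}\Bigr)^{\!(n-1)/n}\right) \leq \frac{n-1}{n}V_{n-1}(D_n)\cdot\frac{\Delta_n(D_n,P_{(N)})}{|D_n|},
\]
and substituting the volume bound gives $\Delta_{n-1}(D_n,P_{(N)})\leq 4C_0\,V_{n-1}(D_n)\,\NN$ once the factor $4$ absorbs the isoperimetric and Taylor constants.

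The main obstacle is controlling the per-wedge Taylor expansion: the exact identity $\rho(w)^n=(1-\delta)^n/\cos^n\phi(w)$ must be approximated to leading order, which is reliable only when $n\theta^2$ is small. This is precisely what the hypothesis $N\geq n^n$ secures: since $\theta\asymp N^{-1/(n-1)}\leq n^{-n/(n-1)}$, one has $n\theta^2\lesssim 1/n$, validating both the Taylor approximation of the wedge integrand and the uniform boundedness of the dimensional prefactor $(n|D_n|/|D_{n-1}|)^{2/(n-1)}$.
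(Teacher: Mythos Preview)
This theorem is not proved in the paper: it is quoted from \cite{Kur:2017} (Remark~2.5 there) and used as a black-box input to initialize the recursion in the proof of Theorem~\ref{thm:1}~iv). Kur's polytope is a randomized construction in \emph{general position}, as the probabilistic form of $C_0$ in Remark~\ref{gilconstants} indicates.

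Your attempt builds an \emph{inscribed} polytope, and the wedge calculation overreaches. You bound each Voronoi cell's contribution to $|D_n|-|P_{(N)}|$ by $|D_{n-1}|\theta^{n+1}/(n+1)$ --- the value for a spherical cap of radius exactly $\theta$ --- and then sum over $N$ wedges invoking $N|D_{n-1}|\theta^{n-1}\asymp n|D_n|$. But that relation says the $N$ caps of radius $\theta$ have total area $|\partial D_n|$, i.e.\ covering density~$1$, which is incompatible with their forming a genuine cover: spherical caps cannot tile, and the minimal covering density in dimension $n-1$ grows at least linearly in $n$ (Coxeter--Few--Rogers). So either no $\theta$-cover with $N$ points exists for your $\theta$, or, taking the true minimal covering radius, one has $N|D_{n-1}|\theta^{n-1}\gtrsim n\cdot n|D_n|$ and the summed per-wedge bound becomes
\[
	N\cdot\frac{|D_{n-1}|\theta^{n+1}}{n+1}\;\asymp\;n\,|D_n|\,\theta^2\;\asymp\;n\,|D_n|\,N^{-\frac{2}{n-1}},
\]
precisely the Bronshtein--Ivanov rate \eqref{eqn:inside_facets_bound_polytope} that the paper already records for $\cP_{(N)}^i$ in Theorem~\ref{thm:1}~ii), with the dimensional factor $n$ that the absolute constant $C_0$ is supposed to eliminate. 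General position, not inscription, is what buys that improvement --- compare $\div_{n-1}\asymp n$ for circumscribed polytopes with $\ldiv_{n-1}=O(1)$ for arbitrary position in Section~\ref{sec:Del_Div}.

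A minor separate point: your Bernoulli step $1-(1-x)^{(n-1)/n}\leq\tfrac{n-1}{n}x$ has the wrong direction for exponents in $(0,1)$ (concavity gives $\geq$); the valid inequality $1-(1-x)^{(n-1)/n}\leq x$ would still suffice for your $V_{n-1}$ estimate, so this is not the real obstacle.
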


\begin{remark}\label{gilconstants}
It was shown in \cite{Kur:2017} that
\begin{align*}
	C_0 &= \int_0^1 t^{-1}\left(1-e^{-t\ln 2}\right)\,dt + \int_0^\infty e^{-e^t\ln2}\,dt+O\left(\frac{1}{\sqrt{n}}\right) \\
	&=\gamma+\ln\ln 2 - 2 \mathrm{Ei}(-\ln 2) + O\left(\frac{1}{\sqrt{n}}\right)
	=0.9680448\ldots+O\left(\frac{1}{\sqrt{n}}\right)
\end{align*}
where $\mathrm{Ei}(x)=-\int_{-x}^\infty t^{-1}e^{-t}\,dt$ is the exponential integral and $\gamma=0.5772\ldots$ is the Euler--Mascheroni constant. It was also shown in \cite{Kur:2017} that the inequalities \eqref{kurSA} hold simultaneously provided $N\geq 10^n$, but with a different constant $C_0$.
\end{remark}


\paragraph{\texorpdfstring{Proof of Theorem \ref{thm:1} iv)}{Proof of Theorem 1 iv)}}

We argue by induction on $j\in[n-1]$ to show that for all $N\geq \max\{n^n, (10\cdot 4^j C_0)^{\frac{n-1}{2}}\}$, the polytope $P_{(N)}$ from Theorem \ref{kur2} satisfies
\begin{equation}\label{eqn:induction_hyp}
	\Delta_{n-k}(D_n, P_{(N)}) \leq (9\cdot 2^{k-1}-5) C_0  V_{n-k}(D_n)\NN, \qquad \text{for all $k\in \{0,\dotsc,j\}$.}
\end{equation}
Since $\Delta_{n-j}(D_n,\cP_{(N)}) \leq \Delta_{n-j}(D_n,\cP_{(N)}^o)$, this only improves the upper bound for $\cP_{(N)}$ if $j$ is so small that $(9\cdot 2^{j-1}-5)C_0\leq c_2 n$. Hence in the upper bound in Theorem \ref{thm:1} iv), we restrict $j\in \{n-c_0,\dotsc,n\}$ for some absolute constant $c_0\in[n-1]$.

The statement \eqref{eqn:induction_hyp} is true for $j=1$ by Theorem \ref{kur2}. So let $j\geq 1$, and we will show that the statement \eqref{eqn:induction_hyp} holds true for $j+1$.
Inequality \eqref{kurSA} implies that for all $N\geq n^n$,
	\begin{equation}\label{inter1}
		|D_n \cap P_{(N)}| 
		=|D_n\cup P_{(N)}|-|D_n\triangle P_{(N)}| \geq|D_n|-|D_n\triangle P_{(N)}| 
		\geq (1-C_0\NN)|D_n|. 
	\end{equation}
	Hence, for all $N\geq n^n$,
 	\begin{equation}\label{volumeupper1}
		|P_{(N)}| =2|D_n \cap P_{(N)}|-|D_n|+|D_n \triangle P_{(N)}| 
		\geq 2|D_n \cap P_{(N)}| - |D_n|
		\geq (1-2C_0\NN)|D_n|.
	\end{equation}
By Theorem \ref{extended_isoperimetric} i), \eqref{inter1} and Bernoulli's inequality we obtain
\begin{equation}\label{lowerint}
	V_{n-j-1}(D_n\cap P_{(N)}) 
	\geq V_{n-j-1}(D_n) \left(\frac{|D_n\cap P_{(N)}|}{|D_n|}\right)^{\frac{n-j-1}{n}}
	\geq \left(1 - C_0\NN\right) V_{n-j-1}(D_n)
\end{equation}
for all $N\geq \max\{n^n, C_0^{\frac{n-1}{2}}\}$.
Analogously, by \eqref{volumeupper1} we derive
\begin{equation*}
	V_{n-j+1}(P_{(N)}) 
	\geq V_{n-j+1}(D_n)\left(\frac{|P_{(N)}|}{|D_n|}\right)^{\frac{n-j+1}{n}} 
	\geq \left(1-2C_0\NN\right)V_{n-j+1}(D_n)
\end{equation*}
for all $N\geq \max\{n^n, (2C_0)^\frac{n-1}{2}\}$.
The monotonicity of $V_{n-j}$ and the induction hypothesis \eqref{eqn:induction_hyp} imply
\begin{equation}\label{ru2}
	V_{n-j}(P_{(N)})
 	\leq V_{n-j}(D_n) + \Delta_{n-j}(D_n, P_{(N)})
	\leq \left(1+ (9\cdot 2^{j-1}-5) \NN\right) V_{n-j}(D_n)
\end{equation}
for all $N$ large enough.
Moreover, by Theorem \ref{extended_isoperimetric} ii) we have
\begin{align}\label{upperunion}
	V_{n-j-1}(P_{(N)}) 
	&\leq \frac{V_{n-j-1}(D_n)V_{n-j+1}(D_n)}{V_{n-j}(D_n)^2}\cdot\frac{V_{n-j}(P_{(N)})^2}{V_{n-j+1}(P_{(N)})}\nonumber\\
	&\leq \frac{\left(1+(9\cdot 2^{j-1}-5)C_0 \NN\right)^2}{\left(1-2C_0\NN\right)} V_{n-j-1}(D_n) \nonumber \\
	&\leq \left(1+(9\cdot 2^j -7) C_0 \NN\right) V_{n-j-1}(D_n)
\end{align}
for all $N\geq \max\{n^n, (10\cdot4^{j+1}C_0)^\frac{n-1}{2}\}$. 
Finally, by \eqref{lowerint} and \eqref{upperunion} we derive
\begin{align*}
	\Delta_{n-j-1}(D_n, P_{(N)}) &= V_{n-j-1}(P_{(N)})+V_{n-j-1}(D_n) - 2V_{n-j-1}(D_n\cap P_{(N)})\\
	& \leq  (9\cdot 2^j -5)C_0 V_{n-j-1}(D_n)\NN
\end{align*}
for all $N\geq \max\{n^n, (10\cdot 4^{j+1} C_0)^{\frac{n-1}{2}}\}$. Hence \eqref{eqn:induction_hyp} holds true for $j+1$.
\qed

\section{Simultaneous approximation and the Wills functional}\label{WillsThmSection}

In this section we prove Corollary \ref{cor:sim_approx}, Theorem \ref{willsthm} and Theorem \ref{gilwillthm}.

\subsection{Proof of Corollary \ref{cor:sim_approx}: Simultaneous approximation of the Euclidean ball}

By \eqref{eqn:inscribed_ineq}, we derive that for any polytope $P\subset D_n$, 
\begin{equation*}
	 \max_{j\in[n]} \left\{1-\left(1-\frac{\Delta_1(D_n,P)}{V_1(D_n)}\right)^j \right\}
	 \leq \max_{j\in[n]} \frac{\Delta_j(D_n,P)}{V_j(D_n)} 
	 \leq \max_{j\in[n]} \left\{ 1-\left(1-\frac{\Delta_n(D_n,P)}{V_n(D_n)}\right)^{\frac{j}{n}}\right\}. 
\end{equation*}
Taking the minimum over $P\in \cP_N^i$ and the limit as $N\to \infty$, we conclude
\begin{align*}
	\limsup_{N\to \infty} N^{\frac{2}{n-1}} \min_{P\in \cP_N^i} \max_{j\in[n]} \frac{\Delta_j(D_n,P)}{V_j(D_n)} \leq \frac{n}{2} \del_{n-1} |\partial D_n|^{\frac{2}{n-1}},\\
	\liminf_{N\to \infty} N^{\frac{2}{n-1}} \min_{P\in \cP_N^i} \max_{j\in[n]} \frac{\Delta_j(D_n,P)}{V_j(D_n)} \geq \frac{n}{2} \div_{n-1} |\partial D_n|^{\frac{2}{n-1}}.
\end{align*}
Hence the corollary follows for the case $\cP_N^i$. Analogously, we derive the case $\cP_{(N)}^o$ from \eqref{eqn:circum_ineq}.
\qed

\subsection{Proof of Theorem \ref{willsthm}: Bounds for the Wills deviation for the Euclidean ball}

\paragraph{Proof of i).} The lower bound follows directly from Theorem \ref{thm:1} i) and \eqref{eqn:asymptotic_bounds} since
\begin{equation*}
	\liminf_{N\to\infty} N^{\frac{2}{n-1}} \, \Delta_\Sigma(D_n,\cC_N) 
	\geq \sum_{j=0}^n\liminf_{N\to\infty} N^{\frac{2}{n-1}} \, \Delta_j(D_n,\cC_N)
	\geq \frac{1}{2}\div_{n-1}|\partial D_n|^{\frac{2}{n-1}}\widehat W(D_n).
\end{equation*}
For the upper bound, we only prove the case $\cC_N=\cP_N^i$ as the case $\cC_N=\cP_{(N)}^o$ follows similarly by performing the obvious modifications. Let $\varepsilon>0$ be arbitrary. By Theorem \ref{grubervol}, there exists $N_0\in\N$ such that for all $N\geq N_0$ there exists $P^b_N\in \cP_N^i$ that satisfies
\begin{equation*}
	|P^b_N| \geq |D_n| \left(1- (1+\varepsilon) \frac{n}{2} \del_{n-1} |\partial D_n|^{\frac{2}{n-1}} N^{-\frac{2}{n-1}}\right).
\end{equation*}
Then by Theorem \ref{extended_isoperimetric} i), for all $N\geq N_0$ we have
\begin{equation*}
	V_j(D_n) - V_j(P^b_N) 
	\leq V_j(D_n) \left(1- \left( 1- (1+\varepsilon) \frac{n}{2} \del_{n-1} |\partial D_n|^{\frac{2}{n-1}} N^{-\frac{2}{n-1}}\right)^{\frac{j}{n}} \right). 
\end{equation*}
This implies that for all $N\geq N_0$,
\begin{equation*}
	\limsup_{N\to\infty} N^{\frac{2}{n-1}} \, \Delta_\Sigma(D_n,\cP_n^i) 
	\leq \sum_{j=0}^n \limsup_{N\to\infty} N^{\frac{2}{n-1}} \, \Delta_j(D_n, P^b_N)
	\leq \frac{1+\varepsilon}{2}\del_{n-1}|\partial D_n|^{\frac{2}{n-1}}\widehat W(D_n).
\end{equation*}
Since $\varepsilon>0$ was arbitrary, the result follows. \qed

\paragraph{Proof of ii) and iii).} Analogously to i), Theorem \ref{willsthm} ii) and iii) follow from Corollary \ref{thm:cor1}.\qed

\paragraph{Proof of iv).} Analogously to i), Theorem \ref{willsthm} iv) follows from summing  inequality \eqref{eqn:inside_facets_bound_polytope} over $j\in[n]$. \qed

\paragraph{Proof of v).} 
In the proof of Theorem \ref{thm:1} iii), specifically \eqref{eqn:upper_bound_vertices}, it was shown that when $N$ is sufficiently large, the random polytope $P_N\in A_N$ satisfies
\begin{equation*}
	\Delta_j(D_n,r_jP_N) \leq c_4 \frac{j\ln n}{n} V_j(D_n) N^{-\frac{2}{n-1}}, \qquad \forall j\in[n],
\end{equation*}
where $r_j:= \left(\frac{\E V_j(P_N)}{V_j(D_n)}\right)^{-\frac{1}{j}}$. Recall  that $P_N\in A_N$ holds with probability $\Pro(A) \geq 1-c(n) N^{-1-\frac{4}{n-1}}$.
Notice that by Theorem \ref{affentranger:thm} we have
\begin{equation*}
	\lim_{N\to \infty} N^{\frac{2}{n-1}}\, (r_j-1) = \frac{\alpha(n,j)}{2}.
\end{equation*}
By \eqref{eqn:increasing_est} in Appendix \ref{sec:asymptotic_estimates}, $\alpha(n,j)$ is strictly increasing in $j$, so if $N$ is large enough then
\begin{equation*}
	1\leq r_1 \leq \dots \leq r_n.
\end{equation*}
Furthermore, by  \eqref{eqn:alpha_rel_est} in Appendix \ref{sec:asymptotic_estimates} we derive
\begin{equation*}
	\limsup_{N\to \infty} N^{\frac{2}{n-1}}\, \left(1-\left(\frac{r_1}{r_j}\right)^j\right) 
	\leq \frac{j\alpha(n,1)}{2} \left(\frac{\alpha(n,j)}{\alpha(n,1)}-1\right)\\
	\leq C_1 \frac{j\ln n}{n}
\end{equation*}
for some absolute constant $C_1>0$.
Hence,
\begin{equation*}
	V_j(D_n\cap r_1 P_N) 
	= r_1^j V_j\left(\frac{1}{r_1}D_n\cap P_N\right) 
	\geq r_1^j V_j\left(\frac{1}{r_j} D_n\cap P_N\right) 
	= \left(\frac{r_1}{r_j}\right)^j V_j(D_n \cap r_j P_N),
\end{equation*}
which yields
\begin{equation*}
	\Delta_j(D_n, r_1 P_N) 
	\leq \Delta_j(D_n, r_jP_N) + 2 V_j(D_n) \left(1-\left(\frac{r_1}{r_j}\right)^j\right).
\end{equation*}
Thus, if $N$ is sufficiently large, then for all $j\in [n]$ we have
\begin{equation*}
	\Delta_j(D_n, r_1 P_N) 
	\leq \underbrace{(c_4+2C_1)}_{=:C_2} \frac{j \ln n}{n} V_j(D_n) N^{-\frac{2}{n-1}}
\end{equation*}
with high probability for some absolute constant $C_2>0$.
Hence, if $N$ is sufficiently large, then with high probability
\begin{equation*}
	\Delta_\Sigma(D_n, r_1P_N) \leq \sum_{j=1}^n C_2 \frac{\ln n}{n}  jV_j(D_n) N^{-\frac{2}{n-1}} = C_2 \frac{\ln n}{n} \widehat{W}(D_n) N^{-\frac{2}{n-1}}.
\end{equation*}
Since this holds true with positive probability, if $N$ is large enough there exists a realization $Q_N$ of $r_1P_N$ which verifies the upper bound in Theorem \ref{willsthm} v). \qed


\subsection{Proof of Theorem \ref{gilwillthm}: An upper bound for the Wills deviation for convex bodies}\label{proofGilWill}

Let $K$ be a convex body in $\R^n$ that admits a rolling ball from the inside. For each $j\in[n]$, choose $M:=\lfloor N/n \rfloor$ points $X_{1}^j,\ldots,X_{M}^j$ at random from $\partial K$ independently and according to the optimal density $\phi_j$  defined in \eqref{eqn:bestdensity}, and let $P^j_M:=\conv\{X_{1}^j,\ldots,X_{M}^j\}$. Using Theorem \ref{reitznerthm} as in \eqref{reitznerbest1}, we derive that for each $j\in[n]$,
\begin{equation}\label{eqn:bound_P_M}
	\lim_{M\to\infty} M^{\frac{2}{n-1}}\, \E\Delta_j(K,P^j_M)
	= \beta(n,j) \left(\,\,\int_{\partial K} H_{n-j}(K,x)^{\frac{n-1}{n+1}} H_{n-1}(K,x)^{\frac{1}{n+1}}\,d\mu_{\partial K}(x)\right)^{\frac{n+1}{n-1}}.
\end{equation}
Now consider the random polytope $P_N$ defined by
\begin{equation*}
	P_{N} := \conv\left(\bigcup_{j=1}^n P^j_M\right) = \conv \left(\bigcup_{j=1}^n \{X_{1}^j,\ldots,X_{M}^j\}\right).
\end{equation*}
Note that $P_{N}$ has at most $nM \leq N$ vertices. Thus, since $P^j_M\subset P_{N}$ for all $j\in[n]$, from \eqref{eqn:bound_P_M} we obtain
\begin{align*}
	\limsup_{N\to\infty} N^{\frac{2}{n-1}}\, \E\Delta_\Sigma(K,P_{N}) 
	&\leq \sum_{j=1}^n \limsup_{N\to\infty} M^{\frac{2}{n-1}}\, \E\Delta_j(K,P_{M}^j)\\
 	&\leq n^{\frac{2}{n-1}} \sum_{j=1}^n
		\beta(n,j)\left(\,\,\int_{\partial K}H_{n-j}(K,x)^{\frac{n-1}{n+1}} H_{n-1}(K,x)^{\frac{1}{n+1}}\,d\mu_{\partial K}(x)\right)^{\frac{n+1}{n-1}}.
\end{align*}
From this bound on the expectation, it follows that there exists a realization $Q_{N}\in\cP_N^i(K)$ of the random polytope $P_{N}$ that also satisfies the bound. This concludes the proof.\qed

\section{Dual volume approximation of convex bodies}\label{sec:dual_thm}

For $K\in \cK_0(\R^n)$, the \emph{radial function} $\rho_K: \S^{n-1} \to (0, \infty)$ is defined by $\rho_K(u) = \max \{ r>0 : ru \in K\}$.
Integrating with respect to polar coordinates,  from \eqref{eqn:radial_steiner} we derive that
\begin{equation*}
	\widetilde{V}_j(K) = V_j(D_n) \int_{\Sp} \rho_K(u)^j \, d\sigma(u).
\end{equation*}
Since $V_j(D_n) = \binom{n}{j} \frac{|D_n|}{|D_{n-j}|}$, we define the analytic extension
\begin{equation}\label{eqn:ball_vol_analytic}
	V_q(D_n) :=  \frac{\pi^{\frac{q}{2}} \Gamma(n+1)}{\Gamma(q+1) \Gamma(n-q+1)} \frac{\Gamma(\frac{n-q}{2} + 1)}{\Gamma(\frac{n}{2}+1)}, \quad \forall q\in[0,n],
\end{equation}
and set
\begin{equation*}
	\widetilde{V}_q(K) := \begin{cases}
	                      	V_{|q|}(D_n) \int\limits_{\S^{n-1}} \rho_K(u)^q \, d\sigma(u) & \text{if $q\in [-n,n]$,}\\
	                      	|D_n| \int\limits_{\S^{n-1}} \rho_K(u)^q \, d\sigma(u) & \text{else.}
	                      \end{cases}
\end{equation*}
The quantity $\widetilde{V}_q(K)$ is finite since the origin lies in the interior of $K$, and therefore $\rho_K(u)>0$ for all $u\in \Sp$. For $q\in \R\setminus\{0\}$ and $K,L\in\cK_0(\R^n)$, we define the {\it $q$th dual volume deviation} $\widetilde{\Delta}_q(K,L)$ by
\begin{equation*}
	\widetilde{\Delta}_q(K,L) := \widetilde{V}_q(K) + \widetilde{V}_q(K) - 2 \widetilde{V}_q(K\cap L).
\end{equation*}
For $q=0$, we set
\begin{equation*}
	\widehat{V}_0(K) := \lim_{q\to 0} \frac{1}{|q|} \widetilde{V}_q(K) = \int_{\S^{n-1}} \ln \rho_K(u) \, d\sigma(u)
\end{equation*}
and define
\begin{equation*}
	\widehat{\Delta}_0(K,L) := \widehat{V}_0(K) + \widehat{V}_0(K) - 2 \widehat{V}_0(K\cap L).
\end{equation*}

We will need the following  lemma, which is related to \cite[Thm.\ 4.1]{GVV:2003}.
\begin{lemma}\label{lem:dual_to_weighted}
	Let $K,L\in \cK_0(\R^n)$.
	\begin{enumerate}
		\item[i)] If $0<|q|\leq n$, then
			\begin{equation}
				\widetilde{\Delta}_q(K,L) 
				= V_{|q|}(D_n) \int_{\Sp} \left| \rho_K(u)^q-\rho_L(u)^q\right| \, d\sigma(u) 
				= \frac{|q|V_{|q|}(D_n)}{n|D_n|} \int_{K\triangle L} \|x\|^{q-n} \, dx.
			\end{equation}
			
		\item[ii)] If $|q|>n$, then
			\begin{equation}
				\widetilde{\Delta}_q(K,L) 
				= |D_n| \int_{\Sp} \left| \rho_K(u)^q-\rho_L(u)^q\right| \, d\sigma(u) 
				= \int_{K\triangle L} \|x\|^{q-n} \, dx.
			\end{equation}
			
		\item[iii)] If $q=0$, then
			\begin{equation}\label{eqn:dDelta0}
				\widehat{\Delta}_0(K,L) = \int_{\S^{n-1}} \left| \ln \frac{\rho_K(u)}{\rho_L(u)}\right| \, d\sigma(u) 
				= \frac{1}{n|D_n|} \int_{K\triangle L} \|x\|^{-n} \, dx.
			\end{equation}
	\end{enumerate}
\end{lemma}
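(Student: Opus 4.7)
The plan is to reduce both equalities in each of parts i)--iii) to two ingredients: an elementary pointwise identity for real numbers (together with the basic radial description of the intersection of star bodies) and a polar-coordinates computation on $\R^n$.

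For the first equality in each case, I would start from the observation that $K\cap L\in\cK_0(\R^n)$ and its radial function is $\rho_{K\cap L}(u)=\min\{\rho_K(u),\rho_L(u)\}$ for every $u\in\Sp$. Substituting into the definition of $\widetilde{V}_q$ gives
\begin{equation*}
	\widetilde{\Delta}_q(K,L)=c_q\int_{\Sp}\Bigl[\rho_K(u)^q+\rho_L(u)^q-2\min\{\rho_K(u),\rho_L(u)\}^q\Bigr]\,d\sigma(u),
\end{equation*}
where $c_q=V_{|q|}(D_n)$ when $|q|\le n$ and $c_q=|D_n|$ otherwise. For $q>0$, monotonicity of $t\mapsto t^q$ yields $\min\{\rho_K,\rho_L\}^q=\min\{\rho_K^q,\rho_L^q\}$, and the elementary identity $a+b-2\min\{a,b\}=|a-b|$ applied with $a=\rho_K^q,\ b=\rho_L^q$ produces the desired spherical integral. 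For $q<0$ the monotonicity is reversed, so one uses $\min\{\rho_K,\rho_L\}^q=\max\{\rho_K^q,\rho_L^q\}$ together with $a+b-2\max\{a,b\}=-|a-b|$ and tracks the sign. For the $q=0$ case one uses $\ln\min\{\rho_K,\rho_L\}=\min\{\ln\rho_K,\ln\rho_L\}$ and the same $a+b-2\min\{a,b\}=|a-b|$ identity applied to the logs, which directly matches $\widehat{V}_0$.

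For the second equality I would invoke polar coordinates in the form
\begin{equation*}
	\int_{\R^n}g(x)\,dx=n|D_n|\int_{\Sp}\int_0^\infty g(ru)\,r^{n-1}\,dr\,d\sigma(u),
\end{equation*}
since the spherical surface measure equals $n|D_n|$ times the probability measure $\sigma$. Taking $g(x)=\|x\|^{q-n}\,\mathbf{1}_{K\triangle L}(x)$ and noting that, along the ray through $u\in\Sp$, the symmetric difference $K\triangle L$ consists exactly of the points $ru$ with $\min\{\rho_K(u),\rho_L(u)\}\le r\le\max\{\rho_K(u),\rho_L(u)\}$, the inner radial integral evaluates for $q\ne 0$ to
\begin{equation*}
	\int_{\min\{\rho_K,\rho_L\}}^{\max\{\rho_K,\rho_L\}}r^{q-1}\,dr=\frac{|\rho_K(u)^q-\rho_L(u)^q|}{q},
\end{equation*}
and for $q=0$ to $\ln\bigl(\max\{\rho_K,\rho_L\}/\min\{\rho_K,\rho_L\}\bigr)=|\ln(\rho_K/\rho_L)|$. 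Dividing by $n|D_n|$ and multiplying by $c_q$ from the previous step gives the conversion factors $qV_{|q|}(D_n)/(n|D_n|)$ in part i), $q/n$ formally times $|D_n|/|D_n|=1$ in part ii), and $1/(n|D_n|)$ in part iii), which match the right-hand sides of the lemma.

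There is no deep obstacle here; the whole argument is essentially two lines, provided one is careful with bookkeeping. The only subtleties are (a) the sign convention for $q<0$, which one handles by noting that $c_q$ is always positive and that the ``min'' becomes a ``max'' after raising to the negative power, and (b) the consistent treatment of the endpoint $|q|=n$, at which $V_n(D_n)=|D_n|$ so the two prefactor conventions in the definition of $\widetilde{V}_q$ agree. The $q=0$ case is handled entirely in parallel by replacing $t^q$ with $\ln t$ throughout.
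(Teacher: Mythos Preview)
Your proposal is correct and follows essentially the same route as the paper: both arguments combine the identity $\rho_{K\cap L}=\min\{\rho_K,\rho_L\}$ with $a+b-2\min\{a,b\}=|a-b|$ for the first equality, and then the radial description of $K\triangle L$ together with polar coordinates for the second. One small bookkeeping slip: your displayed inner integral should read $\int_{\min}^{\max} r^{q-1}\,dr = |\rho_K^q-\rho_L^q|/|q|$ (not $/q$) for $q<0$, which is exactly what produces the $|q|$ in the lemma's prefactor; the paper sidesteps this by writing the integral first and only then identifying it with the absolute value.
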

\begin{proof}
	First, let $0<|q|\leq n$.
	Since the origin is an interior point of $K$ and $L$, for all $u\in \Sp$ we have
	\begin{equation*}
		\rho_{K\cap L}(u) = \min\{\rho_K(u),\rho_L(u)\}.
	\end{equation*}
	Also, by the elementary fact that for any $a,b\in\R$ 
	\begin{equation*}
		a+b -2\min\{a,b\} = |a-b| = \max\{a,b\} - \min\{a,b\},
	\end{equation*}
	we conclude
	\begin{align*}
		\widetilde{\Delta}_q(K,L) 
		&= V_{|q|} (D_n) \int_{\Sp} \left(\rho_K(u)^q + \rho_L(u)^q - 2\rho_{K\cap L}(u)^q\right) \, d\sigma(u)\\
		&= V_{|q|} (D_n)  \int_{\Sp} \left| \rho_K(u)^q-\rho_L(u)^q\right| \, d\sigma(u)\\
		&= |q|V_{|q|} (D_n)  \int_{\Sp} \int_{\min\{\rho_K(u),\rho_L(u)\}}^{\max\{\rho_K(u),\rho_L(u)\}} t^{q-1}\, dt\, d\sigma(u).
	\end{align*}
	Using polar coordinates and the fact that
	\begin{equation*}
		K\triangle L = \big\{tu : u\in \Sp, \min\{\rho_K(u),\rho_L(u)\}\leq t \leq \max\{\rho_K(u),\rho_L(u)\}\big\},
	\end{equation*}
	we derive
	\begin{equation*}
		\widetilde{\Delta}_q(K,L) = \frac{|q|V_{|q|} (D_n) }{n|D_n|} \int_{K\triangle L} \|x\|^{q-n}\, dx.
	\end{equation*}
	The cases  $|q|>n$ and $q=0$ follow from similar arguments.
\end{proof}

\begin{remark}
	Let $K,L\in \cK_0(\R^n)$. Then $\widetilde{\Delta}_n(K,L)=\Delta_n(K,L)$.
	Furthermore, since $\rho_K=1/h_{K^\circ}$ we have
	\begin{equation*}
		\widetilde{\Delta}_{-1}(K,L) 
		= V_1(D_n) \int_{\Sp} \left|h_{K^\circ}(u)-h_{L^\circ}(u)\right| \, d\sigma(u)
		= V_1(D_n) \delta_1(K^\circ,L^\circ) \leq \Delta_1(K^\circ,L^\circ),
	\end{equation*}
	with equality if and only if $K^\circ\cup L^\circ = (K\cap L)^\circ$ (see Appendix \ref{sec:width_rel}).
	A similar observation was also made in \cite[Rmk.\ 3]{GlasauerGruber:1997}.
	
	For $q=0$, we use \eqref{eqn:dDelta0} and derive that
	\begin{equation*}
		\int_{\S^{n-1}} \left|\ln \frac{h_K(u)}{h_L(u)}\right| \, d\sigma(u) =\frac{1}{n|D_n|} \int_{K^\circ \triangle L^\circ} \|x\|^{-n}\, dx.
	\end{equation*}
\end{remark}

Recall that for a continuous function $\psi:\R^n\to (0,\infty)$, the $\psi$-weighted volume difference is defined by
\begin{equation*}
	\Delta_n^\psi(K,L) := \int_{K\triangle L} \psi(x) \, dx.
\end{equation*}
By Lemma \ref{lem:dual_to_weighted}, we can identify approximation in the $q$th dual volume deviation with approximation under the $\psi$-weighted volume difference.
The following results on weighted volume approximation were obtained by Ludwig \cite{Ludwig:1999} for convex bodies of class $C_+^2$ and extended to $C^2$ by B\"or\"oczky \cite{Boroczky:2000a}. Note that Ludwig's proof for $\cC_N=\cP_N$ and $\cC_N=\cP_{(N)}$ also gives the corresponding results for the inscribed case $\cC_N= \cP_N^i(K)$ (see \cite{Gruber:1993a}) and the circumscribed case $\cC_N = \cP_N^o(K)$ (see \cite{GlasauerGruber:1997}), where only the dimensional constants in the limit need to be changed.

\begin{theorem}[{\cite{Ludwig:1999}}]\label{thm:weighted_approx}
	Let $K\subset \R^n$ be a convex body of class $C^2$. 
	Furthermore, let $\cC_N$ be either $\cP_N^i(K)$, $\cP_N$, $\cP_{(N)}^o(K)$, or $\cP_{(N)}$, and set $\gamma_{n-1}$ as in \eqref{eqn:c_del_div}.
	If $\psi:\R^n\to (0,\infty)$ is a  continuous function, then
	\begin{equation}\label{eqn:weighted_approx}
		\lim_{N\to \infty} N^{\frac{2}{n-1}}  \, \Delta_n^{\psi}(K, \cC_N) 
		= \frac{\gamma_{n-1}}{2}\, \left(\,\int_{\partial K} \psi(x)^{\frac{n-1}{n+1}} H_{n-1}(K,x)^{\frac{1}{n+1}} \, d\mu_{\partial K}(x) \right)^{\frac{n+1}{n-1}}.
	\end{equation}
\end{theorem}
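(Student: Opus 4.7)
My plan is to follow the classical localization strategy due to Gruber and Ludwig, reducing the global approximation problem to a superposition of local problems on small boundary patches and then optimizing the density of vertices/facets by Hölder's inequality. The main technical input is the local asymptotic formulas for approximating elliptic paraboloids, which after a volume-preserving affine change of coordinates become asymptotic tiling problems in $\R^{n-1}$; these produce the four dimensional constants $\del_{n-1}, \div_{n-1}, \ldel_{n-1}, \ldiv_{n-1}$ according to whether one tiles by Delone triangulations, Dirichlet–Voronoi cells, Laguerre–Delone (power) triangulations, or Laguerre–Voronoi cells.

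First, I would set up the localization. Using the $C^2$ assumption, I cover $\partial K$ by finitely many small geodesic patches $U_1,\dotsc,U_m$ such that on each $U_i$, both $\psi$ and the principal curvatures are approximately constant, and $K$ is well approximated by its second-order osculating paraboloid $\Pi_i$. Let $d:\partial K\to(0,\infty)$ be a candidate ``density'' of approximation points with $\int_{\partial K} d\,d\mu_{\partial K}=1$, and assume that of the $N$ polytope features, approximately $N\int_{U_i} d\,d\mu_{\partial K}$ are used to handle $U_i$. The local contribution to $\Delta_n^\psi(K,\cdot)$ on $U_i$, after rescaling the paraboloid to unit curvature and applying the local asymptotic formula appropriate to $\cC_N$, is asymptotically
\begin{equation*}
    \frac{\gamma_{n-1}}{2}\,\psi(x_i)\,H_{n-1}(K,x_i)^{\frac{1}{n-1}}\,\mu_{\partial K}(U_i)\bigl(N\,d(x_i)\bigr)^{-\frac{2}{n-1}}\,\bigl(1+o(1)\bigr),
\end{equation*}
where $\gamma_{n-1}$ is the relevant tiling constant. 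Summing and letting the partition refine, this becomes
\begin{equation*}
    N^{-\frac{2}{n-1}}\,\frac{\gamma_{n-1}}{2}\int_{\partial K}\psi(x)\,H_{n-1}(K,x)^{\frac{1}{n-1}}\,d(x)^{-\frac{2}{n-1}}\,d\mu_{\partial K}(x).
\end{equation*}

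Next, I would optimize over $d$. Minimizing the above integral under $\int d\,d\mu_{\partial K}=1$ by Hölder's inequality with exponents $\tfrac{n+1}{n-1}$ and $\tfrac{n+1}{2}$ yields that the minimizer is proportional to $\psi^{\frac{n-1}{n+1}}H_{n-1}^{\frac{1}{n+1}}$, and the minimum value equals $\left(\int_{\partial K}\psi^{\frac{n-1}{n+1}}H_{n-1}^{\frac{1}{n+1}}\,d\mu_{\partial K}\right)^{\frac{n+1}{n-1}}$, giving the desired right-hand side of \eqref{eqn:weighted_approx}. The upper bound then follows by exhibiting an explicit polytope: discretize the boundary via an economic cap covering of Bronshtein type tuned to the optimal density, and on each cap use the rescaled optimal tiling (Delone, Voronoi, or Laguerre variant) to place the appropriate local vertices/facets. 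For the lower bound, one argues that any sequence $(P_N)$ with $P_N\in \cC_N$ induces (via the closest-facet or closest-vertex map) a feature density $d_N$ on $\partial K$, and the local asymptotic lower bounds for approximating paraboloids by polytopes with a prescribed number of vertices/facets — these are precisely the extremal problems defining $\gamma_{n-1}$ — produce the matching asymptotic lower bound after integration and a reverse Hölder step.

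The main obstacle is the lower bound in the general-position cases $\cC_N\in\{\cP_N,\cP_{(N)}\}$, where the local extremal problem is not the classical Delone/Voronoi problem but its Laguerre version: because vertices (resp.\ facet hyperplanes) need not lie on (resp.\ be tangent to) $\partial K$, the induced tiling of the tangent hyperplane is a power diagram whose cells carry weights reflecting the signed distance to $\partial K$. Establishing that one cannot beat the Laguerre constants $\ldel_{n-1},\ldiv_{n-1}$ locally requires a careful compactness/normalization argument on sequences of nearly extremal Laguerre tilings, together with semicontinuity of the local error functional; once this ingredient is in place, the integration and Hölder optimization proceed uniformly across all four classes $\cC_N$.
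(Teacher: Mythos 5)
The paper does not prove this theorem: it is cited verbatim from Ludwig \cite{Ludwig:1999} (who proved it for $\cC_N\in\{\cP_N,\cP_{(N)}\}$ and $C^2_+$ bodies) and B\"or\"oczky \cite{Boroczky:2000a} (who relaxed to $C^2$), with a remark that Ludwig's method also covers the inscribed and circumscribed cases once one swaps the Laguerre tiling constants for $\del_{n-1},\div_{n-1}$. So there is no ``paper's own proof'' to compare against, and your task is effectively to reconstruct the cited argument. Your outline is a faithful sketch of the Gruber--Ludwig localization strategy: covering $\partial K$ by patches on which $\psi$ and the curvatures are approximately constant, replacing $\partial K$ by osculating paraboloids and reducing to an extremal tiling problem in $\R^{n-1}$, writing the local error as $\sim\frac{\gamma_{n-1}}{2}\psi(x)H_{n-1}(K,x)^{1/(n-1)}(Nd(x))^{-2/(n-1)}\mu_{\partial K}(U_i)$, passing to the integral, and optimizing over the density $d$ by H\"older's inequality to get $d\propto\psi^{(n-1)/(n+1)}H_{n-1}^{1/(n+1)}$ and the value $\bigl(\int\psi^{(n-1)/(n+1)}H_{n-1}^{1/(n+1)}\,d\mu_{\partial K}\bigr)^{(n+1)/(n-1)}$. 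The exponent bookkeeping is consistent and matches the cited statement.

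As a proof, however, the sketch has the gaps one expects. The crux is the lower bound, which you flag as ``the main obstacle'' but leave entirely to a remark: one must show that any sequence $P_N\in\cC_N$ induces a feature density $d_N$ on $\partial K$, that the local approximation error is bounded below by the appropriate tiling constant uniformly over small patches, and that the H\"older step can be run in reverse via weak-$*$ compactness of the $d_N$ and the equality case of H\"older. One must also control the error from replacing $\partial K$ by its osculating paraboloid uniformly in $N$ (this is precisely where Ludwig needed $C^2_+$ and B\"or\"oczky did nontrivial work to get down to $C^2$, including handling boundary points of vanishing Gauss curvature where $H_{n-1}^{1/(n-1)}$ is zero), and one must rule out ``leakage'' of vertices or facet hyperplanes across patch boundaries, i.e.\ a single polytope feature serving several patches simultaneously. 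In the general-position cases the Laguerre/power-diagram structure, with its signed weights reflecting the displacement of vertices (or tangency hyperplanes) relative to $\partial K$, adds a compactness/normalization problem for near-extremal tilings that you name but do not resolve. These are the hard technical parts of \cite{Ludwig:1999,Boroczky:2000a}; identifying them is good, but your proposal stops where the actual work begins.
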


We are now ready to prove the following extension of Theorem \ref{dualthm}.
\begin{theorem}\label{thm:dualthm_gen}
	Let $K\subset \R^n$ be a convex body of class $C^2$ that contains the origin in its interior, let $\cC_N$ be either $\cP_N^i(K)$, $\cP_N$, $\cP_{(N)}^o(K)$, or $\cP_{(N)}$, and set $\gamma_{n-1}$ as in \eqref{eqn:c_del_div}.
	For $q\in \R$, set
	\begin{equation*}
		\Omega_q(K) := \int_{\partial K} \|x\|^{\frac{(q-n)(n-1)}{n+1}} H_{n-1}(K,x)^{\frac{1}{n+1}} \, d\mu_{\partial K}(x).
	\end{equation*}
	\begin{enumerate}
		\item[i)] If $0<|q|\leq n$, then 
			\begin{equation*}
				\lim_{N\to \infty} N^{\frac{2}{n-1}}\,\widetilde{\Delta}_q(K,\cC_N) 
					= \frac{\gamma_{n-1}}{2} \frac{|q| V_{|q|}(D_n)}{n|D_n|}\, \Omega_q(K)^{\frac{n+1}{n-1}}.
			\end{equation*}
	
		\item[ii)] If $|q|>n$, then
			\begin{equation*}
				\lim_{N\to \infty} N^{\frac{2}{n-1}}\,\widetilde{\Delta}_q(K,\cC_N) 
					= \frac{\gamma_{n-1}}{2}\, \Omega_q(K)^{\frac{n+1}{n-1}}.
			\end{equation*}
			
		\item[iii)] If $q=0$, then
			\begin{equation*}
				\lim_{N\to \infty} N^{\frac{2}{n-1}}\,\widehat{\Delta}_0(K,\cC_N) 
					= \frac{\gamma_{n-1}}{2} \frac{1}{n|D_n|}\, \Omega_0(K)^{\frac{n+1}{n-1}}.
			\end{equation*}
	\end{enumerate}
\end{theorem}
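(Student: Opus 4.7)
The plan is to reduce Theorem~\ref{thm:dualthm_gen} directly to Theorem~\ref{thm:weighted_approx} via Lemma~\ref{lem:dual_to_weighted}. For fixed $q\in\R$, define the weight $\psi_q(x):=\|x\|^{q-n}$ (where I interpret the case $q=0$ as $\psi_0(x)=\|x\|^{-n}$). Since $K\in\cK_0(\R^n)$ and the origin lies in $\operatorname{int}(K)$, we have $\operatorname{dist}(0,\partial K)>0$, so $\psi_q$ is continuous and strictly positive on an open neighborhood $U$ of $\partial K$. By Lemma~\ref{lem:dual_to_weighted}, for every $L\in\cK_0(\R^n)$ and every polytope $P\in\cC_N$ close enough to $K$ (so that $P\in\cK_0(\R^n)$ as well),
\begin{equation*}
	\widetilde{\Delta}_q(K,P) \;=\; c_q \int_{K\triangle P}\|x\|^{q-n}\,dx \;=\; c_q\,\Delta_n^{\psi_q}(K,P),
\end{equation*}
where $c_q = \frac{|q|V_{|q|}(D_n)}{n|D_n|}$ if $0<|q|\le n$, $c_q=1$ if $|q|>n$, and $c_0=\frac{1}{n|D_n|}$.

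The constant $c_q$ is independent of $P$, so minimizing over $\cC_N$ commutes with the scaling and gives $\widetilde{\Delta}_q(K,\cC_N) = c_q\,\Delta_n^{\psi_q}(K,\cC_N)$. I then apply Theorem~\ref{thm:weighted_approx} to the weight $\psi_q$ to obtain
\begin{equation*}
	\lim_{N\to\infty} N^{\frac{2}{n-1}}\,\Delta_n^{\psi_q}(K,\cC_N)
	= \frac{\gamma_{n-1}}{2}\,\Biggl(\int_{\partial K}\|x\|^{\frac{(q-n)(n-1)}{n+1}} H_{n-1}(K,x)^{\frac{1}{n+1}}\,d\mu_{\partial K}(x)\Biggr)^{\!\frac{n+1}{n-1}}
	= \frac{\gamma_{n-1}}{2}\,\Omega_q(K)^{\frac{n+1}{n-1}},
\end{equation*}
and multiplying by $c_q$ yields the three stated formulas. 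Specializing to $q=j\in\{1,\ldots,n\}$ recovers Theorem~\ref{dualthm} (using $V_j(D_n)\cdot j = jV_j(D_n)$), which serves as a useful sanity check.

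The only genuine technical point is that Theorem~\ref{thm:weighted_approx} is stated for a weight that is continuous on all of $\R^n$, whereas $\psi_q$ has a singularity at the origin whenever $q<n$. My approach to bypass this is to replace $\psi_q$ by a globally continuous, bounded and strictly positive weight $\tilde{\psi}_q$ that agrees with $\psi_q$ on a fixed neighborhood $U$ of $\partial K$ (e.g.\ smoothly cap off $\|x\|^{q-n}$ inside a small ball around the origin). Since $\widetilde{\Delta}_q(K,\cdot)$ is positive definite and continuous on $\cK_0(\R^n)$, any near-best-approximating sequence $(P_N)_{N\in\N}$ with $P_N\in\cC_N$ must satisfy $P_N\to K$ in the Hausdorff metric, and hence eventually $K\triangle P_N \subset U$. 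Thus $\Delta_n^{\psi_q}(K,P_N) = \Delta_n^{\tilde{\psi}_q}(K,P_N)$ for all sufficiently large $N$, so both asymptotic rates coincide. This is the step I expect to be the main (though mild) obstacle; it is routine but needs the observation that the extension can be chosen independently of the optimizer because $\operatorname{dist}(0,\partial K)$ is a fixed positive number.
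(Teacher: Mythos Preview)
Your proposal is correct and follows essentially the same route as the paper: reduce $\widetilde{\Delta}_q$ to a weighted volume difference via Lemma~\ref{lem:dual_to_weighted}, cap off the singular weight $\|x\|^{q-n}$ near the origin by a continuous surrogate, and invoke Theorem~\ref{thm:weighted_approx}. The paper makes your ``routine'' step more explicit by fixing $\varepsilon>0$ with $\varepsilon D_n\subset\operatorname{int}K$ and observing that any polytope $P$ not containing $\varepsilon D_n$ satisfies a uniform lower bound $\widetilde{\Delta}_q(K,P)\geq \eta>0$ (and likewise for the capped weighted volume difference), so that for large $N$ both minimizations may be restricted to polytopes containing $\varepsilon D_n$, where the two functionals coincide; this is exactly the content of your Hausdorff-convergence sketch, just phrased quantitatively.
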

\begin{proof}
Let $0<|q|<n$.
We would like to use the weight function $x\mapsto \|x\|^{q-n}$, which is discontinuous at the origin if $q<n$. We remedy this by the following standard arguments. Let $K\in\mathcal K_0(\R^n)$. Then there exists $\varepsilon_0>0$ such that for all $0<\varepsilon\leq \varepsilon_0$, the ball $\varepsilon D_n$ is also contained in the interior of $K$. We define the continuous weight function $\psi_\varepsilon$ by
\begin{equation*}
	\psi_{\varepsilon}(x) := \begin{cases}
	                        	\|x\|^{q-n} & \text{if $\|x\|>\varepsilon$},\\
	                        	\varepsilon^{q-n} & \text{if $\|x\|\leq \varepsilon$}.
	                        \end{cases}
\end{equation*}
Notice that $\psi_{\varepsilon}(x) = \|x\|^{q-n}$ for all $x\in \partial K$ and $\varepsilon \in (0,\varepsilon_0)$. Thus,
\begin{equation*}
	\Omega_q(K) = \int_{\partial K} \psi_\varepsilon(x)^{\frac{n-1}{n+1}} H_{n-1}(K,x)^{\frac{1}{n+1}}\, d\mu_{\partial K}(x), \quad \forall \varepsilon\in (0,\varepsilon_0),
\end{equation*}
where we extended the definition \eqref{eqn:dual_curvature_measure} of $\Omega_j$ to $q\in\R$.
By Lemma \ref{lem:dual_to_weighted}, if $P$ is a polytope that contains $\varepsilon D_n$ then
\begin{equation*}
	\widetilde{\Delta}_q(K,P) = \frac{|q| V_{|q|}(D_n)}{n|D_n|} \Delta_n^{\psi_{\varepsilon}}(K,P).
\end{equation*}
Observe that if $N$ is large enough, then any polytope $P\in \cC_N$ that is well-approximating contains $\varepsilon D_n$. To see this,
 for $u\in\S^{n-1}$ define the closed halfspace $H_\varepsilon^+(u) := \{x\in \R^n:  x \cdot u  \geq \varepsilon\}$ and set
\begin{equation*}
	\eta=\eta(K,q) := \min_{u\in \S^{n-1}} \widetilde{V}_q(K\cap H_\varepsilon^+(u)) > 0.
\end{equation*}
If $P$ does not contain $\varepsilon D_n$, then there exists $u\in \S^{n-1}$ such that $P\cap H_\varepsilon^+(u) =\varnothing$. In this case,
\begin{equation*}
	\widetilde{\Delta}_q(K,P) \geq \widetilde{V}_q(K) - \widetilde{V}_q(K\cap P) \geq \widetilde{V}_q(K\cap H_\varepsilon^+(u)) \geq \eta >0.
\end{equation*}
Since
\begin{equation*}
	\widetilde{V}_q(K\cap H_\varepsilon^+(u)) = \frac{|q| V_{|q|}(D_n)}{n|D_n|} \int_{K\cap H_\varepsilon^+(u)} \|x\|^{q-n}\, dx,
\end{equation*}
we also find that $\frac{|q| V_{|q|}(D_n)}{n|D_n|} \Delta_n^{\psi_{\varepsilon}}(K,P) \geq \eta$.
Furthermore, since $\frac{|q| V_{|q|}(D_n)}{n|D_n|} \Delta_n^{\psi_{\varepsilon}}(K,\cC_N)\to 0$ and $\widetilde{\Delta}_q(K,\cC_N) \to 0$ as $N\to \infty$, there exists $N_0>0$ such that for all $N\geq N_0$ we have $\widetilde{\Delta}_q(K,\cC_N) < \eta$ and $\frac{|q| V_{|q|}(D_n)}{n|D_n|} \Delta_n^{\psi_{\varepsilon}}(K,\cC_N)< \eta$. Thus, if $N\geq N_0$ then
\begin{align*}
	\widetilde{\Delta}_q(K,\cC_N) 
	&= \min \left\{ \widetilde{\Delta}_q(K,P): P\in \cC_N \text{ and $\varepsilon D_n\subset P$}\right\} \\
	&= \frac{|q| V_{|q|}(D_n)}{n|D_n|} \min \left\{ \Delta_n^{\psi_{\varepsilon}}(K,P) : P\in \cC_N \text{ and $\varepsilon D_n\subset P$}\right\} 
	= \frac{|q| V_{|q|}(D_n)}{n|D_n|} \Delta_n^{\psi_{\varepsilon}}(K,\cC_N).
\end{align*}
Therefore, by Theorem \ref{thm:weighted_approx} we have 
\begin{equation*}
	\lim_{N\to \infty} N^{\frac{2}{n-1}}\, \widetilde{\Delta}_q(K,\cC_N)   
	=\frac{|q|V_{|q|}(D_n)}{n|D_n|}  \lim_{N\to\infty}  N^{\frac{2}{n-1}}\, \Delta_n^{\psi_\varepsilon}(K,\cC_N)  
	= \frac{\gamma_{n-1}}{2} \frac{|q|V_{|q|}(D_n)}{n|D_n|} \, \Omega_q(K)^{\frac{n+1}{n-1}},
\end{equation*}
which concludes the proof for all $0<|q|\leq n$. The case $|q|>n$ follows analogously, and the case $q=0$ follows from  \eqref{eqn:dDelta0}.
\end{proof}

\subsection{The dual Wills functional and dual Wills deviation}\label{sec:dualWills}

For $K\in \cK(\R^n)$, the Wills functional $W(K)=\sum_{j=0}^n V_j(K)$  can be expressed as
\begin{equation}\label{willsexp}
	W(K) = \int_{\R^n} e^{-\pi \dist(x,K)^2} dx ,
\end{equation}
where $\dist(x,K) = \min_{y\in K} \|x-y\|$ (see, e.g., \cite{Vitale:1996}).
For $K\in \cK_0(\R^n)$ and $x\in \R^n$, the \emph{minimal radial distance} $\rdist(x,K)$ of $x$ to $K$ is defined
as the minimal distance between $x$ and a point $y\in K$ such that $x,y$ and the origin are collinear, or equivalently,
\begin{equation*}
	\rdist(x,K) := \begin{cases}
	              	\|x\|-\rho_K(x/\|x\|) & \text{if $x\not \in K$,}\\
	              	0 & \text{if $x\in K$.}
	              \end{cases}
\end{equation*}
Note that 
\begin{equation*}
	K\,\tilde{+}\, r D_n = \{x \in \R^n : \rdist(x,K)\leq r \}.
\end{equation*}
Hence, by the radial Steiner formula \eqref{eqn:radial_steiner} it follows that
\begin{equation*}
	\int_{\R^n} e^{-\pi \rdist(x,K)^2} dx = \int_{0}^\infty |K\,\tilde{+}\,t D_n| e^{-\pi t^2} dt = \sum_{j=0}^n \dV_j(K).
\end{equation*}
This motivates us to define the \emph{dual Wills functional} $\dW$ by
\begin{equation}
	\dW(K):= \int_{\R^n} e^{-\pi \rdist(x,K)^2} dx = \sum_{j=0}^n \dV_j(K), \quad  \text{for all $K\in\cK_0(\R^n)$.}
\end{equation}
For $K,L\in \cK_0(\R^n)$, we also define the \emph{dual Wills deviation} $\widetilde\Delta_\Sigma(K,L)$ by
\begin{equation}
	\widetilde\Delta_\Sigma(K,L) := \dW(K)+ \dW(L)-2 \dW(K\cap L) = \sum_{j=1}^n\widetilde{\Delta}_j(K,L).
\end{equation}

\paragraph{Proof of Theorem \ref{dualwillscor}.}
	Since
	\begin{equation*}
		\widetilde{\Delta}_{\Sigma}(K,\cC_N) = \min_{P\in \cC_N} \sum_{j=1}^n \widetilde{\Delta}_j(K,P) \geq \sum_{j=1}^n \widetilde{\Delta}_{j}(K,\cC_N),
	\end{equation*}
	the lower bound \eqref{eqn:dualWills_lower} follows from Theorem \ref{dualthm}.
	
	\medskip
	For the upper bound, we follow arguments similar to those in Subsection \ref{proofGilWill} using \eqref{eqn:random_weighted_approx_best}. For $j\in[n]$, we therefore define the continuous, positive and bounded weight function 
	\begin{equation*}
		\psi_{j}(x)
		:= \begin{cases} 
			\|x\|^{j-n} & \text{ if $\|x\|>\varepsilon$},\\
			\varepsilon^{j-n} &\text{ if $\|x\|\leq \varepsilon$},
		\end{cases}
	\end{equation*}
	where we assume that $\varepsilon>0$ is so small that $\varepsilon D_n$ is contained in the interior of $K$.
	Now by Lemma \ref{lem:dual_to_weighted} we have
	\begin{equation*}
		\widetilde{\Delta}_j(K,L) = \frac{jV_j(D_n)}{n|D_n|} \Delta_n^{\psi_j}(K,L)
	\end{equation*}
	for all $K,L\in \cK_0(\R^n)$ such that $\varepsilon D_n \subset K\cap L$.
	Then the density function corresponding to $\psi$ on $\partial K$ that minimizes the right-hand side of \eqref{eqn:random_weighted_approx} and \eqref{eqn:random_approx_circum_weighted} is given by
	\begin{equation*}
		\widetilde{\psi}_j(x) = \frac{1}{\Omega_j(K)} \|x\|^{\frac{(j-n)(n-1)}{n+1}} H_{n-1}(K,x)^{\frac{1}{n+1}}, \quad \forall x\in \partial K,
	\end{equation*}
	see \eqref{eqn:optimal_psi}.
	Thus,
	\begin{equation*}
		\lim_{N\to \infty} N^{\frac{2}{n-1}}\,\E \Delta_j(K,P_N^{\widetilde{\psi}_j}) 
		= \frac{jV_j(D_n)}{n|D_n|} \lim_{N\to \infty}N^{\frac{2}{n-1}}\, \E\Delta_n^{\psi_j}(K,P_N^{\widetilde{\psi}_j})
		= \frac{\beta(n,n)}{n|D_n|} jV_j(D_n) \Omega_j(K)^{\frac{n+1}{n-1}}
	\end{equation*}
	and 
	\begin{equation*}
		\lim_{N\to \infty} N^{\frac{2}{n-1}}\,\E \Delta_j(K,P_{(N)}^{\widetilde{\psi}_j}) 
		= \frac{jV_j(D_n)}{n|D_n|} \lim_{N\to \infty}N^{\frac{2}{n-1}}\, \E\Delta_n^{\psi_j}(K,P_{(N)}^{\widetilde{\psi}_j})
		= \frac{\beta(n,1)}{V_1(D_n)} jV_j(D_n) \Omega_j(K)^{\frac{n+1}{n-1}}.
	\end{equation*}
	Now we use a  random construction similar to the one in Subsection \ref{proofGilWill}. For each $j\in[n]$, choose $X_{1}^j,\ldots, X^j_{\lfloor N/n\rfloor}$ from $\partial K$ independently and according to the density $\widetilde{\psi}_j$, and set 
	\begin{equation*}
		P_{N} := \conv \left(\bigcup_{j=1}^n \left\{X_{1}^j,\dotsc, X^j_{\lfloor N/n\rfloor}\right\}\right). 
	\end{equation*}
	Then $P_{\lfloor N/n\rfloor}^j := \conv\left\{X^j_1,\dotsc,X^j_{\lfloor N/n\rfloor}\right\}\subset P_N$ for all $j\in [n]$, and therefore
	\begin{align*}
		\limsup_{N\to \infty} N^{\frac{2}{n-1}}\, \widetilde{\Delta}_{\Sigma}(K,P_{N})
		&\leq \sum_{j=1}^n \frac{jV_j(D_n)}{n|D_n|} \limsup_{N\to\infty} N^{\frac{2}{n-1}} \, \E \Delta_n^{\psi_j}\left(K,P_{\lfloor N/n \rfloor}^j \right) \\
		&\leq n^{\frac{2}{n-1}} \frac{\beta(n,n)}{n|D_n|} \sum_{j=1}^n jV_j(D_n) \Omega_j(K)^{\frac{n+1}{n-1}}.
	\end{align*}
	Similarly, we define $P_{(N)}$ by
	\begin{equation*}
		P_{(N)} := \bigcap_{j=1}^n \left( H_{j,1}^- \cap \dotsc \cap H_{j,\lfloor N/n\rfloor}^-\right),
	\end{equation*}
	where $X_{i}^j$ is distributed with respect to $\widetilde{\psi}_j$ for $j\in[n]$ and $1\leq i \leq \lfloor N/n\rfloor$, and
	$H_{j,i}^-$ is the supporting halfspace of $K$ at $X_{i}^j$. Notice that $P_{(\lfloor N/n\rfloor)}^j := H_{j,1}^-\cap \dotsc \cap H_{j,\lfloor N/n \rfloor}^- \supset P_{(N)}$, 
	and by Theorem \ref{thm:circumscribed} we conclude
	\begin{align*}
		\limsup_{N\to \infty} N^{\frac{2}{n-1}}\, \E\widetilde{\Delta}_{\Sigma}(K,P_{(N)} &\cap (K+D_n))
		\\
&\leq \sum_{j=1}^n \frac{jV_j(D_n)}{n|D_n|} \limsup_{N\to\infty} N^{\frac{2}{n-1}} \, \E \Delta_n^{\psi_j}\left(K,P_{(\lfloor N/n\rfloor)}^j\cap (K+D_n)\right) \\
		&\leq n^{\frac{2}{n-1}} \frac{\beta(n,1)}{V_1(D_n)} \sum_{j=1}^n jV_j(D_n) \Omega_j(K)^{\frac{n+1}{n-1}}.
	\end{align*}
Hence, there is a realization $Q_{N} \in \cP_N^i(K)$ of $P_{N}$ such that
	\begin{equation*}
		\limsup_{N\to \infty} N^{\frac{2}{n-1}}\, \widetilde{\Delta}_{\Sigma}(K,\cP_N^i(K))
		\leq \limsup_{N\to\infty} N^{\frac{2}{n-1}}\, \widetilde{\Delta}_{\Sigma}(K,Q_{N})
		\leq n^{\frac{2}{n-1}} \frac{\beta(n,n)}{n|D_n|} \sum_{j=1}^n jV_j(D_n) \Omega_j(K)^{\frac{n+1}{n-1}},
	\end{equation*}
	and similarly there is a realization $Q_{(N)}\in\cP_{(N)}^o(K)$ of $P_{(N)}$ such that $Q_{(N)}\subset K+D_n$ and
	\begin{equation*}
		\limsup_{N\to \infty} N^{\frac{2}{n-1}} \widetilde{\Delta}_{\Sigma}(K,\cP_{(N)}^o(K))
		\leq \limsup_{N\to\infty} N^{\frac{2}{n-1}} \widetilde{\Delta}_{\Sigma}(K,Q_{(N)})
		\leq n^{\frac{2}{n-1}} \frac{\beta(n,1)}{V_1(D_n)} \sum_{j=1}^n jV_j(D_n) \Omega_j(K)^{\frac{n+1}{n-1}}.
	\end{equation*}
	Therefore, the upper bound \eqref{eqn:dualWills_upper} holds true. \qed


\section{The (dual) stochastic Wills deviation}\label{sec:stochasticwills}

Vitale \cite{Vitale:1996} generalized the Wills functional (and, in a sense, the Steiner formula) using a probabilistic construction. This construction was also considered by Hadwiger \cite{Hadwiger:1975} without the probabilistic notation (see also \cite{Kampf:2009, McMullen:1991}). Consider a random variable $\Lambda$ on $[0,\infty)$ with $\E \Lambda^n <\infty$. Using $\Lambda$ as the radius $r$ in the Steiner formula \eqref{eqn:steiner} and taking the expectation, we obtain
\begin{equation}\label{stochasticwills}
	W_\Lambda(K):=\E |K + \Lambda D_n| =
	\int_{\R^n} \Pro(\dist(x,K)\leq \Lambda)\, dx 
	=\sum_{j=0}^n V_j(K)|D_{n-j}| \, \E \Lambda^{n-j}.
\end{equation}
For a nonnegative random variable $\Lambda$ with finite $n$th moment, we call $W_{\Lambda}$ the \emph{$\Lambda$-Wills functional}, or stochastic Wills functional, which is defined for any convex body $K\in\cK(\R^n)$. In particular, if $\Lambda$ is the constant $r>0$ then we  recover the Steiner formula \eqref{eqn:steiner}.
Furthermore, if $\Lambda=\Sigma$ where $\Sigma$ is a  Weibull random variable with parameters $(k,\lambda) = (2,\sqrt{1/\pi})$, i.e., $\Sigma$ has the density $2\pi t e^{-\pi t^2}$ for $t\geq 0$, then $\E \Sigma^k = |D_k|^{-1}$ and we recover the Wills functional of $K$:
\begin{equation*}
	W_\Sigma(K) = \E |K + \Sigma D_n| = \int_{\R^n} e^{-\pi \dist(x,K)^2} dx = \sum_{j=0}^n V_j(K) = W(K).
\end{equation*}
 Now, given $K,L\in\cK(\R^n)$ and a nonnegative random variable $\Lambda$ with $\E \Lambda^n <\infty$, we  define the \emph{$\Lambda$-Wills deviation} $\Delta_\Lambda(K,L)$ by
\begin{equation}
	\Delta_{\Lambda}(K,L):= W_\Lambda(K)+W_\Lambda(L)-2W_\Lambda(K\cap L) = \sum_{j=1}^n \Delta_{j}(K,L)|D_{n-j}|\,\E \Lambda^{n-j}.
\end{equation}
Note that the stochastic Wills functional is not a metric in general; a proof is given in Appendix \ref{sec:not_a_metric}.

\smallskip

We  derive an immediate corollary to Theorem \ref{willsthm}.
\begin{corollary}\label{genwillsthm}
	Set $\widehat W_\Lambda(D_n):=\sum_{j=1}^n j V_j(D_n)|D_{n-j}|\,\E \Lambda^{n-j}$. 
	Then with the same absolute constants $c_1,c_2,c_3,c_7>0$ from Theorem \ref{willsthm} and for all sufficiently large $N$, the following estimates hold true:
	\begin{align*}
		\begin{array}{lrclclr}
		i)  & c_1 \widehat W_\Lambda(D_n)  &\!\!\!\!\leq\!\!\!\!& N^{\frac{2}{n-1}}\, \Delta_\Lambda(D_n,\cC_N)       &\!\!\!\!\leq\!\!\!\!& c_2 \widehat W_\Lambda(D_n), 
			& \text{for $\cC_N\in\{\cP_N^{i},\cP_{(N)}^{o}\}$;}\\
		ii) & c_1 \widehat W_\Lambda(D_n)  &\!\!\!\!\leq\!\!\!\!& N^{\frac{2}{n-1}}\, \Delta_\Lambda(D_n,\cP_{k,N}^{i,s}), &&
			& \text{for $k\in\{0,1,\ldots,\lfloor n/2\rfloor\}$;}\\
		iii)& c_1 \widehat W_\Lambda(D_n)  &\!\!\!\!\leq\!\!\!\!& N^{\frac{2}{n-1}}\, \Delta_\Lambda(D_n,\cP_{k,N}^{o,s}), &&
			& \text{for $k\in\{\lfloor 3(n-1)/4\rfloor,\ldots,n-1\}$;}\\
		iv)&&                                   & N^{\frac{2}{n-1}}\, \Delta_\Lambda(D_n,\cP_{(N)}^i) &\!\!\!\!\leq\!\!\!\!& c_3 \widehat W_\Lambda(D_n);\\
		v) &&                                   & N^{\frac{2}{n-1}}\, \Delta_\Lambda(D_n,\cP_N)       &\!\!\!\!\leq\!\!\!\!& c_7\frac{\ln n}{n} \widehat W_\Lambda(D_n).
		\end{array}
	\end{align*}
	Furthermore, the bound in i) for $k=1$ and the bound in ii) for $k=n-2$ also hold true for nonsimplicial polytopes.
\end{corollary}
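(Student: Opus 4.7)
The key observation is that the $\Lambda$-Wills deviation is simply a nonnegatively weighted sum of the intrinsic volume deviations,
\[
    \Delta_\Lambda(K,L) = \sum_{j=1}^n w_j\,\Delta_j(K,L), \qquad w_j := |D_{n-j}|\,\E\Lambda^{n-j},
\]
and under the same weights we have $\widehat W_\Lambda(D_n) = \sum_{j=1}^n w_j\, j V_j(D_n)$. So the plan is to reduce everything to Theorem \ref{willsthm} (more precisely, to the constructions used in its proof) by the same linear combination argument that passes from Theorem \ref{thm:1} to Theorem \ref{willsthm}. The weights $w_j$ are nonnegative, and all intrinsic volume deviations are nonnegative, so the bookkeeping is clean.

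For the lower bounds in i), ii), iii), I first note that for any polytope $P\in\cC_N$ and hence for the minimizer,
\[
    \Delta_\Lambda(D_n,\cC_N) \;\geq\; \sum_{j=1}^n w_j\, \Delta_j(D_n,\cC_N).
\]
Multiplying by $N^{2/(n-1)}$ and taking $\liminf$, the lower bound in Theorem \ref{thm:1} i) (for $\cC_N\in\{\cP_N^i,\cP_{(N)}^o\}$) and its consequences in Corollary \ref{thm:cor1} ii), iii) (for $\cP_{k,N}^{i,s}$ and $\cP_{k,N}^{o,s}$) give $N^{2/(n-1)}\Delta_j(D_n,\cC_N)\geq c_1\, j V_j(D_n)$ for each $j$. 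Summing against the weights $w_j$ yields $N^{2/(n-1)}\Delta_\Lambda(D_n,\cC_N)\geq c_1 \widehat W_\Lambda(D_n)$.

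For the upper bounds, I would not pick a different polytope for each $j$; the point is that each proof of parts i), iv), v) of Theorem \ref{willsthm} already produces a \emph{single} polytope $P^b_N\in\cC_N$ which simultaneously satisfies $\Delta_j(D_n,P^b_N)\leq c\, j V_j(D_n) N^{-2/(n-1)}$ for all $j\in[n]$, with $c\in\{c_2,c_3,c_7 \ln n/n\}$ depending on the case. In i), this is the polytope approximating the volume well and then combined with the extended isoperimetric inequality \eqref{eqn:inscribed_ineq}/\eqref{eqn:circum_ineq}; in iv), the polytope from \cite{BronshteinIvanov:1975} combined again with \eqref{eqn:inscribed_ineq}; and in v), the rescaled random polytope $r_1 P_N$ constructed in the proof of Theorem \ref{willsthm} v), whose simultaneous estimates $\Delta_j(D_n,r_1 P_N)\leq C_2\frac{j\ln n}{n}V_j(D_n) N^{-2/(n-1)}$ hold with positive probability. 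Multiplying these simultaneous pointwise bounds by $w_j$ and summing gives exactly the required bound $\Delta_\Lambda(D_n,P^b_N)\leq c\,\widehat W_\Lambda(D_n) N^{-2/(n-1)}$.

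Since no new construction is needed and the weights $w_j = |D_{n-j}|\E\Lambda^{n-j}$ enter only linearly, there is no genuine obstacle; the only thing worth emphasizing is that the constants $c_1,c_2,c_3,c_7$ remain the absolute constants of Theorem \ref{willsthm}, because the weighted sum argument does not introduce additional dimensional factors. The final claim about nonsimplicial polytopes in ii) for $k=1$ and iii) for $k=n-2$ follows as in Theorem \ref{willsthm}, via the handshaking inequalities \eqref{handshaking} which do not require the simpliciality assumption.
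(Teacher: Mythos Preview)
Your proposal is correct and takes essentially the same approach as the paper: the paper's proof also exploits that $\Delta_\Lambda$ is a nonnegative linear combination of the $\Delta_j$, uses the inequality $\Delta_\Lambda(D_n,\cC_N)\geq\sum_j w_j\,\Delta_j(D_n,\cC_N)$ together with Theorem~\ref{thm:1} and Corollary~\ref{thm:cor1} for the lower bounds, and invokes the single simultaneous polytope $Q$ from the proofs of Theorem~\ref{willsthm} i), iv), v) for the upper bounds. Your reading of the ``nonsimplicial'' clause as pertaining to parts ii) and iii) (rather than i) and ii) as literally printed) is also the intended one, matching Theorem~\ref{willsthm}.
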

\begin{proof}
In the proof of Theorem \ref{willsthm} i), it was shown that for $\cC_N\in\{\cP_N^i, \cP_{(N)}^o\}$ there exists a polytope $Q\in\cC_N$ which satisfies all of the upper bounds in Theorem \ref{thm:1} i) simultaneously. Thus, for all sufficiently large $N$,
\begin{equation*}
	\Delta_\Lambda(D_n,\cC_N) 
	\leq \Delta_\Lambda(D_n,Q) 
	= \sum_{j=1}^n\Delta_{j}(D_n,Q)|D_{n-j}|\E \Lambda^{n-j}
	\leq c_2 \widehat W_\Lambda(D_n) N^{-\frac{2}{n-1}},
\end{equation*}
so the upper bound in i) holds. Part iv) follows in the same way, but we use Theorem \ref{willsthm} iv) instead. The lower bounds in i), ii) and iii) follow from the lower bounds in Theorem \ref{thm:1} i), ii) and iii), respectively, together with the estimate
\begin{equation*}
	\Delta_\Lambda(D_n,\cC_N) 
	\geq \sum_{j=1}^n \Delta_{j}(D_n,\cC_N)|D_{n-j}| \E\Lambda^{n-j} 
	\geq c_1\,\widehat W_\Lambda(D_n) N^{-\frac{2}{n-1}}.
\end{equation*}
Here the last inequality holds when $N$ is large enough, and $c_1>0$ is the absolute constant from Theorem \ref{thm:1}.  
Finally, v) follows along the same lines as the proof of Theorem \ref{willsthm} v); we leave the details to the interested reader.
\end{proof}

\begin{remark}
Similar to Remark \ref{remarkWills}, we notice that
\begin{equation}
	\widehat{W}_{\Lambda}(D_n) = n|D_n| \sum_{k=0}^{n-1} \binom{n-1}{k} \E\Lambda^k = V_1(D_n) W_{\Lambda}(D_{n-1}).
\end{equation}
\end{remark}

\begin{remark}
Theorem \ref{gilwillthm} can be generalized to give an upper bound for the $\Lambda$-Wills deviation of a convex body with a rolling ball and an inscribed polytope with a prescribed number of vertices. We leave the details to the interested reader.
\end{remark}

\subsection{The dual stochastic Wills deviation}

For $K\in\mathcal K_0(\R^n)$, we define the \emph{dual $\Lambda$-Wills functional} $\dW_\Lambda(K)$ by 
\begin{equation*}
	\dW_\Lambda(K) := \E |K\, \tilde{+}\, \Lambda D_n| = \int_{\R^n} \Pro(\rdist(x,L)\leq \Lambda)\, dx = \sum_{j=0}^n \dV_{j}(K) |D_{n-j}|\, \E \Lambda^{n-j},
\end{equation*}
and if $L\in\mathcal K_0(\R^n)$ as well, we define the \emph{dual $\Lambda$-Wills deviation} $\widetilde\Delta_{\Lambda}(K,L)$ by
\begin{equation*}
	\widetilde\Delta_{\Lambda}(K,L):=\widetilde W_\Lambda(K)+\widetilde W_\Lambda(L)-2\widetilde W_\Lambda(K\cap L) 
	= \sum_{j=1}^n \widetilde\Delta_{j}(K,L)|D_{n-j}|\,\E \Lambda^{n-j}.
\end{equation*}
As a corollary to Theorem \ref{dualwillscor}, we obtain the following result.
\begin{corollary}\label{dualwillscor_stochastic}
	Let $K$ be a convex body of class $C^2$ that contains the origin in its interior. Furthermore, let $\cC_N$ be either $\cP_N^i$, $\cP_N^i(K)$, $\cP_{(N)}^o$, or $\cP_{(N)}$, and set $\gamma_{n-1}$ as in \eqref{eqn:c_del_div}. Then
	\begin{equation}\label{eqn:dualWills_lower_stochastic}
		\liminf_{N\to \infty} N^{\frac{2}{n-1}} \, \widetilde{\Delta}_{\Lambda}(K,\cC_N) 
			\geq \frac{\gamma_{n-1}}{2} \sum_{j=1}^n \binom{n-1}{j-1} \E\Lambda^{n-j} \, \Omega_j(K)^{\frac{n+1}{n-1}}.
	\end{equation}
	Moreover, if $K$ is a convex body that admits a rolling ball from the inside and contains the origin in its interior, then
	\begin{equation} \label{eqn:dualWills_upper_stochastic}
		\limsup_{N\to \infty} N^{\frac{2}{n-1}} \, \widetilde{\Delta}_{\Lambda}(K,\cP_N^i(K)) 
			\leq n^{\frac{2}{n-1}}\beta(n,n) \sum_{j=1}^n \binom{n-1}{j-1} \E\Lambda^{n-j} \, \Omega_j(K)^{\frac{n+1}{n-1}},
	\end{equation}
	and if $K$ is a convex body of class $C^2$, then
	\begin{equation}
		\limsup_{N\to \infty} N^{\frac{2}{n-1}} \, \widetilde{\Delta}_{\Lambda}(K,\cP_{(N)}^o(K)) 
			\leq n^{\frac{2}{n-1}}\beta(n,1) |D_{n-1}| \sum_{i=1}^n \binom{n-1}{j-1} \E\Lambda^{n-j} \, \Omega_j(K)^{\frac{n+1}{n-1}}.
	\end{equation}
	Furthermore, these upper bounds also hold true for $\cP_N$ and $\cP_{(N)}$ since $\cP_N \subset \cP_N^i(K)$ and $\cP_{(N)} \subset \cP_{(N)}^o(K)$.
\end{corollary}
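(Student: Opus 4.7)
The plan is to deduce Corollary \ref{dualwillscor_stochastic} from Theorem \ref{dualthm} together with the random constructions developed in the proof of Theorem \ref{dualwillscor}, in complete analogy with the way Corollary \ref{genwillsthm} is obtained from Theorem \ref{thm:1} and Theorem \ref{willsthm}. The main bookkeeping identity we will use repeatedly is
$$\frac{jV_j(D_n)}{n|D_n|}\,|D_{n-j}| = \binom{n-1}{j-1},$$
which follows from $V_j(D_n) = \binom{n}{j}|D_n|/|D_{n-j}|$ and $j\binom{n}{j} = n\binom{n-1}{j-1}$; it converts the ``$jV_j(D_n)/(n|D_n|)$''-normalisation appearing in Theorem \ref{dualthm} into the binomial coefficient appearing in the statement of the corollary.

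For the lower bound \eqref{eqn:dualWills_lower_stochastic} we start from
$$\widetilde{\Delta}_\Lambda(K,\cC_N) = \min_{P\in\cC_N}\sum_{j=1}^{n}\widetilde{\Delta}_j(K,P)\,|D_{n-j}|\,\E\Lambda^{n-j} \;\geq\; \sum_{j=1}^{n}\widetilde{\Delta}_j(K,\cC_N)\,|D_{n-j}|\,\E\Lambda^{n-j},$$
multiply by $N^{2/(n-1)}$, take $\liminf_{N\to\infty}$, apply Theorem \ref{dualthm} term by term (valid for each of the four admissible classes, with the appropriate $\gamma_{n-1}$), and invoke the displayed identity.

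For the upper bounds \eqref{eqn:dualWills_upper_stochastic} and its circumscribed counterpart, the idea is to reuse the very same random polytopes already constructed in the proof of Theorem \ref{dualwillscor}: in the inscribed case, $P_N:=\conv\bigcup_{j=1}^{n}\{X_1^{j},\dotsc,X_{\lfloor N/n\rfloor}^{j}\}$, where the $j$-th batch is drawn independently from $\partial K$ according to the optimal density $\widetilde{\psi}_j$; and $P_{(N)}$ is the analogous intersection of supporting halfspaces in the circumscribed case. The crucial observation is that the single-batch polytope $P^j_{\lfloor N/n\rfloor}$ is contained in $P_N$ (respectively, $P^j_{(\lfloor N/n\rfloor)}\supset P_{(N)}$), so by monotonicity $\widetilde{\Delta}_j(K,P_N) \leq \widetilde{\Delta}_j(K,P^j_{\lfloor N/n\rfloor})$ for every $j$ simultaneously. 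Combining this with the single-density asymptotics
$$\lim_{M\to\infty} M^{2/(n-1)}\,\E\widetilde{\Delta}_j(K,P^{\widetilde{\psi}_j}_M) = \frac{jV_j(D_n)}{n|D_n|}\,\beta(n,n)\,\Omega_j(K)^{(n+1)/(n-1)}$$
already derived in the proof of Theorem \ref{dualwillscor}, then multiplying by $|D_{n-j}|\,\E\Lambda^{n-j}$, summing over $j$ and applying the binomial identity yields the desired upper bound in expectation; extracting a deterministic realisation $Q_N \in \cP_N^i(K)$ produces \eqref{eqn:dualWills_upper_stochastic}. The circumscribed version differs only in that Theorem \ref{thm:circumscribed} contributes the prefactor $\beta(n,1)|D_{n-1}|$ in place of $\beta(n,n)/(n|D_n|)$, and one restricts the random circumscribed polytope to $P_{(N)}\cap(K+D_n)$ exactly as in the proof of Theorem \ref{dualwillscor}. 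The final assertion that the upper bounds transfer to $\cP_N$ and $\cP_{(N)}$ follows at once from the inclusions $\cP_N^i(K)\subset\cP_N$ and $\cP_{(N)}^o(K)\subset\cP_{(N)}$, which make $\widetilde{\Delta}_\Lambda$ only smaller on the larger class.

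There is no genuine obstacle: the deep analytic input (the asymptotic formula for each $\widetilde{\Delta}_j$ and the simultaneous random construction achieving it across all $j$) is already in place, and only the elementary algebra that rewrites $jV_j(D_n)|D_{n-j}|/(n|D_n|)$ as $\binom{n-1}{j-1}$ is needed to absorb the new weighting $|D_{n-j}|\E\Lambda^{n-j}$ into the stated constants.
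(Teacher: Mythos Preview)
Your proposal is correct and follows essentially the same route as the paper: the paper's proof simply says the corollary follows ``analogously to Theorem \ref{dualwillscor}'' together with the identity $\frac{1}{n|D_n|}\sum_{j=1}^n \E\Lambda^{n-j}|D_{n-j}|jV_j(D_n)\,\Omega_j(K)^{\frac{n+1}{n-1}} = \sum_{j=1}^n \binom{n-1}{j-1}\E\Lambda^{n-j}\,\Omega_j(K)^{\frac{n+1}{n-1}}$, which is exactly the bookkeeping identity you isolate. Your description of the lower bound via termwise minimisation and of the upper bound via the same mixed-density random polytope $P_N$ (respectively $P_{(N)}\cap(K+D_n)$) from the proof of Theorem \ref{dualwillscor} matches the intended argument precisely.
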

\begin{proof}
	The corollary follows  analogously to Theorem \ref{dualwillscor}, where in the end we just notice that
	\begin{equation*}
		\frac{1}{n|D_n|} \sum_{j=1}^n \E\Lambda^{n-j} |D_{n-j}| j V_j(D_n) \, \Omega_j(K)^{\frac{n+1}{n-1}} 
		= \sum_{j=1}^n \binom{n-1}{j-1} \E\Lambda^{n-j} \, \Omega_j(K)^{\frac{n+1}{n-1}}.\qedhere
	\end{equation*}
\end{proof}

          
\section*{Acknowledgments}
The authors would like to express their sincerest gratitude to Ferenc Fodor, Daniel Hug,
Monika Ludwig, Matthias Reitzner, Christoph Th\"ale, Viktor V\'igh and Elisabeth Werner for the
enlightening discussions. The authors would also like to thank the anonymous referees for their
valuable comments that helped improve the article. The work of the third author is supported by the
Center for Brains, Minds and Machines (CBMM), funded by NSF STC award CCF-1231216.

\appendix

\section{\texorpdfstring{The intrinsic volume deviation and the stochastic Wills functional are not metrics}{The intrinsic volume deviation and the (stochastic) Wills functional are not metrics}}\label{sec:not_a_metric}

It is well-known that the symmetric volume difference $\Delta_n$ is a metric on $\cK(\R^n)$ and induces the same topology as the Hausdorff metric on the space of all convex bodies that have nonempty interiors (see, e.g., \cite{ShephardWebster:1965}). On the other hand, the surface area deviation $\Delta_{n-1}$ is not a metric since it does not satisfy the triangle inequality. An explicit counterexample was given in \cite{Hoehner:2016}. The same triple of convex bodies can be used to show that for any $j\in[n-1]$, the $j$th intrinsic volume deviation fails to satisfy the triangle inequality. 

\begin{lemma}\label{Deltajnotmetric}
	Fix $n\in\N$ with $n\geq 2$. For $j\in[n-1]$, the intrinsic volume deviation $\Delta_j$ does not satisfy the triangle inequality and is thus not a metric on $\cK(\R^n)$. 
	Moreover,  the stochastic Wills deviation $\Delta_\Lambda$ does not satisfy the triangle inequality and is thus not a metric on $\cK(\R^n)$. 
\end{lemma}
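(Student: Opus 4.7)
Our plan is to exhibit, for each $j\in[n-1]$, an explicit triple of convex bodies $K,L,M\subset\R^n$ violating the triangle inequality for $\Delta_j$, and then to leverage the $j=1$ case to handle $\Delta_\Lambda$. The valuation identities $\Delta_j(K,M)=V_j(K\cup M)-V_j(K\cap M)$ and $\Delta_j(K,L)=V_j(L)-V_j(K)$ (valid when $K\subset L$) together reduce the triangle inequality $\Delta_j(K,M)\le\Delta_j(K,L)+\Delta_j(L,M)$, in the regime $K,M\subset L$, to the condition $V_j(L)\ge V_j(K\cup M)$. This can fail whenever the convex hull of a nonconvex union $K\cup M$ has strictly smaller $j$th intrinsic volume than the value the valuation extension assigns to $K\cup M$.

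The building block is a 2D L-shape: put $K_0:=[0,1]\times[0,\epsilon]$, $M_0:=[0,\epsilon]\times[0,1]$, and $L_0:=\conv(K_0\cup M_0)$, the pentagon with vertices $(0,0),(1,0),(1,\epsilon),(\epsilon,1),(0,1)$. Computing the intrinsic volumes of rectangles via the elementary symmetric polynomials of edge lengths, and $V_1(L_0),V_2(L_0)$ by direct perimeter and area calculation (Steiner's formula for polygons), one obtains as $\epsilon\to 0$
\begin{equation*}
a_1 := \Delta_1(K_0,M_0) - \Delta_1(K_0,L_0) - \Delta_1(L_0,M_0) \to 2-\sqrt 2>0,
\end{equation*}
\begin{equation*}
a_2 := \Delta_2(K_0,M_0) - \Delta_2(K_0,L_0) - \Delta_2(L_0,M_0) \to -1<0,
\end{equation*}
establishing the $n=2$, $j=1$ case immediately.

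For $n\ge 3$ and $j\in[n-1]$, set $K:=K_0\times T$, $M:=M_0\times T$, $L:=L_0\times T$ for a convex body $T\subset\R^{n-2}$ to be chosen depending on $j$. Since $V_i$ vanishes for $i\ge 3$ on 2D bodies and $\Delta_0\equiv 0$ on convex bodies with nonempty intersection, the product formula $V_j(A\times B)=\sum_{i+k=j}V_i(A)V_k(B)$ yields
\begin{equation*}
\Delta_j(K,M)-\Delta_j(K,L)-\Delta_j(L,M) = a_1 V_{j-1}(T) + a_2 V_{j-2}(T).
\end{equation*}
For $j=1$ this equals $a_1>0$, regardless of $T$. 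For $j\ge 2$ choose $T=\rho D_{n-2}$; since $V_{j-1}(T)/V_{j-2}(T)=\rho\,V_{j-1}(D_{n-2})/V_{j-2}(D_{n-2})$ is linear in $\rho$, taking $\rho$ large enough makes the right-hand side positive, yielding the desired triangle violation.

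For the stochastic Wills deviation $\Delta_\Lambda=\sum_{j=1}^n|D_{n-j}|\,\E\Lambda^{n-j}\,\Delta_j$, apply the $j=1$ triple above (with $T:=D_{n-2}$, say) and let $\Lambda=r$ be the deterministic constant $r>0$. The triangle gap for $\Delta_\Lambda$ is then a polynomial in $r$ of degree $n-1$ whose leading coefficient $a_1|D_{n-1}|>0$ comes from the $j=1$ term; hence the gap is strictly positive for all $r$ sufficiently large, and $\Delta_\Lambda$ fails the triangle inequality. The only genuinely tedious step in carrying out this plan is the explicit 2D intrinsic volume calculation for the pentagon $L_0$ needed to pin down the signs of $a_1$ and $a_2$; everything else is a direct consequence of the product formula and the valuation identity.
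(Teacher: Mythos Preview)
Your argument for the first claim (that $\Delta_j$ fails the triangle inequality for $j\in[n-1]$) is correct and genuinely different from the paper's. The paper uses two opposite spherical caps $L_{\pm\varepsilon}$ of $D_n$ and reduces the triangle violation to the valuation identity $2V_j(L_0)=V_j(D_n)+V_j(D_{n-1})>V_j(D_n)$ for $j\le n-1$; this handles all $j$ and all $n$ with a single triple of bodies and no computation beyond continuity. Your L-shape plus product construction is more hands-on and needs a separate choice of $T=\rho D_{n-2}$ (with $\rho$ depending on $j$), but it is elementary and transparent: the triangle gap factors through the 2D computation via the product formula, and positivity is immediate from the degree count in $\rho$. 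Both approaches exploit the same phenomenon---that the valuation extension assigns $K\cup M$ more $V_j$ than its convex hull---but the paper's cap example does it uniformly in $j$.

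Your treatment of $\Delta_\Lambda$, however, does not prove the stated lemma. The claim is that for a \emph{given} nonnegative random variable $\Lambda$ (with $\E\Lambda^n<\infty$ and $\Pro(\Lambda>0)>0$), the deviation $\Delta_\Lambda$ fails the triangle inequality; the paper's proof establishes exactly this, by showing $W_\Lambda(D_n)<2W_\Lambda(L_\varepsilon)$ for the same caps and any such $\Lambda$. You instead \emph{choose} $\Lambda=r$ to be a large constant, which only shows that one particular $\Delta_\Lambda$ is not a metric. For a general $\Lambda$ your computation gives a triangle gap $\sum_{j=1}^n |D_{n-j}|\,\E\Lambda^{n-j}\,(a_1 V_{j-1}(T)+a_2 V_{j-2}(T))$, and with $T=D_{n-2}$ the top-order term (in $j=n$) is $a_2|D_{n-2}|<0$, so there is no reason this sum is positive. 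The fix is easy within your framework: keep $\Lambda$ arbitrary and instead scale the triple, replacing $(K,L,M)$ by $(sK,sL,sM)$. The gap becomes $\sum_{j}|D_{n-j}|\,\E\Lambda^{n-j}\,s^j(\text{gap}_j)$, a polynomial in $s$ whose lowest-order term is $|D_{n-1}|\,\E\Lambda^{n-1}\,a_1\,s>0$; taking $s$ small then yields the violation for every $\Lambda$ with $\E\Lambda^{n-1}>0$.
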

\begin{proof}
	Let $\varepsilon \in (0,1)$. We consider the disjoint caps $L_{\pm \varepsilon}$ of the ball $D_n$ defined by
	\begin{equation*}
		L_{\varepsilon} := \{(x_1,\dotsc,x_n) \in D_n : x_n \geq \varepsilon\}, \quad L_{-\varepsilon} := -L_{\varepsilon}.
	\end{equation*}
	Since $L_{\pm \varepsilon} \subset D_n$ and $V_j(L_{\varepsilon})=V_j(L_{-\varepsilon})$, we have
	\begin{equation*}
		\Delta_j(D_n,L_{\varepsilon}) = V_j(D_n) - V_j(L_{\varepsilon}) = \Delta_j(D_n,L_{-\varepsilon}).
	\end{equation*}
	Since $L_{\varepsilon} \cap L_{-\varepsilon} = \varnothing$, we also have
		$\Delta_j(L_\varepsilon,L_{-\varepsilon}) = 2V_j(L_\varepsilon)$.
	We want to show that
		$\Delta_j(D_n,L_{\varepsilon}) + \Delta_j(D_n,L_{-\varepsilon}) < \Delta_j(L_{\varepsilon},L_{-\varepsilon})$,
	which is equivalent to
	\begin{equation}\label{notametric}
		V_j(D_n) < 2 V_j(L_{\varepsilon}) = V_j(L_{\varepsilon}) + V_j(L_{-\varepsilon}).
	\end{equation}
	Since the caps $L_{\pm \varepsilon}$ converge to  half-balls as $\varepsilon\to 0^+$, by the continuity of the intrinsic volume $V_j$ and its valuation property we obtain
	\begin{equation*}
		\lim_{\varepsilon\to 0^+} \big(V_j(L_{\varepsilon}) + V_j(L_{-\varepsilon})\big) = V_j(D_n) + V_j(D_{n-1}) > V_j(D_n), \quad \forall j\in[n-1].
	\end{equation*}
	Thus, since $V_j(L_{\varepsilon})$ increases monotonically as $\varepsilon\to 0^+$, there exists $\varepsilon_0>0$ such that  \eqref{notametric} holds for all $\varepsilon\in (0,\varepsilon_0)$. 
	We have thus verified that $\Delta_j$ is not a metric.
	
	\smallskip
	Similarly, for the stochastic Wills deviation we want to show that
	$\Delta_\Lambda(D_n,L_\varepsilon) + \Delta_\Lambda(D_n,L_{-\varepsilon}) < \Delta_\Lambda(L_\varepsilon,L_{-\varepsilon})$,
	which holds if and only if
	\begin{equation}\label{notametricwills}
		W_{\Lambda}(D_n) < 2 W_{\Lambda}(L_\varepsilon).
	\end{equation}
	By \eqref{notametric},  for all $\varepsilon \in (0,\varepsilon_0)$ it holds that
	\begin{equation*}
		\sum_{j=1}^{n-1}V_j(L_\varepsilon)|D_{n-j}| \E \Lambda^{n-j} > \frac{1}{2}\sum_{j=1}^{n-1} V_j(D_n) |D_{n-j}| \E \Lambda^{n-j}.
	\end{equation*}
	Therefore,
	\begin{align}\label{willsnotmetric}
	\begin{split}
		W_{\Lambda}(L_\varepsilon) 
		&= |D_n|\E\Lambda^n + \sum_{j=1}^{n-1}V_j(L_\varepsilon)|D_{n-j}| \E\Lambda^{n-j} + |L_\varepsilon|\\
		&> |D_n|\E\Lambda^n + \frac{1}{2} \sum_{j=1}^{n-1} V_j(D_n)|D_{n-j}| \E\Lambda^{n-j} + |L_\varepsilon|\\
		&=\frac{1}{2}W_{\Lambda}(D_n)+ \frac{1}{2}|D_n|\E\Lambda^n + |L_\varepsilon|-\frac{1}{2}|D_n|.
	\end{split}
	\end{align}
	Since $\Lambda$ is a positive random variable, we have $\E \Lambda^n  >0$.
	Furthermore, since $|L_{\varepsilon}|$ increases to $|L_{0}| = \frac{1}{2}|D_n|$ as $\varepsilon \to 0^+$, there exists $\varepsilon_1 \in (0,\varepsilon_0)$ such that 
	\begin{equation*}
		|L_{\varepsilon_1}| > \frac{1}{2}|D_n| (1-\E\Lambda^n),
	\end{equation*}
	or equivalently, $\frac{1}{2}|D_n|\E\Lambda^n + |L_{\varepsilon_1}|-\frac{1}{2}|D_n|> 0$. Thus, from  \eqref{willsnotmetric} we obtain 
		$W_\Lambda(L_{\varepsilon_1}) > \frac{1}{2}W_\Lambda(D_n)$,
	which  proves \eqref{notametricwills}.
\end{proof}
In this proof we essentially used the discontinuity of the intrinsic volume deviation and of the stochastic Wills functional on $\cK(\R^n)$.
Notice that if $V$ is a continuous valuation on $\cK(\R^n)$, then the deviation functional $\Delta(K,L):=V(K)+V(L)-2V(K\cap L)$ is continuous on convex bodies that have an interior point in common. However, $\Delta$ may not be continuous in general; continuity  fails if there are two convergent sequences of convex bodies $K_i \to K$ and $L_i\to L$ such that $K_i\cap L_i=\varnothing$, $K\cap L\neq \varnothing$ and $V(K\cap L)>0$.

\section{Asymptotic estimates}\label{sec:asymptotic_estimates}

Recall that  $|D_n| = \pi^{\frac{n}{2}}/ \Gamma(\frac{n}{2}+1)$ and $|D_n| = \frac{1}{n} |\partial D_n| = \frac{2\pi}{n} |D_{n-2}|$.
By Stirling's inequality,
\begin{equation}\label{eqn:gamma_general}
	\sqrt{2\pi x} \left(\frac{x}{e}\right)^{x} 
	\leq \Gamma(x+1) 
	\leq \sqrt{2\pi x} \left(\frac{x}{e}\right)^{x} e^{\frac{1}{12x}}
	\leq \sqrt{2\pi x} \left(\frac{x}{e}\right)^{x} \left(1+\frac{1}{x}\right),
	\quad \forall x\geq 1.
\end{equation}
This implies
\begin{equation}\label{eqn:vol_Dn}
	\frac{1}{\sqrt{\pi n}} \left(\frac{2\pi e}{n}\right)^{\frac{n}{2}} \left(1-\frac{1}{n}\right)
	\leq |D_n| 
	\leq \frac{1}{\sqrt{\pi n}} \left(\frac{2\pi e}{n}\right)^{\frac{n}{2}}, \quad \forall n\geq 1
\end{equation}
and
\begin{equation}\label{eqn:vol_Dn_and_partial_Dn}
	\sqrt{2\pi n} \left(1-\frac{1}{n}\right) \leq V_1(D_n)= \frac{n|D_n|}{|D_{n-1}|} \leq \sqrt{2\pi n}, \qquad \forall n\geq 2.
\end{equation}
Therefore, we also obtain
\begin{equation}\label{eqn:partial_Dn}
	\frac{2\pi e}{n}
	\leq |\partial D_n|^{\frac{2}{n-1}} 
	\leq \frac{2\pi e}{n} (2e)^{\frac{1}{n-1}} 
	\leq \frac{2\pi e}{n} \left(1+\frac{8}{n}\right), \qquad \forall n\geq 2,
\end{equation}
which proves \eqref{eqn:asymptotic_Dn}.
We use the local approximation $1-\frac{x}{2} \leq \Gamma(1+x) \leq 1+ x$ for $x\in[0,2]$, which yields
\begin{equation}\label{eqn:gamma_small}
	1-\frac{2}{n} \leq \Gamma\left(1+\frac{2}{n-1}\right) \leq 1+\frac{2}{n}, \qquad \forall n\geq 2.
\end{equation}

\paragraph{Estimates for \eqref{eqn:random_constants} and \eqref{eqn:beta}.}

We first estimate the constant in \eqref{eqn:random_constants}, which is defined by
\begin{equation*}
	\alpha(n,j) = \left(1-\frac{2}{n+1}\right) \left(\frac{n|D_n|}{|D_{n-1}|}\right)^{\frac{2}{n-1}} \frac{\Gamma\left(j+1+\frac{2}{n-1}\right)}{\Gamma(j+1)}.
\end{equation*}
By the formula $\Gamma(z+1) = z\Gamma(z)$ for $z>0$, we obtain
\begin{equation}\label{eqn:increasing_est}
	\frac{\Gamma\left(j+1 + \frac{2}{n-1}\right) }{\Gamma(j+1)}
	= \Gamma\left(1+\frac{2}{n-1}\right)\prod_{k=1}^j \left(1+\frac{2}{k(n-1)}\right)
	\leq \left(1+\frac{2}{n}\right) \exp\left( \frac{2}{n-1} \sum_{k=1}^j \frac{1}{k}\right).
\end{equation}
This implies
\begin{equation}\label{eqn:j_n_estimate}
	1 \leq \frac{\Gamma\left(j+1+\frac{2}{n-1}\right)}{\Gamma(j+1)} 
	\leq 1+ 25 \frac{\ln(j+1)}{n}, \qquad \forall  n\geq 2, \, \forall j\in[n] 
\end{equation}
and 
\begin{equation*}
	\frac{\Gamma\left(n+1+\frac{2}{n-1}\right)}{\Gamma(n+1)} 
	\leq 1+ 4 \frac{\ln n}{n}, \qquad \forall n\geq 10.
\end{equation*}
Using this estimate as well as \eqref{eqn:gamma_general}, \eqref{eqn:partial_Dn} and \eqref{eqn:gamma_small}, we conclude
\begin{equation}\label{eqn:alpha_estimate}
	1+\frac{\ln n}{n} - \frac{2}{n}\leq \alpha(n,j) \leq 1+ 120 \frac{\ln n}{n}, \qquad \forall  n\geq 2, \, \forall j\in[n], 
\end{equation}
and $\alpha(n,j) \geq 1$ for all $n\geq 4$ and $j\in[n]$.
Furthermore, by \eqref{eqn:gamma_general} we conclude that for all $n\geq 2$ and $j\in[n-1]$,
\begin{equation*}
	\frac{\alpha(n,n)}{\alpha(n,j)} 
	= \frac{\Gamma(n+1+\frac{2}{n-1})}{\Gamma(j+1+\frac{2}{n-1})} \frac{\Gamma(j+1)}{\Gamma(n+1)} 
	\leq 1+ \frac{1}{j}
\end{equation*}
and
\begin{equation*}
	\frac{\alpha(n,n)}{\alpha(n,j)} = \prod_{k=j+1}^n \left(1+\frac{2}{k(n-1)}\right)\geq 1+\frac{2}{n^2}.
\end{equation*}
Moreover, we obtain the estimate
\begin{equation*}
	\frac{\alpha(n,n)}{\alpha(n,j)} \leq \frac{\alpha(n,n)}{\alpha(n,1)} \leq 1+ 3 \frac{\ln n}{n}, \qquad \forall n\geq 2, \; \forall j\in[n-1],
\end{equation*}
which yields
\begin{equation}\label{eqn:alpha_rel_est}
	1+\frac{2}{n^2} \leq \frac{\alpha(n,n)}{\alpha(n,j)} \leq 1 + \min \left\{ \frac{1}{j} , 3 \frac{\ln n}{n} \right\}, \qquad \forall n\geq 2, \; \forall j\in[n-1].
\end{equation}
For \eqref{eqn:beta}, we  write
	$\beta(n,j) = \alpha(n,j) \frac{j V_j(D_n)}{2n|D_n|} |\partial D_n|^{-\frac{2}{n-1}}$
and use \eqref{eqn:partial_Dn} and \eqref{eqn:alpha_estimate} to derive the bounds
\begin{equation}\label{eqn:beta_estimate}
	\frac{j V_j(D_n)}{4\pi e|D_n|} \left(1+14 \frac{\ln n}{n}\right) \leq \beta(n,j)\leq \frac{j V_j(D_n)}{4\pi e|D_n|} \left(1+120\frac{\ln n}{n}\right),
	\qquad \forall n\geq 2, \; \forall j\in[n].
\end{equation}

\paragraph{Estimates for $\del_{n-1}$ which yield \eqref{eqn:random_approx_dual} and \eqref{eqn:asymtotics_random_weighted}.}

In \cite[Thm.\ 2]{MankiewiczSchutt:2001} it was shown that
\begin{equation*}
	\frac{n-1}{n+1} |D_{n-1}|^{-\frac{2}{n-1}} 
	\leq \del_{n-1} 
	\leq \frac{n-1}{n+1} |D_{n-1}|^{-\frac{2}{n-1}} \frac{\Gamma\left(n+1+\frac{2}{n-1}\right)}{\Gamma(n+1)}, \qquad \forall n\geq 2.
\end{equation*}
Using \eqref{eqn:vol_Dn_and_partial_Dn}, \eqref{eqn:partial_Dn} and \eqref{eqn:j_n_estimate}, this yields 
\begin{equation}\label{eqn:del_estim}
	\frac{n}{2\pi e} \left(1+\frac{\ln n}{n} - \frac{2}{n}\right)
	\leq \del_{n-1} 
	\leq \frac{n}{2\pi e} \left(1+25 \frac{\ln n}{n}\right), \qquad \forall n\geq 2
\end{equation}
and
\begin{equation*}
	\frac{n}{2\pi e} \left(1+ \frac{1}{8} \frac{\ln n}{n}\right) \leq \del_{n-1} \leq \frac{n}{2\pi e} \left(1+4 \frac{\ln n}{n}\right),
	\qquad \forall n\geq 10.
\end{equation*}
Using \eqref{eqn:beta_estimate} we conclude
\begin{equation}\label{eqn:beta_estim_ok}
	1 + 8 \frac{\ln n}{n} 
	\leq\frac{2n^{\frac{2}{n-1}} \beta(n,n)}{\del_{n-1}}
	\leq 1+1000\frac{\ln n}{n}, 
	\qquad \forall n\geq 10,
\end{equation}
which yields \eqref{eqn:asymtotics_random_weighted}. Now \eqref{eqn:random_approx_dual} follows similarly from
\begin{equation}\label{eqn:random_approx_dual_estim}
	1+3\frac{\ln n}{n} \leq \lim_{N\to\infty} \frac{\E \widetilde{\Delta}_{j}(K,P_N^{\widetilde{\psi}_j})}{\widetilde{\Delta}_{j}(K,\cP_N^i(K))} 
	= \frac{2\beta(n,n)}{\del_{n-1}} \leq 1+200 \frac{\ln n}{n}, \qquad \forall n\geq 10.
\end{equation}

\paragraph{Estimates for $\div_{n-1}$ which yield \eqref{eqn:del_div} and Corollary \ref{cor:AffentrangerRemark}.}
By \cite[Thm.\ 4]{HoehnerKur:2018}, there are absolute constants $C_1,C_2$ such that $C_2 > C_1 > 0$ and
\begin{equation*}
	\div_{n-1} \geq \frac{n}{2\pi e} \left(1+\frac{\ln n}{n} - \frac{C_1}{n}\right), \qquad \forall n\geq C_2.
\end{equation*}
Hence, by \eqref{eqn:del_div_ineq} and \eqref{eqn:del_estim} we derive
\begin{equation}\label{eqn:div_del_estim}
	1\leq \frac{\del_{n-1}}{\div_{n-1}} \leq 1 + (25 + C_3) \frac{\ln n}{n}, \qquad \forall n\geq C_2,
\end{equation}
where $C_3:=\frac{C_1C_2}{C_2-C_1} >0$. Thus, to prove Corollary \ref{cor:AffentrangerRemark} we use \eqref{eqn:asymptotic_bounds}, \eqref{eqn:affentrangerball1}, \eqref{eqn:partial_Dn} and \eqref{eqn:alpha_estimate} to obtain
\begin{equation}\label{eqn:random_approx_estim}
	1\leq \limsup_{N\to\infty} \frac{\E \Delta_j(D_n, P_N)}{\Delta_j(D_n,\cP_N^i)} 
	\leq \frac{\alpha(n,j)}{\div_{n-1}|\partial D_n|^{\frac{2}{n-1}}} \leq 1+(120+C_3) \frac{\ln n}{n}, \qquad \forall n\geq C_2.
\end{equation}
This also yields \eqref{eqn:asymtotics_random_weighted_circum} since
\begin{equation}\label{eqn:asymtotics_random_weighted_circum_estim}
	1\leq \lim_{N\to\infty} \frac{\E \widetilde{\Delta}_j(K, P_{(N)}^{\widetilde{\phi}_j}\cap L)}{\widetilde{\Delta}_j(K,\cP_{(N)}^o(K))} 
	= \frac{n^{\frac{2}{n-1}}\alpha(n,1)}{\div_{n-1} |\partial D_n|^{\frac{2}{n-1}}} 
	\leq 1+ (180+2C_3) \frac{\ln n}{n}, \qquad \forall n\geq C_2. 
\end{equation}


\section{\texorpdfstring{The relationship between $\Delta_1$ and $\delta_1$}{Intrinsic width approximation}}\label{sec:width_rel}

The intrinsic width $V_1(K) = V_1(D_n)\int_{\Sp} h_K(u)\, d\sigma(u)$ is  related to the $L^1$ metric $\delta_1(K,L) = \int_{\Sp} |h_K(u)-h_L(u)|\, d\sigma(u)$ (see, e.g.\ \cite{Florian:1989}). The intrinsic width also defines the intrinsic width deviation $\Delta_1(K,L) = V_1(K)+V_1(L) - 2V_1(K\cap L)$, which is not a metric as we saw in Appendix \ref{sec:not_a_metric}.

\begin{theorem}\label{thm:delta_1}
	Let $K,L\in\cK(\R^n)$. Then
	\begin{equation*}
		\Delta_1(K,L) \geq V_1(D_n) \delta_1(K,L)
	\end{equation*}
	with equality if and only if $K\cup L$ is convex. In particular, $\Delta_1(K,L) = V_1(D_n) \delta_1(K,L)$ if $K\subset L$.
\end{theorem}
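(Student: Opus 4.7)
I would pass to support functions via Kubota's formula $V_1(K) = V_1(D_n)\int_{\Sp} h_K\,d\sigma$. Combined with the elementary identity $h_K + h_L - 2\min(h_K, h_L) = |h_K - h_L|$ and the pointwise bound $h_{K\cap L}\leq \min(h_K, h_L)$ (a consequence of $K\cap L\subseteq K, L$), a direct computation gives
\begin{equation*}
\Delta_1(K,L) - V_1(D_n)\,\delta_1(K,L)
= 2V_1(D_n)\int_{\Sp}\bigl[\min(h_K(u),h_L(u))-h_{K\cap L}(u)\bigr]\,d\sigma(u) \geq 0,
\end{equation*}
which proves the inequality. Since the integrand is continuous and nonnegative on $\Sp$, equality holds if and only if $h_{K\cap L}(u) = \min(h_K(u), h_L(u))$ for every $u\in\Sp$. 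The problem is thereby reduced to showing that this pointwise support-function identity is equivalent to the convexity of $K\cup L$.

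For the direction ``$K\cup L$ convex $\Rightarrow h_{K\cap L}=\min(h_K,h_L)$'', fix $u\in\Sp$, assume WLOG $h_K(u)\leq h_L(u)$, and pick $x\in K$, $y\in L$ with $\langle x,u\rangle = h_K(u)$ and $\langle y,u\rangle = h_L(u)$. By the assumed convexity, $[x,y]\subseteq K\cup L$, so the closed sets $A:=\{t\in[0,1]:(1-t)x+ty\in K\}$ and $B:=\{t\in[0,1]:(1-t)x+ty\in L\}$ satisfy $A\cup B = [0,1]$, and by connectedness of $[0,1]$ they share a point $t_0$. The corresponding $z=(1-t_0)x+t_0 y$ then lies in $K\cap L$ and satisfies $\langle z,u\rangle = (1-t_0)h_K(u)+t_0 h_L(u)\in[h_K(u), h_L(u)]$; since $z\in K$ forces $\langle z,u\rangle\leq h_K(u)$, we must have $\langle z, u\rangle = h_K(u) = \min(h_K,h_L)(u)$. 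Combined with the trivial upper bound $h_{K\cap L}(u)\leq \min(h_K,h_L)(u)$ this gives equality.

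For the converse, combining the hypothesis $h_{K\cap L}=\min(h_K,h_L)$ with the universally valid identity $h_{\conv(K\cup L)}=\max(h_K,h_L)$ and the elementary $a+b=\max(a,b)+\min(a,b)$ yields the support-function identity $h_K+h_L=h_{\conv(K\cup L)}+h_{K\cap L}$, equivalently the Minkowski-sum identity $K+L=\conv(K\cup L)+(K\cap L)$. The classical characterization of the convexity of a union of two intersecting convex bodies in terms of such a Minkowski-sum identity (see, e.g., the discussion in Schneider \cite{Schneider:2014}) then implies that $K\cup L$ is convex. The main obstacle is precisely this last step: although one can attempt a self-contained proof by contradiction, choosing $z=(1-t_0)x+t_0 y\notin K\cup L$ with $x\in K$, $y\in L$, $t_0\in(0,1)$ and using the outward normal to $K$ at the point of $K$ nearest to $z$ as the witness direction, the ensuing case analysis at the supporting hyperplane of $K$ meeting $L$ is delicate, so invoking the Minkowski-sum characterization is the cleanest route. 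Finally, if $K\subset L$ then $K\cup L = L$ is convex, so the equality case covers this special case as claimed.
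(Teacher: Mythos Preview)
Your derivation of the inequality and the reduction of the equality case to the pointwise identity $h_{K\cap L}=\min(h_K,h_L)$ are correct and coincide with the paper's argument. Your forward implication ``$K\cup L$ convex $\Rightarrow h_{K\cap L}=\min(h_K,h_L)$'' via the connectedness argument on the segment $[x,y]$ is correct and in fact more elementary than the paper's route, which simply invokes the valuation identity $V_1(K)+V_1(L)=V_1(K\cup L)+V_1(K\cap L)$.

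The gap is in the converse. Passing from $h_{K\cap L}=\min(h_K,h_L)$ to $K+L=\conv(K\cup L)+(K\cap L)$ is just the support-function tautology $h_K+h_L=\max(h_K,h_L)+\min(h_K,h_L)$ rewritten; it carries no new information. The ``classical characterization'' in Schneider is the identity $K+L=(K\cup L)+(K\cap L)$, with $K\cup L$ itself rather than its convex hull on the right, and that identity is what is \emph{equivalent} to convexity of $K\cup L$. Your identity with $\conv(K\cup L)$ does not reduce to it without already knowing $K\cup L$ is convex, and cancellation of $K\cap L$ only gives the trivial inclusion $K\cup L\subset\conv(K\cup L)$. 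So the citation does not close the argument; this is exactly the implication the paper has to prove by hand.

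The paper's self-contained proof of the converse runs as follows: assume $h_{K\cap L}=\min(h_K,h_L)$ but pick $z\in\conv(K\cup L)\setminus(K\cup L)$. Separating $z$ from $K$ gives $u_1$ with $h_K(u_1)<z\cdot u_1\le h_L(u_1)$, and symmetrically a direction $u_2$ with the roles of $K,L$ reversed. One rules out $u_1=-u_2$ (it would force $K\cap L=\varnothing$), then on the short geodesic arc from $u_1$ to $u_2$ the intermediate value theorem yields $u_3=\alpha u_1+\beta u_2$ (with $\alpha,\beta>0$) where $h_K(u_3)=h_L(u_3)$. Subadditivity of $h_{K\cap L}$ then gives
\[
h_{K\cap L}(u_3)\le \alpha h_{K\cap L}(u_1)+\beta h_{K\cap L}(u_2) < z\cdot u_3 \le \min(h_K(u_3),h_L(u_3)),
\]
contradicting the hypothesis at $u_3$. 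This is precisely the ``delicate'' direct argument you allude to, and it is what is actually needed; your outlined alternative using the outward normal at the nearest point of $K$ would likely also work, but you have not carried it out.
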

\begin{proof}
	Since $|a-b|+a+b=2\max\{a,b\}$ and $\max\{h_K,h_L\} = h_{\conv(K\cup L)}$, we derive
	\begin{align*}
		\delta_1(K,L) 
		&= \int_{\Sp} |h_K(u)-h_L(u)|\, d\sigma(u) \\
		&= \int_{\Sp} \left(2\max\{h_K(u),h_L(u)\} - h_K(u) - h_L(u)\right)\, d\sigma(u)\\
		&= \int_{\Sp}  2h_{\conv(K\cup L)}(u) \, d\sigma(u) - \int_{\Sp} h_K(u)\, d\sigma(u) - \int_{\Sp} h_L(u)\, d\sigma(u)\\
		&= \frac{1}{V_1(D_n)} \bigg(2V_1(\conv(K\cup L)) - V_1(K) - V_1(L)\bigg).
	\end{align*}
	We also have that
	\begin{equation*}
		\frac{V_1(K)+V_1(L)}{V_1(D_n)} 
		= \!\int_{\Sp}\! (h_K(u)+h_L(u))\, d\sigma(u) 
		= \!\int_{\Sp}\!(\max\{h_K(u), h_L(u)\} + \min\{h_K(u),h_L(u)\}) \, d\sigma(u).
	\end{equation*}
	Now since $\min\{h_K,h_L\} \geq h_{K\cap L}$, we derive $V_1(\conv(K\cup L)) \leq V_1(K)+V_1(L) - V_1(K\cap L)$. Therefore,
	\begin{equation*}
		V_1(D_n) \delta_1(K,L) \leq V_1(K) + V_1(K) - 2V_1(K\cap L) = \Delta_1(K,L)
	\end{equation*}
	with equality if and only if $V_1(K)+V_1(L) = V_1(\conv(K\cup L)) + V_1(K\cap L)$, or equivalently, if and only if $\min\{h_K,h_L\} = h_{K\cap L}$.
	If $K\cup L$ is convex, then by the valuation property of $V_1$ we have $V_1(K)+V_1(L) = V_1(K\cap L) + V_1(K\cup L)$.
	
	We are done once we show that $V_1(K)+V_1(L) = V_1(\conv(K\cup L)) + V_1(K\cap L)$ implies that $K\cup L$ is convex.
	Assume the opposite. Then $h_{K\cap L}(u) = \min\{h_K(u),h_L(u)\}$ for all $u\in \Sp$ and there exists a point $z\in \conv(K\cup L) \setminus (K\cup L)$.
	Since $z \not\in K$, there exists $u_1\in \Sp$ such that $ z\cdot u_1 > h_K(u_1)$. Since $z\in \conv(K\cup L)$, we also have $z\cdot u_1 \leq \max\{h_K(u_1),h_L(u_1)\}$. Hence,
	\begin{equation*}
		h_{K\cap L}(u_1) = h_K(u_1) < z\cdot u_1 \leq h_L(u_1).
	\end{equation*}
	Analogously, there exists $u_2 \in \Sp$ such that
	\begin{equation*}
		h_{K\cap L}(u_2) = h_L(u_2) < z\cdot u_2 \leq h_K(u_2).
	\end{equation*}
	Observe that $u_1\neq u_2$.
	If $u_1=-u_2$, then $h_K(u_1) < -h_L(-u_1)$, or equivalently,
	\begin{equation*}
		\max_{x\in K} x\cdot u_1 < \min_{y\in L}  y\cdot u_1.
	\end{equation*}
	Hence $K$ and $L$ can be strictly separated by a hyperplane with normal direction $u_1$, that is, $K\cap L =\varnothing$, which is a contradiction to $h_{K\cap L} = \min\{h_K,h_L\}$.
	
	Thus we may assume that $u_1\neq u_2$ and $u_1 \neq -u_2$, i.e., there is a unique minimizing geodesic arc between $u_1$ and $u_2$ on $\Sp$. By the continuity of $h_K(u)-h_L(u)$, there exists $u_3$ on this arc such that $h_K(u_3)=h_L(u_3)$. We may write $u_3=\alpha u_1+\beta u_2$ for some $\alpha,\beta>0$.
	By the subadditivity property of support functions, we conclude
	\begin{equation*}
		h_{K\cap L}(u_3) 
		\leq \alpha h_{K\cap L}(u_1)+\beta h_{K\cap L}(u_2) 
		<  z \cdot u_3  \leq \max\{h_K(u_3),h_L(u_3)\} = \min\{h_K(u_3),h_L(u_3)\},
	\end{equation*}
	which is also a contradiction to $h_{K\cap L} = \min\{h_K,h_L\}$. Hence $K\cup L$ is convex.
\end{proof}

\begin{remark}
    If $K,L\in\cK(\R^n)$ are such that $rD_n\subset K\cap L$ and $K\cup L\subset RD_n$ for some $r,R>0$, then by Theorem \ref{thm:delta_1} and $h_{K\cap L} \leq \min\{h_K,h_L\} \leq (R/r) h_{K\cap L}$, we derive
    \begin{equation*}
        V_1(D_n)\delta_1(K,L) \leq \Delta_1(K,L) \leq \frac{R}{r} V_1(D_n)\delta_1(K,L),
    \end{equation*}
In particular, this yields that approximations with respect to $\Delta_1$ and $\delta_1$ are of the same order.
\end{remark}

\begin{figure}[t]
	\centering
	\begin{tikzpicture}
        \clip (-8,-4.5) rectangle (8,3.5);
		\node at (0,0) {\includegraphics[width=0.8\textwidth]{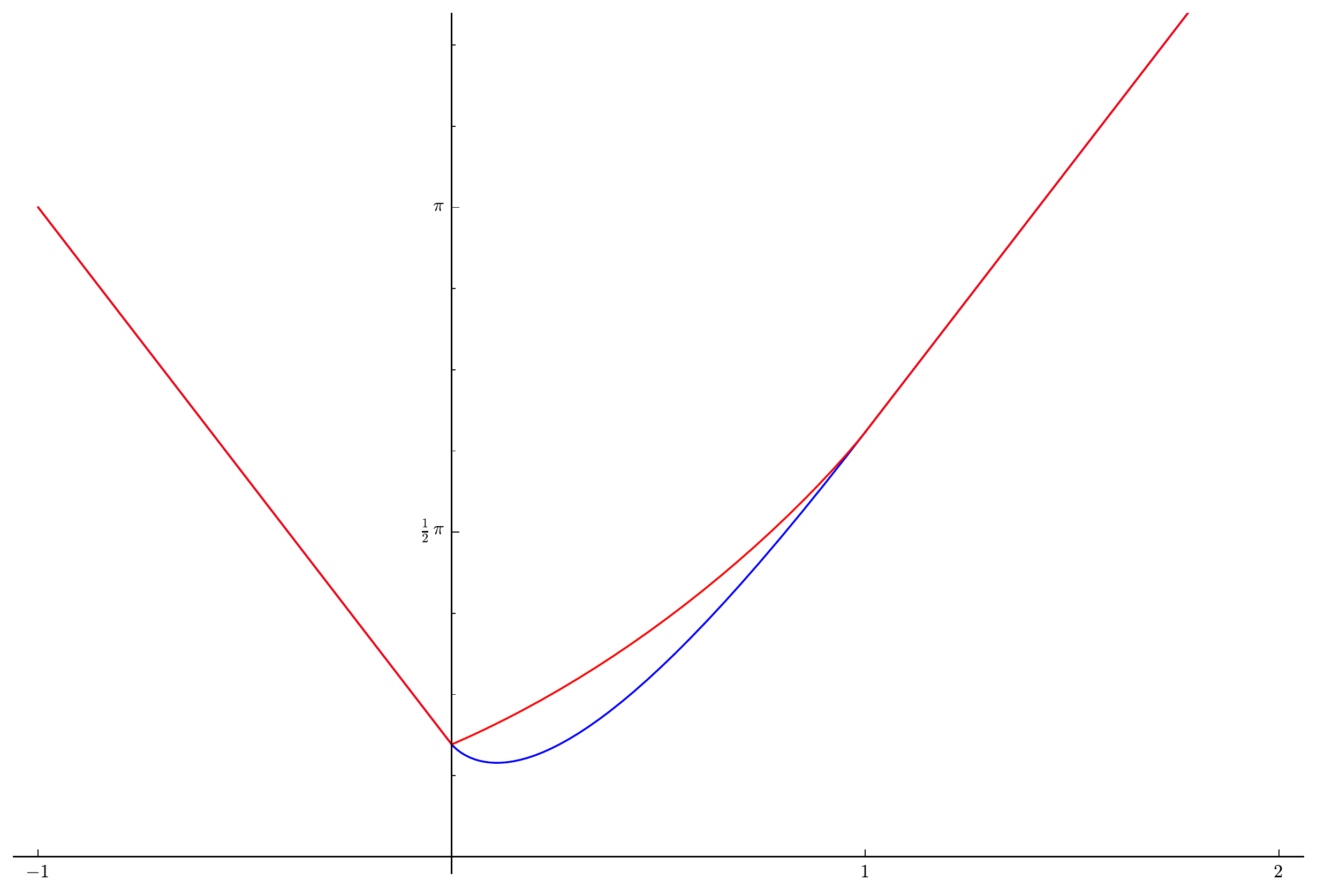}};
		\node[red, rotate=35] at  (-0.5,-1.8) {\small $\Delta_1(D_2,T(h))$};
		\node[blue] at (0,-3.5) {\small $\pi\delta_1(D_2,T(h))$};
		
		\begin{scope}[scale=1.7,xshift=2.2cm,yshift=-1cm]
			\def\h{0.4}
			\fill[blue, opacity=0.2] (0,0) circle(1);
			\draw (0,0) circle(1);
			\node at (45:1.2) {$D_2$};
			\fill[orange, opacity=0.2] (90:{1+\h}) -- (210:{1+\h}) -- (330:{1+\h}) -- cycle;
			\draw (90:{1+\h}) -- (210:{1+\h}) -- (330:{1+\h}) -- cycle;
			\node at (330:{1+\h+0.3}) {$T(h)$};
			\draw (90:0) -- (90:{1+\h}) node[midway,rotate=90, above] {\small $1+h$};
		\end{scope}

	\end{tikzpicture}
	\caption{Plot of the $L^1$ distance $\delta_1(D_2,T(h))$ and the first intrinsic volume deviation $\Delta_1(D_2,T(h))$ between the disk $D_2$ and a regular triangle $T(h)$ with circumradius $1+h$.}
	\label{fig:1}
\end{figure}

\begin{example}
	Consider the unit disk $D_2$ in $\R^2$. 
	For $h\in (-1, \infty)$ let $T(h)$ be a regular triangle centered at the origin with circumradius $1+h$, that is, $T(h)$ is inside of $D_2$ if $h<0$ and $D_2$ is inside of $T(h)$ if $h>1$. Thus, since $V_1(D_2)=\pi$ we derive
	\begin{equation*}
		\pi\delta_1(D_2,T(h)) = \Delta_1(D_2,T(h)) = \begin{cases} 
		                                                	\pi-\frac{3\sqrt{3}}{2}(1+h) & \text{if $h\in (-1,0]$},\\
		                                                	\frac{3\sqrt{3}}{2}(1+h)-\pi & \text{if $h\in [1, \infty)$}.
		                                                \end{cases}
	\end{equation*}
	For $h\in (0,1)$, we  calculate that
	\begin{align*}
		\pi\delta_1(D_2,T(h)) = -2\pi- \frac{3\sqrt{3}}{2}(1+h) + 6\sqrt{2h+h^2}+6\arcsin\left(\frac{1}{1+h}\right)
	\intertext{and}
		\Delta_1(D_2,T(h)) =\pi + \frac{3\sqrt{3}}{2}(1+h) - \sqrt{3}\sqrt{9-6h-3h^2} - 6\arccos\left(\frac{1+h+\sqrt{9-6h-3h^2}}{4}\right).
	\end{align*}
	See Figure \ref{fig:1} for a plot of the two functions. In particular,  the minimum of $\delta_1$ is achieved for some $h\in(0,1)$, i.e., $D_2$ and $T(h)$ are in a general position, whereas the minimum of $\Delta_1$ is achieved for $h=0$, that is, if the regular triangle is inscribed. 
	The latter also follows as a special case of a theorem by Eggleston \cite[Lem.\ 4]{Eggleston:1957}, who showed that the best-approximating polygon with respect to $\Delta_1$ is always inscribed. Note that in the plane $\R^2$, the first intrinsic volume deviation $\Delta_1$ is the perimeter deviation; see, e.g., \cite{Fodor:2019}.
\end{example}


\pagebreak
\small
\vspace{2mm}

\noindent {\sc Institute of Discrete Mathematics and Geometry, Vienna University of Technology, Wiedner Hauptstrasse 8--10, 1040 Vienna, Austria}

\noindent {\it E-mail address:} {\tt florian.besau@tuwien.ac.at}

\vspace{2mm}

\noindent {\sc Department of Mathematics \& Computer Science, Longwood University, 201 High St, Farmville, VA 23909}

\noindent {\it E-mail address:} {\tt hoehnersd@longwood.edu}

\vspace{2mm}

\noindent {\sc Massachusetts Institute of Technology, Computer Science \& Artificial Intelligence Laboratory, 32 Vassar St, Cambridge, MA 02139}

\noindent {\it E-mail address:} {\tt gilkur@mit.edu}

\end{document}